\documentclass[11pt]{amsart}

\usepackage{etex}
\reserveinserts{28}

\usepackage{geometry}
\geometry{a4paper}                   % ... or a4paper or a5paper or ...
\usepackage{listings} 
\usepackage{algorithmic,algorithm}
\usepackage{multirow}
\usepackage{enumerate}

\usepackage[pdftex]{graphicx}
\usepackage[pdftex]{color} % black,white,red,green,blue,cyan,magen ta,yellow
\usepackage[pdftex,colorlinks]{hyperref}
\usepackage{graphicx,pst-eps,epstopdf}
\usepackage{lscape}
\usepackage{indentfirst}
\usepackage{latexsym}
\usepackage{amsmath, amsfonts, amssymb,mathrsfs}
\usepackage{subfigure,pstricks,pst-node}
\usepackage{pst-eps,epstopdf}
\usepackage{verbatim}
\usepackage{tikz}
\usepackage{caption}
\usepackage{nicefrac}
\usepackage{slashbox}
\usepackage{float}
\usepackage{bm}

\usepackage{mathpazo}
\usepackage[mathpazo]{flexisym}
%\usepackage{breqn}

%%%%%

\hypersetup{
    bookmarks=true,         % show bookmarks bar?
    unicode=true,          % non-Latin characters in Acrobat?????s bookmarks
    pdftoolbar=true,        % show Acrobat?????s toolbar?
    pdfmenubar=true,        % show Acrobat?????s menu?
    pdffitwindow=true,      % page fit to window when opened
    pdftitle={},    % title
    pdfauthor={Xiaozhe Hu},     % author
    pdfsubject={},   % subject of the document
    pdfnewwindow=true,      % links in new window
    pdfkeywords={}, % list of keywords
    colorlinks=true,       % false: boxed links; true: colored links
    linkcolor=red,          % color of internal links
    citecolor=blue,        % color of links to bibliography
    filecolor=magenta,      % color of file links
    urlcolor=cyan           % color of external links
}

\geometry{margin=3.0cm}
\linespread{1.2}

%\openup 8pt

\newtheorem{theorem}{Theorem}[section]

\newtheorem{remark}{Remark}[section]
\newtheorem{lemma}{Lemma}[section]

\numberwithin{equation}{section} \numberwithin{table}{section}
\numberwithin{figure}{section}
\numberwithin{algorithm}{section}

\DeclareMathOperator*{\esssup}{ess\,sup}
\everymath{\displaystyle}

\begin{document}

\title{Robust Preconditioners for Incompressible MHD Models}

\author{Yicong Ma}
\address{Department of Mathematics,The Pennsylvania State University, University Park, PA 16802, USA}
\email{yxm147@psu.edu}
%\thanks{}

%    author two information
\author{Kaibo Hu}
\address{Beijing International Center for Mathematical Research , Peking University, Beijing 100871, P. R. China}
\email{hukaibo02@gmail.com}
%\thanks{}

%    author three information
\author{Xiaozhe Hu}
\address{Department of Mathematics, Tufts University, Medford, MA 02155, USA}
\email{xiaozhe.hu@tufts.edu}
%\thanks{}

%    author four information
\author{Jinchao Xu}
\address{Department of Mathematics,The Pennsylvania State University, University Park, PA 16802, USA}
\email{xu@math.psu.edu}
%\thanks{}

\date{}

\begin{abstract}
In this paper, we develop two classes of robust preconditioners for the structure-preserving discretization of the incompressible magnetohydrodynamics (MHD) system. By studying the well-posedness of the discrete system, we design block preconditioners for them and carry out rigorous analysis on their performance. We prove that such preconditioners are robust with respect to most physical and discretization parameters. In our proof, we improve the existing estimates of the block triangular preconditioners for saddle point problems by removing the scaling parameters, which are usually difficult to choose in practice. This new technique is not only applicable to the MHD system, but also to other problems. Moreover,  we prove that Krylov iterative methods with our preconditioners preserve the divergence-free condition exactly, which complements the structure-preserving discretization. Another feature is that we can directly generalize this technique to other discretizations of the MHD system. We also present preliminary numerical results to support the theoretical results and demonstrate the robustness of the proposed preconditioners.
\end{abstract}

\keywords{ incompressible MHD, robust preconditioners, field-of-values analysis}

\maketitle

%\tableofcontents

%\newpage

% !TEX root = mhd.tex
% This tex file contains all the content of the paper.

% introduction.
\section{Introduction}

The incompressible Magnetohydrodynamics (MHD) system models the interactions between electromagnetic fields and conducting fluids. It consists of the incompressible Navier-Stokes equation for the fluids and the (reduced) Maxwell's equation for the electro-magnetic fields.  MHD systems of different scales are used in different fields,
such as astrophysics, engineering related to liquid metal, controlled thermonuclear fusion.  There is a vast literature on the study of various aspects of MHD systems. In this work, we concentrate on the following incompressible MHD system in both $2$D and $3$D.  We assume that {\small $\Omega \subset \mathbb{R}^2$} or {\small $\Omega \subset \mathbb{R}^3$} is a simply connected bounded domain with a Lipchitz boundary. In $3$D case, the model is 
\begin{small}
\begin{align}
& \frac{\partial \bm{u}}{\partial t} 
+ ( \boldsymbol{u} \cdot \nabla) \boldsymbol{u}
- \frac{1}{Re} \Delta \boldsymbol{u}
- s \bm{j} \times \boldsymbol{B}
+ \nabla p
= \boldsymbol{f} , 
\label{eq:dimensionless1} \\
& \frac{\partial \bm{B}}{\partial t}
+ \nabla \times \boldsymbol{E} = \boldsymbol{0} , 
\label{eq:dimensionless2} \\
& \bm{j} - \frac{1}{Rm} \nabla \times \mu_{r}^{-1} \boldsymbol{B}
 = \boldsymbol{0} , 
 \label{eq:dimensionless3} \\
& \sigma_{r} ( \bm{E} + \bm{u} \times \bm{B} ) = \bm{j}, 
\label{eq:dimensionless4} \\
& \nabla \cdot \boldsymbol{u} = 0.
\label{eq:dimensionless5}
\end{align}
\end{small}
Here, {\small $\boldsymbol{u}$} is the velocity of fluid, {\small $p$} is the pressure, {\small $\boldsymbol{B}$} is the magnetic field, {\small $\boldsymbol{j}$} is the volume current density, and {\small $\boldsymbol{E}$} is the electric field.  The physical parameters are the fluid Reynolds number {\small $Re$}, the magnetic Reynolds number {\small $Rm$}, the coupling number $s$, the relative electrical conductivity {\small $\sigma_{r}$}, and the relative magnetic permeability {\small $\mu_{r}$}. The initial conditions for the fluid velocity, and the magnetic field are
\begin{small}
\begin{align*}
& \bm{u}( \bm{x}, 0 ) = \bm{u}_{0}(\bm{x}),  
\quad
\bm{B}( \bm{x}, 0 ) = \bm{B}_{0}(\bm{x}), 
\quad 
\forall \bm{x} \in \Omega.
\end{align*}
\end{small}
And the boundary conditions are
\begin{small}
\begin{align*}
& \bm{u} = 0,  
\quad
\bm{n} \cdot \bm{B} = 0, 
\quad
\bm{n} \times \bm{E} = \bm{0}, 
\quad
\forall \bm{x} \in \partial \Omega, 
\quad t > 0.
\end{align*}
\end{small}
The primary unknown physical variables in the model are {\small $\boldsymbol{u}$}, {\small $p$} and {\small $\boldsymbol{B}$}. These quantities, once known, uniquely determine {\small $\boldsymbol{E}$} and {\small $\boldsymbol{j}$}. To discretize this system, we follow the scheme proposed in our previous work \cite{Hu.K;Ma.Y;Xu.J.2014a}, which ensures {\small $\bm{B}$} is divergence-free exactly on the discrete level (Such property is referred to as structure-preserving in the following sections). Therefore, we solve {\small $\boldsymbol{u}$}, {\small $p$}, {\small $\boldsymbol{B}$}, and {\small $\boldsymbol{E}$} simultaneously. 

%There is also incompressible MHD model in $2D$:
In $2$D case, the MHD model \eqref{eq:dimensionless1}-\eqref{eq:dimensionless5} becomes: 
\begin{small}
\begin{align}
& \frac{\partial \boldsymbol{u}}{\partial t} 
+ ( \boldsymbol{u} \cdot \nabla) \boldsymbol{u}
- \frac{1}{Re} \Delta \boldsymbol{u}
- s j \times \bm{B}
+ \nabla p
= \boldsymbol{f} , 
\label{eq:dimensionless-2D1} \\
& \frac{\partial \boldsymbol{B}}{\partial t}
+ \mathrm{curl} E = \boldsymbol{0} , 
\label{eq:dimensionless-2D2} \\
& j - \frac{1}{Rm} \mathrm{rot} ~ \mu_{r}^{-1} \boldsymbol{B}
 = \boldsymbol{0} , 
 \label{eq:dimensionless-2D3} \\
& \sigma_{r} ( E + \bm{u} \times \bm{B} )  = j, 
\label{eq:dimensionless-2D4} \\
& \nabla \cdot \boldsymbol{u} = 0.
\label{eq:dimensionless-2D5}
\end{align}
\end{small}
Here, {\small $\mathrm{rot} \bm{u} = \frac{\partial u_{2} }{\partial x} - \frac{\partial u_{1}}{\partial y}$} for any vector {\small $\bm{u} = \left( u_{1}, u_{2} \right)^{T}$}, and {\small $\mathrm{curl} u = \left( \frac{\partial u}{\partial y}, - \frac{\partial u}{\partial x} \right)^{T}$} for any scalar {\small $u$}.

Note that we can directly apply the analysis of $3$D model to $2$D case because we can write velocity {\small $\bm{u} = ( u_{1}, u_{2} )^{T} \in \mathbb{R}^{2}$} as {\small $\bm{u} = ( u_{1}, u_{2}, 0 )^{T} \in \mathbb{R}^{3}$}, the magnetic field {\small $\bm{B}=(B_1, B_2)^T$} as {\small $\bm{B} = (B_1, B_2, 0)^T$}, and the electric field {\small $E$} as {\small $\bm{E} = (0, 0, E)^{T} \in \mathbb{R}^{3}$}. The cross product and derivative operators are all well-defined by rewriting those $2$D variables in the $3$D fashion. Therefore, we focus on the analysis in $3D$ case in the rest of the paper.  

For the MHD system, solving the linear systems obtained after linearization is usually the most challenging and time-consuming part in the overall simulation, which is due to the large-scale, multi-physical, and indefinite properties of the resulting linear systems.  In order to improve the efficiency of the numerical simulations, there have been a lot of studies on the development of efficient solvers for various MHD systems.  

% block preconditioners.
% Chacon: physics-based preconditioning.
Due to the block structure of the resulting linear systems, many block preconditioners have been developed in the literature for the MHD system. 
% Shadid.
Shadid and his collaborators have developed a series of robust and scalable Newton-Krylov solvers for the MHD system \cite{Shadid.J;Cyr.E;Pawlowski.R;Tuminaro.R;Chacon.L;Lin.P.2010a,Shadid.J;Pawlowski.R;Banks.J;Chacon.L;Lin.P;Tuminaro.R.2010a,Cyr.E;Shadid.J;Tuminaro.R;Pawlowski.R;Chacon.L.2013a,Phillips.E;Elman.H;Cyr.E;Shadid.J;Pawlowski.R.2014a}. In \cite{Shadid.J;Pawlowski.R;Banks.J;Chacon.L;Lin.P;Tuminaro.R.2010a}, they propose a robust, efficient, fully-coupled stabilized finite element formulation for resistive MHD, which enables both fully-implicit and direct-to-steady-state solutions. They investigate the performance of one-level Schwarz method and also a new fully coupled algebraic multilevel method in that paper. In \cite{Shadid.J;Cyr.E;Pawlowski.R;Tuminaro.R;Chacon.L;Lin.P.2010a,Cyr.E;Shadid.J;Tuminaro.R;Pawlowski.R;Chacon.L.2013a,Phillips.E;Elman.H;Cyr.E;Shadid.J;Pawlowski.R.2014a}, they explore a class of robust and scalable parallel preconditioners for Newton-Krylov solver based on the physical-based approximate block factorization (ABF) technique. They employ block factorization and approximate the resulting Schur complement recursively based on special techniques, for example, operator commutativity \cite{Elman.H;Howle.V;Shadid.J;Shuttleworth.R;Tuminaro.R.2006a}. Numerical experiments and benchmark tests demonstrate the efficiency and scalability of their preconditioners.
Chac\'{o}n and his collaborators also contribute to developing ``physics-based" block preconditioning strategies for fully-implicit Newton-Krylov solvers for MHD system \cite{Chacon.L;Knoll.D;Finn.J.2002a,Chacon.L;Knoll.D.2003a,Chacon.L.2008b,Chacon.L.2008a}. They use physical-based ABF approach to design preconditioners for the linearized MHD system. With algebraic techniques and recursive approximations of the Schur complements,  they successfully convert the complicated problems into several Poisson-like equations and design efficient ABF preconditioners for the linearized MHD system. The implementation of ABF preconditioners consists of solving a sequence of Poisson-like equations, for which multigrid (MG) methods, especially algebraic multigrid (AMG) methods, can be effectively applied.  Numerical experiments and benchmark tests demonstrate the efficiency and scalability of ABF preconditioners. Moreover, T\'{o}th et. al \cite{Toth.G;Keppens.R;Botchev.M.1998a,Keppens.R;Toth.G;Botchev.M.1999a} use a block incomplete LU (ILU) factorization to precondition the MHD system. And Badia et. al \cite{Badia.S;Martin.A;Planas.R.2014a} propose a recursive version block ILU preconditioner recently.

% Additive Schwarz.  Efstathiou.E;Gander.M.2003a Tuminaro.R;Tong.C;Shadid.J;Devine.K.2002a
In addition to the block preconditioners, there are also many works on other preconditioning strategies for the MHD system, such as the additive Schwarz methods \cite{Quarteroni.A;Valli.A.1999a,Cai.X;Sarkis.M.1999a,Ovtchinnikov.S;Dobrain.F;Cai.X;Keyes.D.2007a,Reynolds.D;Samtaney.R;Tiedeman.H.2012a}, 
% Operator splitting.
Operator splitting method \cite{Reynolds.D;Samtaney.R;Woodward.C.2010a,Reynolds.D;Samtaney.R;Tiedeman.H.2012a}.
% Reynolds et. al propose a preconditioner based on splitting the operator by coordinate directions, which is especially attractive in the structured grid calculations. 

In this paper, we develop robust block preconditioners especially for the linearized system arising from the structure-preserving discretizations \cite{Hu.K;Ma.Y;Xu.J.2014a}. We precondition it by converting coupled MHD systems into subsystems for which effective preconditioners exist. Different from the aforementioned preconditioners that have been studied mostly from algebraic point of view, our preconditioners are motivated from the perspective of functional and PDE analysis following a framework summarized by Mardal and Winther in \cite{Mardal.K;Winther.R.2011a}. In essence, we study the mapping property of the linearized operator between appropriate Sobol\'{e}v spaces equipped with carefully chosen norms. So we can derive robust block diagonal preconditioners based on proper norms straightforwardly. Such block diagonal preconditioners are often known as norm-equivalent preconditioners and use them in combination with minimal residual (MINRES) method. Moreover, we can design block triangular preconditioners, and theoretically prove that they are Field-of-values- (FOV-) equivalent preconditioners based on the mapping properties \cite{Loghin.D;Wathen.A.2004a}. And we use them in combination with general minimal residual (GMRES) method.

In the analysis of FOV-equivalent preconditioners for saddle point problems, we improve the estimates by Loghin and Wathen in \cite{Loghin.D;Wathen.A.2004a} by removing scaling parameters in front of the diagonal blocks. It is observed that such scaling parameters are difficult to choose and unnecessary in practice. By choosing appropriate norms in the analysis, we are able to get rid of these scaling parameters, which is consistent with the practical implementations and observations.  While this new technique is originally motivated for FOV-equivalent preconditioners for the MHD systems, it is expected to be applicable to other saddle point type problems. 
  
One special feature of our work is that we pay special attention to the structure-preserving property. We design our preconditioners in such a way that the resulting preconditioned Krylov iterative methods inherit this property even if the linear system is solved inexactly.  This feature makes our preconditioner structure-preserving and suitable for accurate and efficient MHD simulations. 

The efficiency of our preconditioners depends on how the several relevant subsystems are solved. The subsystem for velocity field is Poisson-like, for which multigrid methods are effective preconditioners. The subsystem for pressure is well-conditioned and hence it can be easily solved. For the electric and magnetic fields, we need to solve subsystems involving {\small $\mathrm{curl} \ \mathrm{curl}$} and {\small $\mathrm{grad} \ \mathrm{div}$} operators. We adopt the HX-preconditioner \cite{Hiptmair.R;Xu.J.2007a}, which is developed based on the auxiliary space preconditioning \cite{Xu.J.1996a}. Taking advantage of those efficient sub-problem solvers, we develop practical and scalable preconditioners for the structure-preserving discretization of the MHD system.    

The rest of the paper is organized as follows. We revisit the structure-preserving finite element discretization introduced in \cite{Hu.K;Ma.Y;Xu.J.2014a} in \S \ref{sec:magnetohydrodynamics_model} and \S\ref{sec:fem_discretization}. We carry out the analysis using different weighted norms to ensure the robustness of preconditioners with respect to the physical and discretization parameters. Then we propose and analyze these preconditioners in \S \ref{sec:uniform_pc} and discuss their generalizations to other discretization schemes in \S \ref{sec:general}.  Finally, we present results of numerical experiments in \S \ref{sec:numeric_experiments} to demonstrate the robustness of these new preconditioners. 

% model.
\section{Magnetohydrodynamics model}\label{sec:magnetohydrodynamics_model}
Following \cite{Hu.K;Ma.Y;Xu.J.2014a}, we use the following set of notation.  First, {\small $(\cdot, \cdot)$} and {\small$\Vert \cdot \Vert$} denotes {\small $L^{2}$} inner product and {\small $L^2$} norm
\begin{small}
$$
(u,v)=\int_{\Omega}u\cdot v \mathrm{d}x,
\quad
\|u\| =\left(\int_{\Omega}|u|^2 \mathrm{d}x\right)^{1/2} = \sqrt{(u,u)}. 
$$
\end{small}
With a slight abuse of notation, we use {\small $L^2(\Omega)$} to denote
both the scalar and vector {\small $L^2$} space.  Given a linear operator {\small $D$}, we define
\begin{small}
$$
H(D,\Omega) = \left\{v\in L^2(\Omega), ~ Dv\in L^2(\Omega) \right\} ,
$$
\end{small}
and 
\begin{small}
$$
H_0(D,\Omega) = \left\{v\in H(D, \Omega), ~ t_{D}v=0 \mbox{ on } \partial\Omega \right\}. 
$$
\end{small}
Here, {\small $t_{D}$} is the trace operator defined by
\begin{small}
$$
t_{D}v=
\left\{
  \begin{array}{cc}
    v, & D=\mathrm{grad},\\
    v\times n, & D=\mathrm{curl},\\
    v\cdot n, & D=\mathrm{div},
  \end{array}
\right.
$$
\end{small}
where $n$ is the outer normal direction of {\small $\partial \Omega$}. We note that {\small $L^2(\Omega)$} can be viewed as {\small $H(id,\Omega)$} where {\small $id$} denotes the identity operator and we
often use the following notation:
\begin{small}
$$
L^2_0(\Omega) = \left\{v\in L^2(\Omega), ~ \int_\Omega v=0 \right\}.
$$
\end{small}
When {\small $D=\mathrm{grad}$}, we often use the notation:
\begin{small}
$$
H^1(\Omega)=H(\mathrm{grad}, \Omega), \quad
H^1_0(\Omega)=H_0(\mathrm{grad}, \Omega).
$$
\end{small}
We also use the space {\small $L^p$} and {\small $H^{-1}$} with their canonical norms
\begin{small}
$$
\|v\|_{0,p}= \left( \int_\Omega|v|^p \right)^{1/p},
\quad
\|v\|_{0,\infty}= \esssup_{x \in \Omega} \lvert v(x) \rvert ,
\quad \|v\|_{H^{-1}}=\sup_{\phi\in
  H^1_0(\Omega)}\frac{( v, \phi )}{\|\nabla\phi\|},
$$
\end{small}
% where {\small $\langle \cdot, \cdot \rangle$} is the duality pair between the functional space and its dual space.
As we will see later, it is convenient to introduce the following spaces
\begin{small}
\begin{align*} %\label{space}
& \boldsymbol{X} = H_{0}^{1}(\Omega)^{3}\times {H}_{0}(\mathrm{div}; \Omega)\times
{H}_{0}(\mathrm{curl}; \Omega), ~ Q=L^2_0(\Omega), \\
& \boldsymbol{V} = H_{0}^{1}(\Omega)^{3}, ~
\boldsymbol{V}^{d} = H_{0}(\mathrm{div}; \Omega) \mbox{ and }
\boldsymbol{V}^{c} = H_{0}(\mathrm{curl}; \Omega), \\
& \bm{V}^{d,0} = H_0(\mathrm{div}0, \Omega)=\{ \bm{C} \in \bm{V}^d,~ \nabla \cdot \bm{C} = 0 \}.
\end{align*}
\end{small}
We use {\small $\boldsymbol{W}^{\ast}$} ({\small $\boldsymbol{W} = \boldsymbol{V}$}, {\small $\boldsymbol{V}^{d}$} or {\small $\boldsymbol{V}^{c}$}) to denote the dual space of {\small $\boldsymbol{W}$}, and {\small $\bm{W}_{h}$} the corresponding finite element space of {\small $\bm{W}$}.  Moreover, we assume that both {\small $\mu_{r}$} and {\small $\sigma_{r}$} are positive continuous functions only depending on {\small $x \in \Omega$}, which induce weighted {\small $L^{2}$}-norms
\begin{small}
\begin{align*}
& \Vert \boldsymbol{x} \Vert_{\sigma_{r}}^{2} 
= ( \sigma_{r} \boldsymbol{x}, \boldsymbol{x} ), ~
 \Vert \boldsymbol{x} \Vert_{\mu_{r}^{-1}}^{2}
= ( \mu_{r}^{-1} \boldsymbol{x}, \boldsymbol{x} ).
\end{align*}
\end{small}
For the sake of convenience, we also define a tri-linear form
\begin{small}
\begin{align*}
& d( \bm{w}, \bm{u}, \bm{v} ) = 
\frac{1}{2} \left[ (\boldsymbol{w}\cdot \nabla \boldsymbol{u}, \boldsymbol{v})
  - ( \boldsymbol{w}\cdot \nabla 
\boldsymbol{v}, \boldsymbol{u}) \right].
\end{align*}
\end{small}
%
%\vskip-10pt
Based on these notation, we consider the variational formulation for the incompressible MHD system \eqref{eq:dimensionless1}-\eqref{eq:dimensionless5}: Find {\small $(\boldsymbol{u}, \boldsymbol{B}, \boldsymbol{E})\in \boldsymbol{X}$} and {\small $ p\in Q$} such that for any {\small $(\boldsymbol{v}, \boldsymbol{C}, \boldsymbol{F})\in \boldsymbol{X} $} and {\small $q\in Q$},
\begin{small}
\begin{align}
\begin{cases}
&  \left( \frac{\partial \boldsymbol{u}}{\partial t},\boldsymbol{v} \right) 
 + d( \bm{u}, \bm{u}, \bm{v} )
+ k^{-1} ( \nabla \cdot \boldsymbol{u}, \nabla \cdot \boldsymbol{v} ) 
+ \frac{1}{Re} (\nabla \boldsymbol{u}, \nabla \boldsymbol{v}) \\
& \qquad
- s ( \sigma_{r} \boldsymbol{E}\times \boldsymbol{B},\boldsymbol{v} ) 
+ s ( \sigma_{r} \boldsymbol{u} \times \boldsymbol{B}, \boldsymbol{v} \times \boldsymbol{B} ) 
- (p,\nabla\cdot \boldsymbol{v}) 
 = (\boldsymbol{f},\boldsymbol{v}), \\
& \left( \mu_{r}^{-1} \frac{\partial \boldsymbol{B}}{\partial t}, \boldsymbol{C} \right) 
 + ( \mu_{r}^{-1} \nabla\times \boldsymbol{E}, \boldsymbol{C}) = 0,   \\
& ( \sigma_{r} \boldsymbol{E},\boldsymbol{F}) 
 + ( \sigma_{r} \boldsymbol{u} \times \boldsymbol{B}, \boldsymbol{F} )
- \frac{1}{Rm} (\mu_{r}^{-1} \boldsymbol{B}, \nabla\times \boldsymbol{F}) 
 = 0,  \\
& (\nabla\cdot \boldsymbol{u}, q)  =  0.
\end{cases}\label{eq:variational_form}
\end{align}
\end{small}

\begin{remark}
Note that the special treatment of the nonlinear convection term in \eqref{eq:variational_form} is based on the following identity, i.e. if {\small $\nabla\cdot\boldsymbol{u}=0$} and {\small $\bm{u} = 0$} on {\small $\partial \Omega$}, 
\begin{small}
\begin{equation}\label{modified-convection}
( \boldsymbol{u}\cdot \nabla \boldsymbol{u}, \boldsymbol{v})
= d( \bm{u}, \bm{u}, \bm{v} ) ,
\end{equation}
\end{small}
This is a classical stabilization technique, c.f. \cite{Teman.R.1977a}.
\end{remark}

% fem
\section{Finite element discretization for the MHD system}\label{sec:fem_discretization}
In this section, we revisit the structure-preserving discretization of the incompressible MHD system \cite{Hu.K;Ma.Y;Xu.J.2014a}. For the temporal discretization, we adopt the backward Euler scheme. And similar spatial discretization is also applicable to other temporal discretization schemes, such as Crank-Nicolson or backward differentiation formula (BDF). We first introduce the full discretizations and then revisit the well-posedness of the linearized problem with different weighted norms from \cite{Hu.K;Ma.Y;Xu.J.2014a}.

\subsection{Full discretization scheme} 
Before going into the details of discretizations, we first introduce the finite element spaces for the velocity {\small $\boldsymbol{u}$}, the pressure {\small $p$}, the magnetic field {\small $\boldsymbol{B}$}, and the electric field {\small $\boldsymbol{E}$}, respectively.  

For the velocity and pressure, we choose a standard stable Stokes pair such that {\small $\boldsymbol{V}_h \subset H_{0}^{1}(\Omega)^3$}, {\small $Q_h \subset L_{0}^{2}(\Omega)$}, and it satisfies the well-known inf-sup condition:
\begin{small}
\begin{equation}
  \label{inf-sup}
\inf_{q_h\in  Q_h}\sup_{\boldsymbol{v}_h\in \boldsymbol{V}_h}\frac{(\nabla\cdot \boldsymbol{v}_h,q_h)}{\|\nabla \boldsymbol{v}_h\|\;\|q_h\|}\ge \beta>0,
\end{equation}
\end{small}
where the positive constant {\small $\beta$} is independent of mesh size {\small $h$}.  Many stable Stokes pairs are available, such as Taylor-Hood element \cite{Boffi.D;Brezzi.F;Fortin.M.2013a}.  For the magnetic field, we use Raviart-Thomas elements and denote the finite element space as  {\small $\boldsymbol{V}_h^d \subset H_0(\mathrm{div}; \Omega)$}. And we employ N\'{e}d\'{e}lec edge elements to discretize the electric field and denote the finite element space as {\small $\boldsymbol{V}^c_{h} \subset H_0(\mathrm{curl}; \Omega)$}. Therefore, we can define the finite element space of {\small $\bm{X}$} 
\begin{small}
\begin{equation*}
\boldsymbol{X}_h  =  \boldsymbol{V}_h \times \boldsymbol{V}_h^d \times \boldsymbol{V}_h^c.
\end{equation*}
\end{small}
Based on the above finite element spaces, we have the following full discretization scheme based on the Picard linearization. We discretize the convection term explicitly because it leads to a symmetric linear system, which facilitates the analysis of solvers. The solvers proposed in this paper also work for other discretizations in \cite{Hu.K;Ma.Y;Xu.J.2014a}.

\textbf{Symmetric Picard linearization.} Find {\small $
(\boldsymbol{u}_h^{n},\boldsymbol{B}_h^{n},\boldsymbol{E}_h^{n},p_h^{n})\in \boldsymbol{X}_h \times Q_h
$} such that for any {\small $(\boldsymbol{v}_h,\boldsymbol{C}_h,\boldsymbol{F}_h, q_h)\in \boldsymbol{X}_h \times Q_h$},
\begin{small}
\begin{align}
\begin{cases}
& k^{-1} \left( \boldsymbol{u}_h^{n}-\boldsymbol{u}_h^{n-1}, \boldsymbol{v}_{h} \right) 
+  d( \bm{u}_h^{n-1},  \bm{u}_h^{n-1},  \bm{v}_h ) 
+ k^{-1} ( \nabla \cdot \bm{u}_{h}, \nabla \cdot \bm{v}_{h} ) \\
& \qquad
+ {1 \over Re} \left(\nabla \boldsymbol{u}_h^{n}, \nabla \boldsymbol{v}_h \right) 
 - s  \left(\boldsymbol{j}^{n}_{h,n-1}\times \boldsymbol{B}_h^{n-1},\boldsymbol{v}_h \right) 
 - \left( p_h^{n},\nabla\cdot \boldsymbol{v}_h \right) 
= \left( \boldsymbol{f}_h^{n},\boldsymbol{v}_h \right),   \\
& - k^{-1} \alpha \left( \mu_{r}^{-1} \left( \boldsymbol{B}_h^{n} - \boldsymbol{B}_h^{n-1} \right), 
\boldsymbol{C}_{h} \right) 
- \alpha \left( \mu_{r}^{-1} \nabla \times \boldsymbol{E}_h^{n}, \boldsymbol{C}_{h} \right) = 0,  \\
& s \left( \boldsymbol{j}^{n}_{h,n-1},\boldsymbol{F}_h \right) 
- \alpha \left( \mu_{r}^{-1} \boldsymbol{B}_h^{n}, \nabla\times \boldsymbol{F}_h \right) =0, \\
& \left( \nabla\cdot \boldsymbol{u}_h^{n}, q_h \right) =0.
\end{cases}\label{eq:fully_picard1}
\end{align}
\end{small}
where {\small $\boldsymbol{j}^{n}_{h, n-1} =   
\sigma_{r} ( \boldsymbol{E}_h^{n}+\boldsymbol{u}_h^{n}\times \boldsymbol{B}_h^{n-1} ) $}, and {\small $\alpha = s/Rm$}.  Here, the second equation is multiplied by $-\alpha$ and the third equation is multiplied by $s$.  This is because we would like to make the resulting linear system symmetric. The above discretization has nice properties, for example, energy estimate, structure-preserving, as analyzed in \cite{Hu.K;Ma.Y;Xu.J.2014a}.

\subsection{Well-posedness} 
Now we discuss about the well-posedness of scheme \eqref{eq:fully_picard1}, which is the foundation of the preconditioners we propose.  

For the sake of simplicity, we rewrite {\small $\boldsymbol{x}_{h}^{n-1}$} ( {\small $\boldsymbol{x} = \boldsymbol{u}$}, {\small $\boldsymbol{B}$}, {\small $\boldsymbol{E}$} or {\small $p$}, so is the {\small $\boldsymbol{x}$} mentioned afterward in this paragraph) as {\small $\boldsymbol{x}^{-}$} and {\small $\boldsymbol{x}_{h}^{n}$} as {\small $\boldsymbol{x}$}.  We keep the subscript {\small $h$} for the finite element spaces in consideration of clarity.  Then we can write the symmetric Picard linearization as: Find {\small $( \boldsymbol{u}, \boldsymbol{B}, \boldsymbol{E}, p ) \in \boldsymbol{X}_{h} \times Q_{h}$} such that for any {\small $( \boldsymbol{v}, \boldsymbol{C}, \boldsymbol{F}, q) \in \boldsymbol{X}_{h} \times Q_{h}$},
\begin{small}
\begin{align}
\begin{cases}
&k^{-1} ( \boldsymbol{u}, \boldsymbol{v} )
+ Re^{-1} ( \nabla \boldsymbol{u}, \nabla \boldsymbol{v} )
+ k^{-1} ( \nabla \cdot \boldsymbol{u}, \nabla \cdot \boldsymbol{v} ) 
- s ( \sigma_{r} \boldsymbol{E} \times \boldsymbol{B}^{-}, \boldsymbol{v} ) \\
& \qquad
+ s ( \sigma_{r} \boldsymbol{u} \times \boldsymbol{B}^{-}, \boldsymbol{v} \times \boldsymbol{B}^{-} ) 
- ( p, \nabla \cdot \boldsymbol{v} ) 
= ( \widetilde{ \boldsymbol{f} }, \boldsymbol{v}), \\
& - k^{-1} \alpha ( \mu_{r}^{-1} \boldsymbol{B}, \boldsymbol{C}) 
- \alpha ( \mu_{r}^{-1} \nabla \times \boldsymbol{E}, \boldsymbol{C})
= - k^{-1} \alpha ( \mu_{r}^{-1} \boldsymbol{B}^{-}, \boldsymbol{C}), \\
& s ( \sigma_{r} \boldsymbol{E}, \boldsymbol{F})
+ s ( \sigma_{r} \boldsymbol{u} \times \boldsymbol{B}^{-}, \boldsymbol{F} )
- \alpha (\mu_{r}^{-1} \boldsymbol{B}, \nabla \times \boldsymbol{F}) 
=  \boldsymbol{0}, \\
& (\nabla \cdot \boldsymbol{u}, q) = 0,
\end{cases} \label{eqn:symmetric-picard}
\end{align}
\end{small}
with
{\small $\widetilde{\boldsymbol{f} } = \boldsymbol{f}
	+ k^{-1}  \boldsymbol{u}^{-}
	- d( \bm{u}^{-}, \bm{u}^{-}, \bm{v} )$}.
For {\small $\boldsymbol{\xi} = (\boldsymbol{u}, \boldsymbol{B}, \boldsymbol{E}) \in \bm{X}_{h}$}, {\small $\boldsymbol{\eta} = (\boldsymbol{v}, \boldsymbol{C}, \boldsymbol{F}) \in \bm{X}_{h}$}, and {\small $p, ~q \in Q_{h}$}, we define a bilinear form {\small $\boldsymbol{a}_{0}(\cdot, \cdot)$} on {\small $\boldsymbol{X}_{h} \times \boldsymbol{X}_{h}$} and {\small $\boldsymbol{b}(\cdot, \cdot)$} on {\small $\boldsymbol{X}_{h} \times Q_{h}$} by
\begin{small}
\begin{align*}
\boldsymbol{a}_{0} (\boldsymbol{\xi}, \boldsymbol{\eta}) & = 
k^{-1} ( \boldsymbol{u}, \boldsymbol{v} )
+ Re^{-1} ( \nabla \boldsymbol{u}, \nabla \boldsymbol{v} )
+ k^{-1} ( \nabla \cdot \boldsymbol{u}, \nabla \cdot \boldsymbol{v} )
- s ( \sigma_{r} \boldsymbol{E} \times \boldsymbol{B}^{-}, \boldsymbol{v} ) \\
& 
+ s ( \sigma_{r} \boldsymbol{u} \times \boldsymbol{B}^{-}, \boldsymbol{v} \times \boldsymbol{B}^{-} ) 
- k^{-1} \alpha (\mu_{r}^{-1} \boldsymbol{B}, \boldsymbol{C}) 
- \alpha ( \mu_{r}^{-1} \nabla \times \boldsymbol{E}, \boldsymbol{C}) \\
& 
+ s ( \sigma_{r} \boldsymbol{E}, \boldsymbol{F})
+ s ( \sigma_{r} \boldsymbol{u} \times \boldsymbol{B}^{-}, \boldsymbol{F} )
- \alpha (\mu_{r}^{-1} \boldsymbol{B}, \nabla \times \boldsymbol{F}),
\end{align*}
\end{small}
and
\begin{small}
$$
	\boldsymbol{b}( \boldsymbol{\eta}, q ) = ( \nabla \cdot \boldsymbol{v} , q ).
$$
\end{small}
Therefore, we can write \eqref{eqn:symmetric-picard} as: Find {\small $\boldsymbol{\xi} \in \boldsymbol{X}_{h}$} and {\small $p \in Q_{h}$} such that
\begin{small}
\begin{align}
\begin{cases}
& \boldsymbol{a}_{0} (\boldsymbol{\xi}, \boldsymbol{\eta}) + 
\boldsymbol{b} (\boldsymbol{\eta}, p) 
= \langle \boldsymbol{h}, \boldsymbol{\eta} \rangle, 
~ \forall \eta \in \boldsymbol{X}_{h},  \\
& \boldsymbol{b}(\boldsymbol{\xi}, q) = \langle g, q \rangle,
~ \forall q \in Q_{h}. 
\end{cases}\label{eqn:problem-a0-1}
\end{align}
\end{small}
In order to analyze the well-posedness of this problem, we analyze an auxiliary problem first. Define a bilinear form 
\begin{small}
\begin{align*}
\boldsymbol{a} (\boldsymbol{\xi}, \boldsymbol{\eta}) & = 
a_{0} ( \bm{\xi}, \bm{\eta} )
- \alpha ( \mu_{r}^{-1} \nabla \cdot \boldsymbol{B}, \nabla \cdot \boldsymbol{C} ).
\end{align*}
\end{small}
The corresponding auxiliary problem is: Find {\small $\boldsymbol{\xi} \in \boldsymbol{X}_{h}$} and {\small $p \in Q_{h}$} such that
\begin{small}
\begin{align}
\begin{cases}
& \boldsymbol{a} (\boldsymbol{\xi}, \boldsymbol{\eta}) + 
\boldsymbol{b} (\boldsymbol{\eta}, p) 
= \langle \boldsymbol{h}, \boldsymbol{\eta} \rangle, 
~ \forall \eta \in \boldsymbol{X}_{h},  \\
& \boldsymbol{b}(\boldsymbol{\xi}, q) = \langle g, q \rangle,
~ \forall q \in Q_{h}. 
\end{cases}\label{eqn:problem-aux-1}
\end{align}
\end{small}

As shown in \cite{Hu.K;Ma.Y;Xu.J.2014a}, the mixed formulations \eqref{eqn:problem-a0-1} and \eqref{eqn:problem-aux-1} are equivalent if {\small $\boldsymbol{h} = ( \boldsymbol{f}, \boldsymbol{l}, \boldsymbol{r} ) \in \boldsymbol{V}_{h}^{\ast} \times [ \boldsymbol{V}_{h}^{d} ]^{\ast} \times [\boldsymbol{V}_{h}^{c}]^{\ast} $} such that {\small $
	\langle \boldsymbol{l}, \boldsymbol{C} \rangle 
	= (\boldsymbol{l}_{R}, \boldsymbol{C} ), ~ 
	\forall \boldsymbol{C} \in \boldsymbol{V}^{d}$} for some {\small $ \boldsymbol{l}_{R} \in \boldsymbol{V}^{d, 0}$}. The well-posedness of the mixed formulation \eqref{eqn:problem-a0-1} follows directly from that of the auxiliary problem \eqref{eqn:problem-aux-1}. Therefore, we focus on proving the well-posedness of the auxiliary problem.  

As usual, we use Brezzi's theorem to analyze the well-posedness of the above auxiliary problem.  As discussed in \cite{Mardal.K;Winther.R.2011a}, ensuring that the constants appearing in the inf-sup conditions are independent of the physical and discretize parameters is crucial to design robust block preconditioners for coupled systems.  Therefore, we introduce the weighted norms
\begin{small}
\begin{align}
& \Vert ( \boldsymbol{v}, \boldsymbol{C}, \boldsymbol{F} ) \Vert_{\boldsymbol{X}}^{2} 
= \Vert \boldsymbol{v} \Vert_{\mathcal{H}_{1} }^{2} 
+ \Vert \boldsymbol{C} \Vert_{\mathcal{H}_{3} }^{2}
+ \Vert \boldsymbol{F} \Vert_{\mathcal{H}_{ 4 } }^{2}
,~
\Vert q \Vert_{Q } = \Vert q \Vert_{\mathcal{H}_{2} }, 
\nonumber \\
& \Vert ( \bm{v}, q, \bm{C}, \bm{F} ) \Vert^{2}_{\mathcal{H}}
= \Vert \boldsymbol{v} \Vert_{\mathcal{H}_{1} }^{2} 
+ \Vert q \Vert_{\mathcal{H}_{2} }^{2}
+ \Vert \boldsymbol{C} \Vert_{\mathcal{H}_{3}}^{2}
+ \Vert \boldsymbol{F} \Vert_{\mathcal{H}_{ 4 } }^{2}
, \label{eq:Hnorm}
\end{align} 
\end{small}
with
\begin{small}
\begin{align*}
& \Vert \boldsymbol{v} \Vert_{\mathcal{H}_{1} }^{2}
= k^{-1} \Vert \boldsymbol{v} \Vert^{2} 
+ Re^{-1} \Vert \nabla \boldsymbol{v} \Vert^{2}
+ k^{-1} \Vert \nabla \cdot \boldsymbol{v} \Vert^{2}
+ s \Vert \bm{v} \times \bm{B}^{-} \Vert_{\sigma_{r}}^{2}, \\
& \Vert q \Vert^{2}_{\mathcal{H}_{2}} = k \Vert q \Vert^{2}, \\
& \Vert \boldsymbol{C} \Vert_{\mathcal{H}_{3} }^{2}
= k^{-1} \alpha \Vert \boldsymbol{C} \Vert^{2}_{\mu_{r}^{-1}}
+ \alpha \Vert \nabla \cdot \boldsymbol{C} \Vert^{2}_{\mu_{r}^{-1}}, \\
& \Vert \boldsymbol{F} \Vert_{\mathcal{H}_{4} }^{2}
= s \Vert \boldsymbol{F} \Vert^{2}_{\sigma_{r}}
+ k \alpha \Vert \nabla \times \boldsymbol{F} \Vert^{2}_{\mu_{r}^{-1}}, 
\end{align*}
\end{small}
where {\small $\mathcal{H}_{i}$} ({\small $i = 1$}, {\small $2$}, {\small $3$} or {\small $4$}) is a symmetric positive operator (SPD) such that {\small $\Vert \bm{x} \Vert_{\mathcal{H}_{i} }^{2} = ( \mathcal{H}_{i} \bm{x}, \bm{x} )$}.

Results similar to the following theorem can be found in \cite{Hu.K;Ma.Y;Xu.J.2014a}, by introducing more carefully chosen weighted norms, this theorem provides better bounds than the previous one.

\begin{theorem}\label{thm:wellposed_a}
Given {\small $\boldsymbol{h} \in \boldsymbol{X}^{\ast}_{h}$} and {\small $g \in Q^{\ast}_{h}$}, the auxiliary problem \eqref{eqn:problem-aux-1} is well-posed if {\small $k$} is sufficiently small, i.e. {\small $k \leq k_{0}$}, where
\begin{small} 
\begin{align}
& k_{0} = \frac{1}{8 s} \Vert \sqrt{\sigma_{r}} \boldsymbol{B}^{-} \Vert_{0, \infty}^{-2}.
\label{eq:def_k0}
\end{align}
\end{small}
That is, for any {\small $\boldsymbol{h} \in \boldsymbol{X}^{\ast}_{h}$} and {\small $g \in Q^{\ast}_{h}$}, there exists a unique {\small $( \boldsymbol{\xi}, p ) = ( \boldsymbol{u}, \boldsymbol{B}, \boldsymbol{E}, p ) \in \boldsymbol{X}_{h} \times Q_{h}$} that solves the above problem and satisfies:
\begin{small}
$$
	\Vert \boldsymbol{u} \Vert_{\mathcal{H}_{1} } 
	+ \Vert \boldsymbol{B} \Vert_{\mathcal{H}_{3} }
	+ \Vert \boldsymbol{E} \Vert_{\mathcal{H}_{4} }
	+ \Vert p \Vert_{\mathcal{H}_{2} } 
	\leq C\left( \Vert \boldsymbol{h} \Vert_{\boldsymbol{X}^{\ast}} + \Vert g \Vert_{Q^{\ast}} \right),
$$
%\lesssim
where the constant {\small $C$} does not depend on the mesh size $h$, the time  step size $k$, and physical parameters {\small $Rm$}, {\small $s$}, {\small $\mu_r$}, and {\small $\sigma_r$}.
\end{small}
\end{theorem}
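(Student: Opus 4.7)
The plan is to apply Brezzi's theorem for saddle-point problems to \eqref{eqn:problem-aux-1} equipped with the weighted norms \eqref{eq:Hnorm}. Four ingredients need verification, all with constants independent of $h$, $k$, $Rm$, $s$, $\mu_{r}$, and $\sigma_{r}$: continuity of $\boldsymbol{a}$ on $\boldsymbol{X}_{h}\times\boldsymbol{X}_{h}$ and of $\boldsymbol{b}$ on $\boldsymbol{X}_{h}\times Q_{h}$, an inf-sup condition for $\boldsymbol{b}$, and an inf-sup (Babu\v{s}ka-type) condition for $\boldsymbol{a}$. Continuity is routine term-by-term Cauchy-Schwarz together with $\Vert\boldsymbol{v}\times\boldsymbol{B}^{-}\Vert_{\sigma_{r}}\le\Vert\sqrt{\sigma_{r}}\boldsymbol{B}^{-}\Vert_{0,\infty}\Vert\boldsymbol{v}\Vert$; for $\boldsymbol{b}$ one simply factors out $k^{1/2}$ to match $\Vert q\Vert_{\mathcal{H}_{2}}$ against $k^{-1/2}\Vert\nabla\cdot\boldsymbol{v}\Vert$.

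The heart of the argument is the inf-sup for $\boldsymbol{a}$: given $\boldsymbol{\xi}=(\boldsymbol{u},\boldsymbol{B},\boldsymbol{E})\in\boldsymbol{X}_{h}$, I construct a test $\boldsymbol{\eta}\in\boldsymbol{X}_{h}$ with $\boldsymbol{a}(\boldsymbol{\xi},\boldsymbol{\eta})\gtrsim\Vert\boldsymbol{\xi}\Vert_{\boldsymbol{X}}^{2}$ and $\Vert\boldsymbol{\eta}\Vert_{\boldsymbol{X}}\lesssim\Vert\boldsymbol{\xi}\Vert_{\boldsymbol{X}}$. The natural first candidate $\boldsymbol{\eta}_{0}=(\boldsymbol{u},-\boldsymbol{B},\boldsymbol{E})$ flips the sign on the indefinite magnetic diagonal and cancels the pair $\pm\alpha(\mu_{r}^{-1}\nabla\times\boldsymbol{E},\boldsymbol{B})$, but the vector identity $(\boldsymbol{u}\times\boldsymbol{B}^{-},\boldsymbol{E})=-(\boldsymbol{E}\times\boldsymbol{B}^{-},\boldsymbol{u})$ makes the two remaining coupling terms add rather than cancel, leaving a stray $2s(\sigma_{r}\boldsymbol{u}\times\boldsymbol{B}^{-},\boldsymbol{E})$. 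Bounding this by $2s\Vert\sqrt{\sigma_{r}}\boldsymbol{B}^{-}\Vert_{0,\infty}\Vert\boldsymbol{u}\Vert\,\Vert\boldsymbol{E}\Vert_{\sigma_{r}}$ and applying Young's inequality with parameter $\tfrac{1}{2}$ yields $2s\Vert\sqrt{\sigma_{r}}\boldsymbol{B}^{-}\Vert_{0,\infty}^{2}\Vert\boldsymbol{u}\Vert^{2}+\tfrac{1}{2}s\Vert\boldsymbol{E}\Vert_{\sigma_{r}}^{2}$; the smallness condition $k\le k_{0}=(8s\Vert\sqrt{\sigma_{r}}\boldsymbol{B}^{-}\Vert_{0,\infty}^{2})^{-1}$ is precisely what makes the first piece at most $\tfrac{1}{4}k^{-1}\Vert\boldsymbol{u}\Vert^{2}$, absorbable into the leading $k^{-1}\Vert\boldsymbol{u}\Vert^{2}$. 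This single absorption is the main obstacle and is exactly where \eqref{eq:def_k0} originates.

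With $\boldsymbol{\eta}_{0}$ we recover every term of $\Vert\boldsymbol{\xi}\Vert_{\boldsymbol{X}}^{2}$ except $k\alpha\Vert\nabla\times\boldsymbol{E}\Vert_{\mu_{r}^{-1}}^{2}$. To produce it, I augment the test to $\boldsymbol{\eta}=(\boldsymbol{u},-\boldsymbol{B}-k\nabla\times\boldsymbol{E},\boldsymbol{E})$; this lies in $\boldsymbol{X}_{h}$ because the structure-preserving discretization \cite{Hu.K;Ma.Y;Xu.J.2014a} enforces $\nabla\times\boldsymbol{V}_{h}^{c}\subset\boldsymbol{V}_{h}^{d}$, and the identity $\nabla\cdot(\nabla\times\boldsymbol{E})=0$ leaves the auxiliary $\nabla\cdot$-penalty untouched. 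The new contributions are the desired $+k\alpha\Vert\nabla\times\boldsymbol{E}\Vert_{\mu_{r}^{-1}}^{2}$ together with a harmless cross term $\alpha(\mu_{r}^{-1}\boldsymbol{B},\nabla\times\boldsymbol{E})$, which Young's inequality splits between the already-produced $k^{-1}\alpha\Vert\boldsymbol{B}\Vert_{\mu_{r}^{-1}}^{2}$ and half of the new curl term. The bound $\Vert\boldsymbol{\eta}\Vert_{\boldsymbol{X}}\lesssim\Vert\boldsymbol{\xi}\Vert_{\boldsymbol{X}}$ follows from the triangle inequality on the $\mathcal{H}_{3}$-norm together with $\nabla\cdot(\nabla\times\boldsymbol{E})=0$.

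Finally, the inf-sup for $\boldsymbol{b}$ reduces to \eqref{inf-sup}: given $q\in Q_{h}$, take $\boldsymbol{\eta}=(\boldsymbol{v}_{0},0,0)$ with $\boldsymbol{v}_{0}$ realizing the Stokes inf-sup, and verify $\Vert\boldsymbol{v}_{0}\Vert_{\mathcal{H}_{1}}^{2}\lesssim k^{-1}\Vert\nabla\boldsymbol{v}_{0}\Vert^{2}$ by bounding $\Vert\boldsymbol{v}_{0}\Vert^{2}$ and $\Vert\nabla\cdot\boldsymbol{v}_{0}\Vert^{2}$ by Poincar\'e and controlling $s\Vert\boldsymbol{v}_{0}\times\boldsymbol{B}^{-}\Vert_{\sigma_{r}}^{2}$ via $sk\Vert\sqrt{\sigma_{r}}\boldsymbol{B}^{-}\Vert_{0,\infty}^{2}\le 1/8$, again courtesy of $k\le k_{0}$. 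Assembling continuity, both inf-sup conditions, and Brezzi's theorem then yields existence, uniqueness, and the claimed parameter-robust stability estimate.
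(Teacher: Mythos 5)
Your proposal is correct and follows essentially the same route as the paper: Brezzi's theorem in the weighted norms, the Stokes inf-sup for $\boldsymbol{b}$, and for the inf-sup of $\boldsymbol{a}$ the test function $\boldsymbol{\eta}=(\boldsymbol{u},\,-(\boldsymbol{B}+k\nabla\times\boldsymbol{E}),\,\boldsymbol{E})$ with the coupling term $2s(\sigma_{r}\boldsymbol{u}\times\boldsymbol{B}^{-},\boldsymbol{E})$ absorbed via Young's inequality and $k\le k_{0}$. The only cosmetic difference is that the paper scales the magnetic component by $\tfrac12$, which cancels the $\alpha(\mu_{r}^{-1}\boldsymbol{B},\nabla\times\boldsymbol{E})$ cross term exactly, whereas your unscaled choice handles it with one extra (harmless) Young step.
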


Proof of Theorem \ref{thm:wellposed_a} is similar to that in \cite{Hu.K;Ma.Y;Xu.J.2014a}. We step by step prove the following properties.
\begin{enumerate}
\item {\small $\boldsymbol{a} (\cdot, \cdot)$} and {\small $\boldsymbol{b}(\cdot, \cdot)$} are bounded;
\item inf-sup condition holds for {\small $\boldsymbol{b}(\cdot, \cdot)$};
\item inf-sup conditions hold for {\small $\boldsymbol{a}(\cdot, \cdot)$} in the kernel of the operator induced by {\small $\boldsymbol{b}(\cdot, \cdot)$}.
\end{enumerate}

%%% boundedness of a( , ) & b( , ).
\begin{lemma}\label{lemma:boundness}
Both {\small $\boldsymbol{a}(\cdot, \cdot)$} and {\small $\boldsymbol{b}(\cdot, \cdot)$} are bounded operators. That is, 
\begin{small}
\begin{align*}
& \boldsymbol{a} ( \boldsymbol{\xi}, \boldsymbol{\eta} )
\leq C \Vert \boldsymbol{\xi} \Vert_{\boldsymbol{X}}
\Vert \boldsymbol{\eta} \Vert_{\boldsymbol{X}}, \\
& \boldsymbol{b} ( \boldsymbol{\eta}, q )
\leq C \Vert \boldsymbol{\eta} \Vert_{\boldsymbol{X}}
\Vert q \Vert_{Q},
\end{align*} 
\end{small}
where {\small $C$} is a constant independent of the mesh size $h$, the time  step size $k$, and physical parameters {\small $Rm$}, {\small $s$}, {\small $\mu_r$}, and {\small $\sigma_r$}.
\end{lemma}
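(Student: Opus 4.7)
The plan is to prove boundedness term-by-term by applying Cauchy--Schwarz in the appropriate weighted $L^2$ inner products and matching each resulting factor with a piece of one of the $\mathcal{H}_i$-norms, so that no stray powers of $k$, $\alpha$, $s$, $Re$, $\mu_r$, or $\sigma_r$ remain. The whole point of the weighted norms in \eqref{eq:Hnorm} is that every individual term in $\boldsymbol{a}$ and $\boldsymbol{b}$ is designed to be exactly controlled by products of norm-pieces from the two arguments, which reduces the proof to careful bookkeeping.

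For $\boldsymbol{b}(\boldsymbol{\eta}, q) = (\nabla\cdot\boldsymbol{v}, q)$, I would simply write
\begin{small}
\[
|(\nabla\cdot\boldsymbol{v}, q)| \leq \bigl(k^{-1}\|\nabla\cdot\boldsymbol{v}\|^{2}\bigr)^{1/2}\bigl(k\|q\|^{2}\bigr)^{1/2} \leq \|\boldsymbol{v}\|_{\mathcal{H}_1}\|q\|_{\mathcal{H}_2},
\]
\end{small}
so the bound is sharp with constant $1$. For $\boldsymbol{a}$, I would group the ten terms into five families. \textbf{(i) Diagonal Stokes-type terms}: $k^{-1}(\boldsymbol{u},\boldsymbol{v})$, $Re^{-1}(\nabla\boldsymbol{u},\nabla\boldsymbol{v})$, $k^{-1}(\nabla\cdot\boldsymbol{u},\nabla\cdot\boldsymbol{v})$ all bound directly by $\|\boldsymbol{u}\|_{\mathcal{H}_1}\|\boldsymbol{v}\|_{\mathcal{H}_1}$ by Cauchy--Schwarz. \textbf{(ii) Lorentz-type cross-product terms}: the key observation is that $s\|\cdot\times\boldsymbol{B}^-\|_{\sigma_r}^{2}$ is baked into $\mathcal{H}_1$ and $s\|\cdot\|_{\sigma_r}^{2}$ into $\mathcal{H}_4$. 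Using the triple product identity $(\boldsymbol{E}\times\boldsymbol{B}^-)\cdot\boldsymbol{v} = -\boldsymbol{E}\cdot(\boldsymbol{v}\times\boldsymbol{B}^-)$, the terms $s(\sigma_r\boldsymbol{E}\times\boldsymbol{B}^-,\boldsymbol{v})$, $s(\sigma_r\boldsymbol{u}\times\boldsymbol{B}^-,\boldsymbol{v}\times\boldsymbol{B}^-)$ and $s(\sigma_r\boldsymbol{u}\times\boldsymbol{B}^-,\boldsymbol{F})$ are controlled by products of $(s\|\boldsymbol{v}\times\boldsymbol{B}^-\|_{\sigma_r}^{2})^{1/2}$, $(s\|\boldsymbol{E}\|_{\sigma_r}^{2})^{1/2}$, etc.\ with matching factors from $\boldsymbol{\xi}$; crucially $\boldsymbol{B}^-$ never appears bare. \textbf{(iii) Magnetic diagonal terms}: $k^{-1}\alpha(\mu_r^{-1}\boldsymbol{B},\boldsymbol{C})$ and $\alpha(\mu_r^{-1}\nabla\cdot\boldsymbol{B},\nabla\cdot\boldsymbol{C})$ match the two pieces of $\|\cdot\|_{\mathcal{H}_3}$ directly. \textbf{(iv) Faraday/Amp\`ere coupling}: for $\alpha(\mu_r^{-1}\nabla\times\boldsymbol{E},\boldsymbol{C})$ and $\alpha(\mu_r^{-1}\boldsymbol{B},\nabla\times\boldsymbol{F})$, I would split the weight $\alpha$ as $(k\alpha)^{1/2}\cdot(k^{-1}\alpha)^{1/2}$, pairing $(k\alpha)^{1/2}\|\nabla\times\cdot\|_{\mu_r^{-1}}$ with $\|\cdot\|_{\mathcal{H}_4}$ and $(k^{-1}\alpha)^{1/2}\|\cdot\|_{\mu_r^{-1}}$ with $\|\cdot\|_{\mathcal{H}_3}$. \textbf{(v) Ohmic term}: $s(\sigma_r\boldsymbol{E},\boldsymbol{F})$ matches the first piece of $\|\cdot\|_{\mathcal{H}_4}$.

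Once each of the $\mathcal{O}(1)$ many terms has been shown to be bounded by a product of relevant $\mathcal{H}_i$-pieces of $\boldsymbol{\xi}$ and $\boldsymbol{\eta}$, a final discrete Cauchy--Schwarz over the finitely many terms yields $|\boldsymbol{a}(\boldsymbol{\xi},\boldsymbol{\eta})| \leq C\|\boldsymbol{\xi}\|_{\boldsymbol{X}}\|\boldsymbol{\eta}\|_{\boldsymbol{X}}$ with $C$ a purely combinatorial constant. I do not anticipate any genuine obstacle: the design of the weighted norms in \eqref{eq:Hnorm} has already absorbed every parameter that could potentially cause $C$ to depend on $k$, $h$, $Re$, $Rm$, $s$, $\mu_r$, or $\sigma_r$. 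The only step demanding minor care is family (iv), where the weight $\alpha$ must be split asymmetrically between the two arguments so that the powers of $k$ cancel; and family (ii), where the triple product identity is used to reroute the $\boldsymbol{B}^-$ factor into the argument that has the $s\|\cdot\times\boldsymbol{B}^-\|_{\sigma_r}^{2}$ piece in its norm.
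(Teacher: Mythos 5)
Your proposal is correct and follows essentially the same route as the paper's proof: a term-by-term Cauchy--Schwarz estimate in the weighted inner products, with the $\bm{B}^-$ cross-product terms rerouted via the triple-product identity onto the $s\Vert\cdot\times\bm{B}^-\Vert_{\sigma_r}$ piece of $\mathcal{H}_1$ and the weight $\alpha$ split as $(k\alpha)^{1/2}(k^{-1}\alpha)^{1/2}$ in the curl-coupling terms. No substantive difference from the paper's argument.
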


% proof.
\begin{proof}
It is enough to show that every term in {\small $\boldsymbol{a}(\cdot, \cdot)$} and {\small $\boldsymbol{b}(\cdot, \cdot)$} is bounded.
For the Navier-Stokes equation part,
\begin{small}
$$
	k^{-1} \lvert ( \boldsymbol{u}, \boldsymbol{v} ) \rvert
	+ Re^{-1} \lvert ( \nabla \boldsymbol{u}, \nabla \boldsymbol{v} ) \rvert
	+ k^{-1} \lvert ( \nabla \cdot \bm{u} , \nabla \cdot \bm{v} ) \rvert
	\leq C
	\Vert \boldsymbol{u} \Vert_{\mathcal{H}_{1} } 
	\Vert \boldsymbol{v} \Vert_{\mathcal{H}_{1} }.
$$
\end{small}
with a constant {\small $C$} independent of {\small $k$} and {\small $h$}. For the nonlinear term,
\begin{small}
$$
	\lvert s ( \sigma_{r} \boldsymbol{u} \times \boldsymbol{B}^{-}, 
	\boldsymbol{E} )  \rvert
	\leq s \Vert \boldsymbol{u} 
	\times \boldsymbol{B}^{-} \Vert_{\sigma_{r}}
	\Vert \boldsymbol{E} \Vert_{\sigma_{r}}
	\leq \Vert \boldsymbol{u} \Vert_{\mathcal{H}_{1} } 
	\Vert \boldsymbol{E} \Vert_{\mathcal{H}_{4} },
$$
\end{small}
and
\begin{small}
$$
	\lvert s ( \sigma_{r} \boldsymbol{u} \times \boldsymbol{B}^{-}, 
	\boldsymbol{v} \times \boldsymbol{B}^{-} ) \rvert
	\leq s \Vert \boldsymbol{u} 
	\times \boldsymbol{B}^{-} \Vert_{\sigma_{r}}
	\Vert \boldsymbol{v} 
	\times \boldsymbol{B}^{-} \Vert_{\sigma_{r}}
	\leq 
	\Vert \boldsymbol{u} \Vert_{\mathcal{H}_{1} } 
	\Vert \boldsymbol{v} \Vert_{\mathcal{H}_{1} }.
$$
\end{small}
Moreover, we have
\begin{small}
$$
	\lvert \alpha ( \mu_{r}^{-1} \nabla \times \boldsymbol{E}, 
	\boldsymbol{C} ) \rvert
	\leq \alpha
	\Vert \nabla \times \boldsymbol{E} \Vert_{\mu_{r}^{-1}}
	\Vert \boldsymbol{C} \Vert_{\mu_{r}^{-1}}
	\leq \Vert \boldsymbol{E} \Vert_{\mathcal{H}_{4} }
	\Vert \boldsymbol{C} \Vert_{\mathcal{H}_{3} }.
$$
\end{small}
Note that, 
\begin{small}
\begin{align*}
& \lvert s ( \sigma_{r} \boldsymbol{E}, \boldsymbol{F} ) \rvert
	\leq s \Vert \boldsymbol{E} \Vert_{\sigma_{r}}
	\Vert \boldsymbol{F} \Vert_{\sigma_{r}}
	\leq \Vert \boldsymbol{E} \Vert_{\mathcal{H}_{4} }
	\Vert \boldsymbol{F} \Vert_{\mathcal{H}_{4} }, \\
& \lvert  k^{-1} \alpha ( \mu_{r}^{-1} \boldsymbol{B}, 
	\boldsymbol{C} ) \rvert
	\leq k^{-1} \alpha \Vert \boldsymbol{B} \Vert_{\mu_{r}^{-1}}
	\Vert \boldsymbol{C} \Vert_{\mu_{r}^{-1}}
	\leq \Vert \boldsymbol{B} \Vert_{\mathcal{H}_{3} }
	\Vert \boldsymbol{C} \Vert_{\mathcal{H}_{3} }, 
\end{align*}
\end{small}
and 
\begin{small}
$$
	\lvert ( \nabla \cdot \boldsymbol{v}, q ) \rvert
	\leq \Vert \boldsymbol{v} \Vert_{\mathcal{H}_{1} } \Vert q \Vert_{\mathcal{H}_{2} }. 
$$ 
\end{small}
These estimates conclude the lemma.
\end{proof}

%%% inf-sup of b( , ).
\begin{lemma}\label{lemma:b_infsup}
If {\small $k \leq k_{0}$}, defined in \eqref{eq:def_k0}, there exists a constant {\small $\zeta> 0$} such that
\begin{small}
$$
	\inf\limits_{q \in Q} \sup\limits_{\boldsymbol{\eta} \in \boldsymbol{X}}
	\frac{ \boldsymbol{b}(\boldsymbol{\eta}, q) }
	{ \Vert \boldsymbol{\eta} \Vert_{\boldsymbol{X}}
	\Vert q \Vert_{Q} } \geq \zeta > 0, 
$$
\end{small}
and {\small $\zeta$} is independent of the mesh size $h$, the time  step size $k$, and physical parameters {\small $Rm$}, {\small $s$}, {\small $\mu_r$}, and {\small $\sigma_r$}.
\end{lemma}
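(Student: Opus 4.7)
Since $\boldsymbol{b}(\boldsymbol{\eta}, q) = (\nabla \cdot \boldsymbol{v}, q)$ depends only on the velocity component of $\boldsymbol{\eta} = (\boldsymbol{v}, \boldsymbol{C}, \boldsymbol{F})$, I would restrict the supremum to test functions of the form $(\boldsymbol{v}, \boldsymbol{0}, \boldsymbol{0})$, so that $\|\boldsymbol{\eta}\|_{\boldsymbol{X}} = \|\boldsymbol{v}\|_{\mathcal{H}_1}$. The task then reduces to producing, for each $q \in Q_h$, a specific $\boldsymbol{v}_q \in \boldsymbol{V}_h$ giving a good lower bound on $(\nabla \cdot \boldsymbol{v}_q, q) / (\|\boldsymbol{v}_q\|_{\mathcal{H}_1} \cdot k^{1/2}\|q\|)$, which is exactly the ratio appearing in the claimed inf-sup.

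The natural candidate is the classical Stokes inf-sup witness supplied by \eqref{inf-sup}: for each $q \in Q_h$ there exists $\boldsymbol{v}_q \in \boldsymbol{V}_h$ with $\nabla \cdot \boldsymbol{v}_q = q$ in the $Q_h$-duality sense and $\|\nabla \boldsymbol{v}_q\| \leq \beta^{-1}\|q\|$, so that $(\nabla \cdot \boldsymbol{v}_q, q) = \|q\|^2$. What remains is to bound each term of $\|\boldsymbol{v}_q\|_{\mathcal{H}_1}^2$ by a multiple of $k^{-1}\|q\|^2$. Poincar\'{e}'s inequality gives $k^{-1}\|\boldsymbol{v}_q\|^2 \leq k^{-1} C_P^2 \beta^{-2}\|q\|^2$; the grad-div contribution is exactly $k^{-1}\|q\|^2$; the viscous term obeys $Re^{-1}\|\nabla \boldsymbol{v}_q\|^2 \leq Re^{-1}\beta^{-2}\|q\|^2$, which can be absorbed since $Re$ is not among the parameters with respect to which $\zeta$ must be uniform; and, crucially, the MHD coupling term satisfies
\begin{equation*}
s\|\boldsymbol{v}_q \times \boldsymbol{B}^{-}\|_{\sigma_{r}}^{2}
\leq s\|\sqrt{\sigma_{r}}\boldsymbol{B}^{-}\|_{0,\infty}^{2}\|\boldsymbol{v}_q\|^{2}
\leq \frac{1}{8k}\|\boldsymbol{v}_q\|^{2}
\leq \frac{C_P^{2}\beta^{-2}}{8k}\|q\|^{2},
\end{equation*}
where the middle inequality invokes the smallness hypothesis $k \leq k_0$ with $k_0$ as in \eqref{eq:def_k0}. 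Summing these four bounds yields $\|\boldsymbol{v}_q\|_{\mathcal{H}_1}^2 \leq C\, k^{-1}\|q\|^2$ with $C$ depending only on $\beta$, $C_P$ (and at worst $Re$), whence the ratio satisfies $(\nabla \cdot \boldsymbol{v}_q, q)/(\|\boldsymbol{v}_q\|_{\mathcal{H}_1}\cdot k^{1/2}\|q\|) \geq 1/\sqrt{C} =: \zeta > 0$.

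The main obstacle is the estimation of the coupling term $s\|\boldsymbol{v}_q \times \boldsymbol{B}^{-}\|_{\sigma_r}^2$: handled naively, it carries the $s$- and $\boldsymbol{B}^{-}$-dependent prefactor $s\|\sqrt{\sigma_r}\boldsymbol{B}^{-}\|_{0,\infty}^2$, and any such dependence would propagate into $\zeta$ and destroy the claimed robustness. The particular form of $k_0$ is engineered precisely so that this physical factor is exchanged for a benign $1/(8k)$, which then matches the $k^{-1}$ scaling of the grad-div and mass terms and is compatible with the pressure weight $\|q\|_Q^2 = k\|q\|^2$. Every other term in the norm either already scales like $k^{-1}\|q\|^2$ or, as with the viscous term, can be cleanly absorbed; so once the coupling term is disposed of, the rest of the argument is a routine Poincar\'{e}-and-inf-sup calculation.
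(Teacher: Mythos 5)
Your proposal is correct and follows essentially the same route as the paper: restrict the supremum to test functions of the form $(\boldsymbol{v},\boldsymbol{0},\boldsymbol{0})$, invoke the standard Stokes inf-sup condition \eqref{inf-sup}, and then show that $\Vert \boldsymbol{v}\Vert_{\mathcal{H}_1}\Vert q\Vert_{\mathcal{H}_2}\leq C\Vert\boldsymbol{v}\Vert_1\Vert q\Vert_0$ with $C$ independent of the relevant parameters, the hypothesis $k\leq k_0$ being used precisely to tame the coupling term $s\Vert\boldsymbol{v}\times\boldsymbol{B}^-\Vert_{\sigma_r}^2$. The paper states this norm comparison without detail, whereas you carry out the term-by-term estimate explicitly; the substance is the same.
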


\begin{proof}
It is well-known that the inf-sup condition of {\small $\boldsymbol{b}(\cdot, \cdot)$} for stable finite element pairs holds, i,e.
\begin{small}
$$	
	\inf\limits_{q \in Q} \sup\limits_{\boldsymbol{v} \in \boldsymbol{X}}
	\frac{ (\nabla \cdot \boldsymbol{v}, q) }
	{ \Vert \boldsymbol{v} \Vert_{1}
	\Vert q \Vert_{0} } \geq \zeta > 0,
$$
\end{small}
where {\small $\zeta$} is a constant independent of {\small $k$}. Therefore, for any {\small $q \in Q$}, we can choose {\small $\boldsymbol{\eta} = ( \boldsymbol{v}, \boldsymbol{0}, \boldsymbol{0} ) \in \boldsymbol{X}$} such that
\begin{small}
$$	
	\frac{ \boldsymbol{b} (\boldsymbol{\eta}, q) }
	{ \Vert \boldsymbol{v} \Vert_{1}
	\Vert q \Vert_{0} } \geq \zeta > 0.
$$
\end{small}
Note that
\begin{small}
$$
	\Vert \boldsymbol{v} \Vert_{\mathcal{H}_{1} } \Vert q \Vert_{\mathcal{H}_{2} } 
	\leq C \Vert \boldsymbol{v} \Vert_{1} \Vert q \Vert_{0},
$$
\end{small}
and the constant {\small $C$} is independent of {\small $k$}, which completes the proof.
\end{proof}

%%% inf-sup of a( , ).
\begin{lemma}\label{lemma:a_infsup}
If {\small $k \leq k_{0}$}, defined in \eqref{eq:def_k0}, there exists a constant {\small $\beta > 0$} such that
\begin{small}
\begin{align*}
& \inf\limits_{0 \neq \boldsymbol{\xi} \in 
\boldsymbol{X}^{0, \boldsymbol{u} }}
\sup \limits_{0 \neq \boldsymbol{\eta} \in 
\boldsymbol{X}^{0, \boldsymbol{u} }}
\frac{ \boldsymbol{a}(\boldsymbol{\xi}, \boldsymbol{\eta}) }{\Vert \boldsymbol{\xi} \Vert_{\boldsymbol{X}} \Vert \boldsymbol{\eta} \Vert_{\boldsymbol{X}} } \geq \beta > 0, \\
& \inf\limits_{0 \neq \boldsymbol{\eta} \in 
\boldsymbol{X}^{0, \boldsymbol{u} }}
\sup \limits_{0 \neq \boldsymbol{\xi} \in 
\boldsymbol{X}^{0, \boldsymbol{u} }}
\frac{ \boldsymbol{a}(\boldsymbol{\xi}, \boldsymbol{\eta}) }{\Vert \boldsymbol{\xi} \Vert_{\boldsymbol{X}} \Vert \boldsymbol{\eta} \Vert_{\boldsymbol{X}} } \geq \beta > 0,
\end{align*}
\end{small}
and {\small $\beta$} is independent of the mesh size $h$, the time  step size $k$, and physical parameters {\small $Rm$}, {\small $s$}, {\small $\mu_r$}, and {\small $\sigma_r$}. Here,
{\small $$\boldsymbol{X}^{0, \boldsymbol{u} } = \left\{ \boldsymbol{u} \in \boldsymbol{V}_{h}, ~ (\nabla \cdot \boldsymbol{u}, q) = 0, ~ \forall q \in Q_{h} \right\}.$$}
\end{lemma}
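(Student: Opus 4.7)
The plan is to establish the first inf-sup condition directly by constructing an admissible test function; the second then follows for free, since $\boldsymbol{a}(\cdot,\cdot)$ is in fact symmetric on $\boldsymbol{X}_h \times \boldsymbol{X}_h$. Symmetry is not immediately visible in the written definition, but the cyclic identity $(\boldsymbol{a}\times\boldsymbol{b})\cdot\boldsymbol{c} = -(\boldsymbol{c}\times\boldsymbol{b})\cdot\boldsymbol{a}$ converts $-s(\sigma_r \boldsymbol{E}\times\boldsymbol{B}^-,\boldsymbol{v})$ into $+s(\sigma_r\boldsymbol{v}\times\boldsymbol{B}^-,\boldsymbol{E})$, matching the term $+s(\sigma_r\boldsymbol{u}\times\boldsymbol{B}^-,\boldsymbol{F})$ once $(\boldsymbol{u},\boldsymbol{v})$ and $(\boldsymbol{E},\boldsymbol{F})$ are swapped; the remaining off-diagonal terms are manifestly symmetric.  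So the objective reduces to: given $\boldsymbol{\xi}=(\boldsymbol{u},\boldsymbol{B},\boldsymbol{E})\in\boldsymbol{X}^{0,\boldsymbol{u}}$, exhibit $\boldsymbol{\eta}\in\boldsymbol{X}^{0,\boldsymbol{u}}$ with $\boldsymbol{a}(\boldsymbol{\xi},\boldsymbol{\eta})\geq c\|\boldsymbol{\xi}\|_{\boldsymbol{X}}^{2}$ and $\|\boldsymbol{\eta}\|_{\boldsymbol{X}}\leq C\|\boldsymbol{\xi}\|_{\boldsymbol{X}}$ with $c,C$ independent of all the listed parameters.

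The natural first guess is $\boldsymbol{\eta}_1=(\boldsymbol{u},-\boldsymbol{B},\boldsymbol{E})$, which is admissible because $\boldsymbol{u}$ is already divergence-free.  The sign flip on the magnetic component converts $-k^{-1}\alpha(\mu_r^{-1}\boldsymbol{B},\boldsymbol{C})$ into $+k^{-1}\alpha\|\boldsymbol{B}\|_{\mu_r^{-1}}^{2}$ and the grad-div stabilisation into $+\alpha\|\nabla\cdot\boldsymbol{B}\|_{\mu_r^{-1}}^{2}$, and it forces the two curl couplings $-\alpha(\mu_r^{-1}\nabla\times\boldsymbol{E},\boldsymbol{C})$ and $-\alpha(\mu_r^{-1}\boldsymbol{B},\nabla\times\boldsymbol{F})$ to cancel exactly. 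What survives is $k^{-1}\|\boldsymbol{u}\|^{2}+Re^{-1}\|\nabla\boldsymbol{u}\|^{2}+k^{-1}\|\nabla\cdot\boldsymbol{u}\|^{2}+s\|\boldsymbol{u}\times\boldsymbol{B}^-\|_{\sigma_r}^{2}+k^{-1}\alpha\|\boldsymbol{B}\|_{\mu_r^{-1}}^{2}+\alpha\|\nabla\cdot\boldsymbol{B}\|_{\mu_r^{-1}}^{2}+s\|\boldsymbol{E}\|_{\sigma_r}^{2}$ plus one sign-indefinite remainder $2s(\sigma_r\boldsymbol{u}\times\boldsymbol{B}^-,\boldsymbol{E})$, which, unlike the $\boldsymbol{B}$-$\boldsymbol{E}$ couplings, cannot be cancelled by any sign flip.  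I would dispose of it by Young's inequality with parameter $2$, $|2s(\sigma_r\boldsymbol{u}\times\boldsymbol{B}^-,\boldsymbol{E})|\leq 2s\|\boldsymbol{u}\times\boldsymbol{B}^-\|_{\sigma_r}^{2}+\frac{1}{2}s\|\boldsymbol{E}\|_{\sigma_r}^{2}$; the second piece is absorbed by the diagonal $s\|\boldsymbol{E}\|_{\sigma_r}^{2}$, and the first by the pointwise estimate $s\|\boldsymbol{u}\times\boldsymbol{B}^-\|_{\sigma_r}^{2}\leq s\|\sqrt{\sigma_r}\boldsymbol{B}^-\|_{0,\infty}^{2}\|\boldsymbol{u}\|^{2}\leq \frac{1}{8}k^{-1}\|\boldsymbol{u}\|^{2}$ (this is precisely where $k\leq k_0$ enters), which is in turn absorbed by $k^{-1}\|\boldsymbol{u}\|^{2}$.

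The bound produced so far controls every summand of $\|\boldsymbol{\xi}\|_{\boldsymbol{X}}^{2}$ except $k\alpha\|\nabla\times\boldsymbol{E}\|_{\mu_r^{-1}}^{2}$. To recover it I augment with $\boldsymbol{\eta}_2=(\boldsymbol{0},-\delta k\nabla\times\boldsymbol{E},\boldsymbol{0})$ for a small $\delta>0$ (e.g.\ $\delta=1/2$); admissibility relies on the structure-preserving inclusion $\nabla\times\boldsymbol{V}_h^c\subset\boldsymbol{V}_h^d$, and $\nabla\cdot\nabla\times\boldsymbol{E}=0$ kills the grad-div stabilisation contribution.  A direct computation gives $\boldsymbol{a}(\boldsymbol{\xi},\boldsymbol{\eta}_2)=\delta k\alpha\|\nabla\times\boldsymbol{E}\|_{\mu_r^{-1}}^{2}+\delta\alpha(\mu_r^{-1}\boldsymbol{B},\nabla\times\boldsymbol{E})$, and the cross term is handled by the weighted Young bound $\delta\alpha|(\mu_r^{-1}\boldsymbol{B},\nabla\times\boldsymbol{E})|\leq \frac{\delta}{2}k^{-1}\alpha\|\boldsymbol{B}\|_{\mu_r^{-1}}^{2}+\frac{\delta}{2}k\alpha\|\nabla\times\boldsymbol{E}\|_{\mu_r^{-1}}^{2}$, so half of the new curl term survives while the $\boldsymbol{B}$-loss is absorbed by the surplus inherited from $\boldsymbol{\eta}_1$. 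Setting $\boldsymbol{\eta}=\boldsymbol{\eta}_1+\boldsymbol{\eta}_2$ then yields $\boldsymbol{a}(\boldsymbol{\xi},\boldsymbol{\eta})\gtrsim\|\boldsymbol{\xi}\|_{\boldsymbol{X}}^{2}$ with an explicit constant.  The matching upper bound $\|\boldsymbol{\eta}\|_{\boldsymbol{X}}\leq C\|\boldsymbol{\xi}\|_{\boldsymbol{X}}$ is straightforward: triangle inequality on the second slot, $\nabla\cdot\nabla\times=0$, and the identity $k\alpha\|\nabla\times\boldsymbol{E}\|_{\mu_r^{-1}}^{2}\leq\|\boldsymbol{E}\|_{\mathcal{H}_4}^{2}$ built into the $\mathcal{H}_4$-norm.

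The main obstacle, as expected, is the non-cancelling coupling $2s(\sigma_r\boldsymbol{u}\times\boldsymbol{B}^-,\boldsymbol{E})$: no algebraic sign choice eliminates it, so the smallness hypothesis on $k$ is indispensable, and the explicit constant $k_0=\frac{1}{8s}\|\sqrt{\sigma_r}\boldsymbol{B}^-\|_{0,\infty}^{-2}$ is chosen precisely to give the factor $1/8$ that makes the Young-type absorption succeed with independent constants.  A secondary subtlety is the reliance of $\boldsymbol{\eta}_2$ on the discrete de Rham inclusion; without this structure-preserving property one would need a discrete Hodge decomposition to peel off the divergence-free and curl-free components separately.
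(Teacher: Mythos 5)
Your proof is correct and follows essentially the same route as the paper: the paper takes the single test function $\boldsymbol{\eta}=(\boldsymbol{u},\,-\tfrac{1}{2}(\boldsymbol{B}+k\nabla\times\boldsymbol{E}),\,\boldsymbol{E})$, which is just your $\boldsymbol{\eta}_1+\boldsymbol{\eta}_2$ with the $\boldsymbol{B}$-component halved, and then applies the same Young-inequality absorption of $2s(\sigma_r\boldsymbol{u}\times\boldsymbol{B}^-,\boldsymbol{E})$ using $k\le k_0$. Your explicit symmetry argument for the second inf-sup condition is a clean justification of what the paper dismisses with ``can be proved in the same way.''
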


\begin{proof}
Take {\small $\boldsymbol{v} = \boldsymbol{u}$, $\boldsymbol{F} = \boldsymbol{E}$}, {\small $\boldsymbol{C} = - \frac{1}{2} ( \boldsymbol{B} + k \nabla \times \boldsymbol{E} )$}, then
\begin{small}
\begin{align*}
\boldsymbol{a}( \boldsymbol{\xi}, \boldsymbol{\eta} ) 
& = 
k^{-1} \Vert \boldsymbol{u} \Vert^{2}
+ Re^{-1} \Vert \nabla \boldsymbol{u} \Vert^{2}
+ k^{-1} \Vert \nabla \cdot \bm{u}\Vert^{2}
+ s \Vert \boldsymbol{E} \Vert^{2}_{\sigma_{r}}
+ \frac{\alpha k}{2} \Vert \nabla \times \boldsymbol{E} \Vert^{2}_{\mu_{r}^{-1} } \\
& + \frac{\alpha k^{-1}}{2} \Vert \boldsymbol{B} \Vert^{2}_{\mu_{r}^{-1} } 
+ 2 s ( \sigma_{r} \boldsymbol{u} \times \boldsymbol{B}^{-}, \boldsymbol{E} )
+ s \Vert \boldsymbol{u} \times \boldsymbol{B}^{-} \Vert_{\sigma_{r} }^{2}
+ \frac{\alpha}{2} \Vert \nabla \cdot \boldsymbol{B} \Vert^{2}_{\mu_{r}^{-1}}.
\end{align*}
\end{small}
Since
\begin{small}
\begin{align*}
2 ( \sigma_{r} \boldsymbol{u} \times \boldsymbol{B}^{-}, \boldsymbol{E} )
& \leq
2 \Vert \boldsymbol{E} \Vert_{\sigma_{r} } 
\Vert \boldsymbol{u} \times \boldsymbol{B}^{-} \Vert_{\sigma_{r} } 
\leq 
\frac{1}{2} \Vert \boldsymbol{E} \Vert^{2}_{\sigma_{r}}
+ 2 \Vert \boldsymbol{u} \times \boldsymbol{B}^{-} \Vert^{2}_{\sigma_{r}}.
\end{align*}
\end{small}
When {\small $k \leq k_{0}$}, we get {\small $
	s \Vert \boldsymbol{u} \times \boldsymbol{B}^{-} \Vert^{2}_{\sigma_{r}} 
	\leq \frac{1}{2} k^{-1} \Vert \boldsymbol{u} \Vert^{2}
$}. Therefore,
\begin{small}
$$
	k^{-1} \Vert \boldsymbol{u} \Vert^{2}
	- s \Vert \boldsymbol{u} \times \boldsymbol{B}^{-} \Vert^{2}_{\sigma_{r}}
	\geq \frac{1}{2} k^{-1} \Vert \boldsymbol{u} \Vert^{2}.
$$
\end{small}
Hence, {\small $
	\boldsymbol{a} ( \boldsymbol{\xi}, \boldsymbol{\eta} ) \geq \beta \Vert \boldsymbol{\xi} \Vert_{ \bm{X} }^{2}
$}. Obviously, we have {\small $\Vert \boldsymbol{\eta} \Vert_{ \bm{X} } \leq C \Vert \boldsymbol{\xi} \Vert_{ \bm{X} }$}, then the inf-sup condition holds. And neither {\small $C$} nor {\small $\beta$} depends on {\small $k$}, {\small $h$}, {\small $Rm$}, {\small $s$}, {\small $\sigma_{r}$} or {\small $\mu_{r}$}. The other inf-sup condition can be proved in the same way.

\end{proof}

Mixed formulations \eqref{eqn:problem-a0-1} and \eqref{eqn:problem-aux-1} are equivalent, and \eqref{eqn:problem-aux-1} is well-posed. As a result, we have the following well-posedness for the mixed formulation \eqref{eqn:problem-a0-1}.

\begin{theorem}\label{thm:wellposed_a0}
Given {\small $\boldsymbol{h} = ( \boldsymbol{f}, \boldsymbol{l}, \boldsymbol{r} ) \in \boldsymbol{V}^{\ast}_{h} \times [ \boldsymbol{V}^{d}_{h} ]^{\ast} \times [\boldsymbol{V}^{c}_{h} ]^{\ast} $} such that
\begin{small}
$$
	\langle \boldsymbol{l}, \boldsymbol{C} \rangle 
	= (\boldsymbol{l}_{R}, \boldsymbol{C} ), ~ 
	\forall \boldsymbol{C} \in \boldsymbol{V}^{d}
	\mbox{ for some } 
	\boldsymbol{l}_{R} \in \boldsymbol{V}^{d, 0},
$$  
\end{small}
and {\small $g \in Q_{h}^{\ast}$}, where {\small $\langle \cdot, \cdot \rangle$} is the duality pair between {\small $\boldsymbol{V}^{d}_{h}$} and {\small $[\boldsymbol{V}^{d}_{h}]^{\ast}$}, the mixed formulation \eqref{eqn:problem-a0-1} is well-posed if {\small $k \leq k_{0}$}.
\end{theorem}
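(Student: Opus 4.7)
The plan is to obtain Theorem \ref{thm:wellposed_a0} as an immediate corollary of Theorem \ref{thm:wellposed_a}, combined with the equivalence between the mixed formulations \eqref{eqn:problem-a0-1} and \eqref{eqn:problem-aux-1} that has already been established in \cite{Hu.K;Ma.Y;Xu.J.2014a} under precisely the compatibility hypothesis imposed on $\boldsymbol{h}$. Concretely, I would first apply Theorem \ref{thm:wellposed_a} to the same data $(\boldsymbol{h}, g)$ with $k\le k_0$ to produce a unique $(\boldsymbol{\xi}, p) \in \boldsymbol{X}_h \times Q_h$ solving the auxiliary problem \eqref{eqn:problem-aux-1}, together with the $\mathcal{H}_i$-norm stability bound whose constant does not depend on $h$, $k$, $Rm$, $s$, $\mu_r$, or $\sigma_r$.

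Next I would invoke the equivalence to promote this to a solution of \eqref{eqn:problem-a0-1}. The two bilinear forms differ only by the extra term $-\alpha(\mu_r^{-1}\nabla\cdot\boldsymbol{B},\nabla\cdot\boldsymbol{C})$ appearing in $\boldsymbol{a}$, so it is enough to verify that the auxiliary-problem solution satisfies the discrete divergence-free identity $\nabla\cdot\boldsymbol{B}=0$ exactly. This is the structure-preserving property of the scheme: probing the magnetic equation of \eqref{eqn:problem-aux-1} with test functions $\boldsymbol{C}\in\boldsymbol{V}_h^d$ that isolate the divergence component (made possible by the surjectivity of $\nabla\cdot:\boldsymbol{V}_h^d\to Q_h$ coming from the Raviart--Thomas/N\'ed\'elec de Rham complex), and using the hypothesis $\langle\boldsymbol{l},\boldsymbol{C}\rangle=(\boldsymbol{l}_R,\boldsymbol{C})$ with $\boldsymbol{l}_R\in\boldsymbol{V}^{d,0}$ so that the right-hand side contributes nothing in the divergence direction, forces $\nabla\cdot\boldsymbol{B}=0$. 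Once this is in hand the extra divergence penalty in $\boldsymbol{a}$ vanishes identically in $\boldsymbol{C}$, and the auxiliary solution automatically solves \eqref{eqn:problem-a0-1}.

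For uniqueness and the final stability bound, I would observe that any solution of \eqref{eqn:problem-a0-1} likewise satisfies $\nabla\cdot\boldsymbol{B}=0$ by the same divergence-probing argument applied directly to $\boldsymbol{a}_0$, so it also solves \eqref{eqn:problem-aux-1}; uniqueness from Theorem \ref{thm:wellposed_a} then transfers back. The $\mathcal{H}_i$-norm estimate passes through verbatim because the four component norms in the conclusions of Theorems \ref{thm:wellposed_a} and \ref{thm:wellposed_a0} are identical. The only non-algebraic step in this plan is the verification of the discrete divergence-free identity, which rests on the de Rham complex inclusions $\nabla\times\boldsymbol{V}_h^c\subset\boldsymbol{V}_h^{d,0}$ and the surjectivity of $\nabla\cdot$ onto $Q_h$; since this verification has already been carried out in \cite{Hu.K;Ma.Y;Xu.J.2014a}, the present proof reduces essentially to citing that equivalence and tracking the constants through Theorem \ref{thm:wellposed_a}.
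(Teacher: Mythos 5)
Your proposal is correct and follows essentially the same route as the paper: the paper's proof simply cites the equivalence of \eqref{eqn:problem-a0-1} and \eqref{eqn:problem-aux-1} under the stated compatibility condition on $\boldsymbol{h}$ (Lemma 1 and Theorem 8 of \cite{Hu.K;Ma.Y;Xu.J.2014a}) together with Theorem \ref{thm:wellposed_a}. You merely spell out the divergence-probing argument behind that equivalence, which is exactly the content of the cited results.
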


\begin{proof}
By similar argument in Lemma 1 and Theorem 8 in \cite{Hu.K;Ma.Y;Xu.J.2014a}, it is straight-forward to reach the conclusion.
\end{proof}

% preconditioner.
\section{Robust preconditioner}\label{sec:uniform_pc}

In this section, we develop robust preconditioners based on the well-posedness of the discrete MHD system. We discuss two types of preconditioners, one is norm-equivalent preconditioners, which lead to block diagonal preconditioners, and the other is FOV-equivalent preconditioners, which lead to block triangular preconditioners. Here, we follow the classification proposed in \cite{Loghin.D;Wathen.A.2004a}.

\subsection{Norm-equivalent preconditioner}
Following the notation and approaches in \cite{Mardal.K;Winther.R.2011a}, we consider a general model problem on a Hilbert space {\small $\bm{H}$}, which is equipped with an inner product {\small $(\cdot, \cdot)_{\mathcal{H}} $} and an induced norm {\small $\| x \|^2_{\mathcal{H}} = (x, x)_{\mathcal{H}} $}: Find {\small $x \in \bm{H}$} such that 
\begin{small}
\begin{equation}\label{eqn:L-weak}
L(x, y) = \langle f, y \rangle, \ \forall y \in \bm{H},
\end{equation}
\end{small}
where {\small $L(\cdot, \cdot)$} is a symmetric bilinear form, and {\small $\langle \cdot, \cdot \rangle$} is the duality pair between {\small $\bm{H}$} and {\small $\bm{H}^{\ast}$}.  We assume the model problem \eqref{eqn:L-weak} is well-posed and satisfies
\begin{small}
\begin{equation}\label{ine:L-bound}
\lvert L(x,y) \rvert \leq \beta \Vert x\Vert_{\mathcal{H}} \Vert y \Vert_{\mathcal{H}}, \ \forall x, \ y \in \bm{H},
\end{equation}
\end{small} 
and 
\begin{small}
\begin{equation}\label{ine:L-infsup}
\inf_{0 \neq x \in \bm{H} } \sup_{0 \neq y \in \bm{H}} \frac{L(x, y)}{\| x \|_{\mathcal{H}} \| y \|_{\mathcal{H}}} 
\geq \alpha > 0.
\end{equation}
\end{small}
We define an operator {\small $\mathcal{A}: \bm{H} \rightarrow \bm{H}^* $} by
\begin{small}
\begin{equation*}
\langle \mathcal{A} x, y \rangle = L(x, y), \ \forall x,\ y \in \bm{H}.
\end{equation*}
\end{small}
Hence, the operator form of~\eqref{eqn:L-weak}~is
\begin{small}
\begin{equation}\label{eqn:L-operator}
\mathcal{A}x = f.
\end{equation}
\end{small}
Assume that an SPD operator {\small $\mathcal{M}: \bm{H}^* \rightarrow \bm{H}$} is a preconditioner of this system.  Based on {\small $\mathcal{M}$}, we can define an inner product {\small $(x, y)_{\mathcal{M}^{-1}} = \langle \mathcal{M}^{-1}x, y \rangle$} on {\small $\bm{H}$}, which induces a norm {\small $\| x \|_{\mathcal{M}^{-1}}^2 = (x, x)_{\mathcal{M}^{-1}}$}. As
\begin{small}
\begin{equation*}
(\mathcal{M} \mathcal{A}x, y)_{\mathcal{M}^{-1}} = \langle \mathcal{A}x, y \rangle = L(x,y) = (x, \mathcal{M}\mathcal{A}y)_{\mathcal{M}^{-1}},
\end{equation*}
\end{small}
{\small $\mathcal{MA}: \bm{H} \rightarrow \bm{H}$} is symmetric with respect to {\small $(\cdot, \cdot)_{\mathcal{M}^{-1}}$}.  Therefore, we can use preconditioned MINRES method \cite{Paige.C;Saunders.M.1975a} to solve \eqref{eqn:L-operator} with {\small $\mathcal{M}$} as the preconditioner and {\small $(\cdot, \cdot)_{\mathcal{M}^{-1}}$} as the inner product. It is proved \cite{Greenbaum.A.1997a} that if {\small $x^{m}$} is the $m$-th iteration of MINRES method and {\small $x$} is the exact solution, then there exists a constant {\small $\delta \in (0,1)$}, only depending on the condition number {\small $\kappa(\mathcal{MA})$}, such that 
\begin{small}
\begin{equation}
\langle \mathcal{MA} (x - x^m), \mathcal{A}(x-x^m) \rangle^{1/2} \leq 2 \delta^m \langle \mathcal{MA} (x - x^0), \mathcal{A}(x-x^0) \rangle^{1/2}.
\end{equation}
\end{small}
Moreover, an estimate leads to
\begin{small}
\begin{equation*}
\delta = \frac{\kappa(\mathcal{MA}) - 1}{\kappa(\mathcal{MA}) +1}.
\end{equation*}
\end{small}
Therefore, in order to estimate the performance of the preconditioner {\small $\mathcal{M}$}, we need to estimate the condition number {\small $\kappa(\mathcal{MA})$}.  Mardal and Winther  \cite{Mardal.K;Winther.R.2011a} prove that if \eqref{ine:L-bound} and \eqref{ine:L-infsup} hold, and the operator {\small $\mathcal{M}$} satisfies
\begin{small}
\begin{equation} \label{ine:M=H}
c_1 (x, x)_{\mathcal{H}} \leq (x, x)_{\mathcal{M}^{-1}} \leq c_2 (x,x)_{\mathcal{H}}, \quad \forall \ x \in \bm{H},
\end{equation}
\end{small}
then the condition number satisfies
\begin{small}
\begin{equation*}  %\label{ine:cond-MA}
1 \leq \kappa(\mathcal{MA}) \leq \frac{c_2 \beta}{ c_1 \alpha}.
\end{equation*}
\end{small}
They also give an estimate of convergence rate
\begin{small}
\begin{equation*} %\label{ine:MIN-MA} 
\delta \leq \frac{c_2 \beta - c_1 \alpha}{c_1 \alpha + c_2 \beta}.
\end{equation*}
\end{small}
This available analysis gives important guidance on designing preconditioners for MINRES method. That is, as long as we find an SPD operator {\small $\mathcal{M}$} that satisfies the assumption \eqref{ine:M=H} with constants {\small $c_1$} and {\small $c_2$} independent of the physical and discretize parameters, we can use it to precondition MINRES and the convergence rate is uniform with respect to the discretization and physical parameters. According to \cite{Loghin.D;Wathen.A.2004a}, condition \eqref{ine:M=H} means that operator {\small $\mathcal{H}$} and {\small $\mathcal{M}^{-1}$} are \emph{norm-equivalent}, and the corresponding preconditioners are known as \emph{norm-equivalent preconditioners}. Next, we discuss how to apply such an idea to design practical and robust preconditioners for the MHD system.  

\subsubsection{Application to the MHD System}
From the discussion in the previous section, we know that based on the well-posedness of the problem, 
robust and effective norm-equivalent preconditioners can be obtained as long as the condition \eqref{ine:M=H} is satisfied. In this section, we discuss how to use this framework to design preconditioners, which satisfy this condition, for the structure-preserving schemes \eqref{eqn:problem-a0-1} and \eqref{eqn:problem-aux-1}.

{\bf Preconditioner for auxiliary scheme \eqref{eqn:problem-aux-1}.}
Following the proof of the well-posedness of the symmetric Picard linearization \eqref{eq:fully_picard1}, we first design preconditioner for the auxiliary scheme with the stabilization term {\small $( \mu_{r}^{-1} \nabla \cdot \boldsymbol{B}, \nabla \cdot \boldsymbol{C})$}.  The operator form of \eqref{eqn:problem-aux-1} is
\begin{small} 
\begin{equation}\label{eqn:A-with-stable}
\tilde{\mathcal{A} } \bm{x} = \bm{F}  \Rightarrow
\begin{pmatrix}
\mathcal{A}_{1} & -\mathrm{div}^{\ast} & 0 & \mathcal{F}^{\ast} \\
-\mathrm{div} & 0 & 0 & 0\\
0 & 0 & - \alpha ( {k^{-1}}  \mu_{r}^{-1} \mathcal{I}_{3} + \mathrm{div}^{\ast} \mu_{r}^{-1}  \mathrm{div} ) & - \alpha \mu_{r}^{-1} \mathrm{curl}\\
\mathcal{F} & 0 & -\alpha \mathrm{curl}^{\ast} \mu_{r}^{-1} & s \sigma_{r} \mathcal{I}_{4}
\end{pmatrix}
\begin{pmatrix}
\bm{u} \\ 
p \\
\bm{B}\\ 
\bm{E} 
\end{pmatrix}
= 
\begin{pmatrix}
\bm{h}_{1} \\ 
-g \\
- \bm{h}_{2} \\ 
\bm{h}_{3} 
\end{pmatrix},
\end{equation}
\end{small}
where 
\begin{small}
\begin{align*}
& \mathcal{A}_{1} \bm{u} = 
k^{-1} \bm{u} - Re^{-1} \Delta\bm{u} + k^{-1} \mathrm{div}^{\ast} \mathrm{div} \bm{u} 
- s \sigma_{r} (\bm{u}\times {\bm{B}} ^{-})\times {\bm{B}} ^{-}, 
~ \forall \bm{u}\in \bm{V}_{h}, \\
& \mathcal{F} \bm{u} =
s \sigma_{r} \bm{u} \times \bm{B}^{-},
~ \forall \bm{u}\in \bm{V}_{h}, 
\end{align*}
\end{small}
and {\small $\bm{h}_1 = \widetilde{\bm{f}}$}, {\small $\bm{h}_2 = \alpha k^{-1} \mu_r^{-1} \bm{B}^0$}, {\small $\bm{h}_3 = \bm{0}$}, and {\small $g=0$}.  Note that, we have {\small $\mathrm{div} \bm{h}_2 = 0$} and {\small $\mathcal{A}_{1} = \mathcal{H}_{1}$}.

Based on the well-posedness of the scheme (see Theorem \ref{thm:wellposed_a}), {\small $\tilde{\mathcal A}$} is an isomorphism from {\small $\bm{X}_{h} \times Q_h $} to {\small $ ( \bm{X}_{h} \times Q_h)^* $}.  Therefore, one simple choice of the preconditioner is the operator induced by the inner product {\small $(\cdot, \cdot)_{\mathcal{H}}$} which satisfies \eqref{ine:M=H} with constants {\small $c_1 = c_2 = 1$}, i.e.  
{\small $
\tilde{\mathcal{D}} = \text{diag}\left( \mathcal{H}_{1}^{-1}, \mathcal{H}_{2}^{-1}, \mathcal{H}_{3}^{-1}, \mathcal{H}_{4}^{-1} \right)
$}. This is because for the MHD system, {\small $\Vert \cdot \Vert_{\mathcal{H}}^{2}$} is defined by \eqref{eq:Hnorm}. More precisely, operator {\small $\tilde{\mathcal{D}}$} is a natural isomorphism from {\small $ ( \bm{X}_{h} \times Q_h)^* $} to {\small $\bm{X}_{h} \times Q_h $} with operator form
\begin{small}
\begin{equation}\label{eqn:B-with-stable}
\tilde{\mathcal{D}} = 
\begin{pmatrix}
\mathcal{A}_{1} & 0  & 0            & 0 \\
0				       & 	k \mathcal{I}_{2}        & 0         & 0 \\
0					       &  0           & \alpha ( k^{-1} \mu_{r}^{-1}  \mathcal{I}_{3} + \mathrm{div}^{\ast} \mu_{r}^{-1} \mathrm{div} )&  0  \\
0					       &  0 		& 0 & s \sigma_{r} \mathcal{I}_{4} + k \alpha \mathrm{curl}^{\ast} \mu_{r}^{-1} \mathrm{curl} 
\end{pmatrix}^{-1}.
\end{equation}
\end{small}
Based on previous analysis, we can directly conclude that {\small $\tilde{\mathcal{D}}$} is a norm-equivalent preconditioner for {\small $\tilde{\mathcal{A}}$}.

A direct application of preconditioner {\small $\tilde{\mathcal{D}}$} in practice can be expensive and time-consuming because we need to invert each diagonal block of {\small $\tilde{\mathcal{D}}$}.  In order to make the preconditioner more practical, we replace the diagonal blocks of {\small $\tilde{\mathcal{D}}$} by their spectral equivalent SPD approximations, i.e.
\begin{small}
\begin{equation}\label{eqn:M-with-stable}
\tilde{\mathcal{M}} = 
\mathrm{diag}\left( \mathcal{Q}_{1}, \mathcal{Q}_{2}, \mathcal{Q}_{3}, \mathcal{Q}_{4} \right),
\end{equation}
\end{small}
where {\small
\begin{equation} \label{def:Qx}
c_{2, i } \left( \mathcal{Q}_{ i }  \bm{x}, \bm{x} \right)  
 \leq  \left( \mathcal{H}_{ i }^{-1} \bm{x}, \bm{x} \right) 
 \leq c_{1, i } \left( \mathcal{Q}_{i} \bm{x} , \bm{x} \right), ~ i = 1, 2, 3, 4.
 \end{equation}
 }
In the implementation, we define {\small $\mathcal{Q}_{1}$} by several MG cycles, {\small $\mathcal{Q}_{2}$} by simple iterative methods such as Jacobi methods, and {\small $\mathcal{Q}_{3}$}, {\small $\mathcal{Q}_{4}$} by the well-known HX-preconditioner \cite{Hiptmair.R;Xu.J.2007a}.  Then {\small $\tilde{\mathcal{M}}$} satisfies the condition \eqref{ine:M=H} with {\small $c_1 = \min\{  c_{1, 1}^{-1}, c_{1, 2}^{-1}, c_{1, 3}^{-1}, c_{1, 4}^{-1} \}$} and {\small $c_2 = \max\{  c_{2, 1}^{-1}, c_{2, 2}^{-1}, c_{2, 3}^{-1}, c_{2, 4}^{-1} \}$}.  Therefore, {\small $\tilde{\mathcal{M}}$} is a norm-equivalent preconditioner for {\small $\tilde{\mathcal{A}}$}.

{\bf Preconditioner for scheme \eqref{eqn:problem-a0-1}.}
Now we consider the original structure-preserving discretization without the stabilization term {\small $( \mu_{r}^{-1} \nabla \cdot \boldsymbol{B}, \nabla \cdot \boldsymbol{C})$}.  Similarly, the operator form of \eqref{eqn:problem-a0-1} is 
\begin{small} 
\begin{equation}\label{eqn:A-without-stable}
\mathcal{A} \bm{x} = \bm{F}  \Rightarrow
\begin{pmatrix}
\mathcal{A}_{1} & -\mathrm{div}^{\ast} & 0 & \mathcal{F}^{\ast} \\
-\mathrm{div} & 0 & 0 & 0\\
0 & 0 & - \alpha {k^{-1}} \mu_{r}^{-1} \mathcal{I}_{3} & - \alpha \mu_{r}^{-1} \mathrm{curl}\\
\mathcal{F} & 0 & -\alpha \mathrm{curl}^{\ast} \mu_{r}^{-1} & s \sigma_{r} \mathcal{I}_{4}
\end{pmatrix}
\begin{pmatrix}
\bm{u} \\ 
p \\
\bm{B}\\ 
\bm{E} 
\end{pmatrix}
= 
\begin{pmatrix}
\bm{h}_{1} \\ 
-g \\
- \bm{h}_{2} \\ 
\bm{h}_{3} 
\end{pmatrix}.
\end{equation}
\end{small}

Since {\small $\tilde{\mathcal{D}}$} is a uniform preconditioner for {\small $\tilde{\mathcal{A}}$} which has the stabilization term, it turns out that we can obtain a uniform preconditioner {\small $\mathcal{D}$} for {\small $\mathcal{A}$} by removing the stabilization term in {\small $\tilde{\mathcal{D}}$}, i.e. 
\begin{small}\begin{align}
& \mathcal{D} = \mathrm{diag}\left( \mathcal{H}_{1}^{-1}, \mathcal{H}_{2}^{-1}, \left[   \alpha k^{-1} \mu_{r}^{-1} \mathcal{I}_{3} \right]^{-1}, \mathcal{H}_{4}^{-1} \right).
\label{eqn:B-no-stable}
\end{align} \end{small}
Actually, using the fact that {\small $ \mathrm{div} \ \mathrm{curl} = 0 $}, we have {\small 
$$ 
 \left[\alpha ( k^{-1} \mu_{r}^{-1} \mathcal{I}_{3} + \mathrm{div}^{\ast} \mu_{r}^{-1} \mathrm{div} ) \right]^{-1}
 (- \alpha \mu_{r}^{-1} \mathrm{curl}) = -k \mathrm{curl} =
 \left[ \alpha k^{-1} \mu_{r}^{-1} \mathcal{I}_{3} \right]^{-1}(- \alpha \mu_{r}^{-1} \mathrm{curl}).
 $$
 } 
 Therefore, {\small $\tilde{\mathcal{D}} \tilde{\mathcal{A}} = \mathcal{D} \mathcal{A}$} and we conclude that {\small $\mathcal{D}$} is a norm-equivalent preconditioner for {\small $\mathcal{A}$}. Again, using {\small $\mathcal{D}$} in practice can be expensive and time-consuming.  We can replace the diagonal blocks of {\small $\mathcal{D}$} by their spectral equivalent SPD approximations that satisfy \eqref{def:Qx}, namely,
\begin{small}
\begin{equation}\label{eqn:M-no-stable}
\mathcal{M} = 
\mathrm{diag}\left( \mathcal{Q}_{1}, \mathcal{Q}_{2}, \mathcal{Q}_{3}, \mathcal{Q}_{4} \right).
\end{equation}
\end{small}
It is easy to see that {\small $\mathcal{M}$} is a norm-equivalent preconditioner for {\small $\mathcal{A}$}.

{\bf A divergence-free preserving iterative process.}
One important feature of the structure preserving discretization is that it preserves the Gauss's law of magnetic field exactly on the discrete level. Therefore, when designing preconditioners and iterative methods, we would like to inherit this property as well.  We now discuss how to preserve this property mainly for scheme \eqref{eqn:problem-a0-1} and briefly for scheme \eqref{eqn:problem-aux-1} briefly.

First, we consider a simple linear iteration based on operator {\small $\mathcal{D}$}:
\begin{small}
\begin{equation} \label{eqn:linear-iter-B}
\begin{pmatrix}
\bm{u}^{l+1} \\
p^{l+1} \\
\bm{B}^{l+1}  \\
\bm{E}^{l+1} 
\end{pmatrix}
= 
\begin{pmatrix}
\bm{u}^{l} \\
p^{l} \\
\bm{B}^{l}  \\
\bm{E}^{l}
\end{pmatrix}
+
\begin{pmatrix}
\bm{e}_{1} \\
\bm{e}_{2} \\
\bm{e}_{3} \\
\bm{e}_{4}  
\end{pmatrix},\quad
\begin{pmatrix}
\bm{e}_{1} \\
\bm{e}_{2}\\
\bm{e}_{3} \\
\bm{e}_{4} 
\end{pmatrix} = 
\mathcal{D} \left[  
\begin{pmatrix}
\bm{h}_{1} \\
-g \\ 
- \bm{h}_{2} \\ 
\bm{h}_{3} 
\end{pmatrix} - 
\mathcal{A}
\begin{pmatrix}
\bm{u}^{l} \\
p^{l} \\
\bm{B}^{l}  \\
\bm{E}^{l} 
\end{pmatrix}
\right].
\end{equation}
\end{small}
It is easy to check that, if {\small $\mathrm{div} \bm{B}^{l} = 0$} and {\small $\mathrm{div} \bm{h}_2 = 0$}, then {\small $\mathrm{div} \bm{e}_{3} = 0$}. Therefore, {\small $\mathrm{div} \bm{B}^{l+1} = 0$}. Namely, the linear iterative scheme based on {\small $\mathcal{B}$} preserves the divergence-free condition at each iteration.  

Now we consider the preconditioned MINRES method with {\small $\mathcal{D}$} as a preconditioner. The following theorem states that if the initial guess satisfies the divergence-free condition exactly, the solutions of all the iterations also satisfy this condition exactly.  

\begin{theorem} \label{thm:div-free-B}
If the initial guess {\small $\bm{x}^0 = (\bm{u}^{0}, p^{0}, \bm{B}^{0},  \bm{E}^{0})^T, $} satisfies the divergence-free condition exactly, i.e. {\small $\mathrm{div} \bm{B}^0 = 0$}, and {\small $\mathrm{div} \bm{h}_2 = 0$}, then all the iterations {\small $\bm{x}^l = (\bm{u}^{l}, p^l, \bm{B}^{l}, \bm{E}^{l})^T$} of the preconditioned MINRES method with preconditioner {\small $\mathcal{B}$} satisfy the divergence-free condition exactly, i.e. {\small $\mathrm{div} \bm{B}^l = 0$}.
\end{theorem}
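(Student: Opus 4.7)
The plan is to exploit the Krylov subspace characterization of preconditioned MINRES. The $\ell$-th iterate satisfies
\[
\bm{x}^l \in \bm{x}^0 + \mathcal{K}_l(\mathcal{D}\mathcal{A}, \mathcal{D}\bm{r}^0) = \bm{x}^0 + \mathrm{span}\bigl\{(\mathcal{D}\mathcal{A})^j \mathcal{D}\bm{r}^0 : 0 \leq j \leq l-1\bigr\},
\]
where $\bm{r}^0 = \bm{F} - \mathcal{A}\bm{x}^0$. By linearity of the discrete divergence applied to the third block, $\mathrm{div}\bm{B}^l$ becomes a linear combination of $\mathrm{div}\bm{B}^0$ and the third-block divergences of the Krylov basis vectors. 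Since $\mathrm{div}\bm{B}^0 = 0$ by assumption, it suffices to prove $\mathrm{div}[(\mathcal{D}\mathcal{A})^j \mathcal{D}\bm{r}^0]_3 = 0$ for every $j \geq 0$, which I will do by induction.

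The inductive step reduces to an invariance lemma: for any $\bm{y} = (\bm{u}, p, \bm{B}, \bm{E})^T$ with $\mathrm{div}\bm{B} = 0$, the third block of $\mathcal{D}\mathcal{A}\bm{y}$ is also divergence-free. This follows from exactly the calculation already carried out for the Richardson iteration \eqref{eqn:linear-iter-B}: the third row of $\mathcal{A}$ in \eqref{eqn:A-without-stable} returns $-\alpha k^{-1}\mu_r^{-1}\bm{B} - \alpha\mu_r^{-1}\mathrm{curl}\,\bm{E}$, and composing with the third block $[\alpha k^{-1}\mu_r^{-1}\mathcal{I}_3]^{-1}$ of $\mathcal{D}$ in \eqref{eqn:B-no-stable} lets the weight $\mu_r^{-1}$ cancel cleanly to yield $-\bm{B} - k\,\mathrm{curl}\,\bm{E}$. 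Taking divergence and invoking the exact sequence property $\mathrm{div}\,\mathrm{curl}=0$ of the structure-preserving complex $\bm{V}_h^c \xrightarrow{\mathrm{curl}} \bm{V}_h^d \xrightarrow{\mathrm{div}} Q_h$ gives zero. The base case $\mathrm{div}[\mathcal{D}\bm{r}^0]_3 = 0$ is handled in parallel: the third component of $\bm{r}^0$ equals $-\bm{h}_2 + \alpha k^{-1}\mu_r^{-1}\bm{B}^0 + \alpha\mu_r^{-1}\mathrm{curl}\,\bm{E}^0$, whose image under $[\alpha k^{-1}\mu_r^{-1}\mathcal{I}_3]^{-1}$ is $-[\alpha k^{-1}\mu_r^{-1}\mathcal{I}_3]^{-1}\bm{h}_2 + \bm{B}^0 + k\,\mathrm{curl}\,\bm{E}^0$; the three summands have zero divergence by the hypotheses $\mathrm{div}\bm{h}_2 = 0$, $\mathrm{div}\bm{B}^0 = 0$, and the identity $\mathrm{div}\,\mathrm{curl} = 0$, respectively.

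The main delicacy, and also the place where the hypothesis $\mathrm{div}\bm{h}_2 = 0$ is consumed, is the interaction of the weighted mass inverse $[\alpha k^{-1}\mu_r^{-1}\mathcal{I}_3]^{-1}$ with the divergence operator. As a weighted $L^2$ projection on $\bm{V}_h^d$, this block does not commute with $\mathrm{div}$ for non-constant $\mu_r$; however, when composed with the third row of $\mathcal{A}$, the weight cancels, and when it acts on a load satisfying $\mathrm{div}\bm{h}_2=0$ (interpreted through the $[\bm{V}_h^d]^*$-duality identification, consistent with the framework of Theorem \ref{thm:wellposed_a0} requiring $\bm{l}_R\in \bm{V}^{d,0}$), the image lies in $\bm{V}_h^{d,0}$. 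Once this matching between dual pairing and $\bm{V}_h^{d,0}$ structure is fixed, the Krylov expansion together with the linearity of $\mathrm{div}$ closes the argument and yields $\mathrm{div}\bm{B}^l = 0$ for all $l$.
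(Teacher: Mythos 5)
Your proposal is correct and follows essentially the same route as the paper's proof: the Krylov-subspace characterization of the MINRES iterate, the key cancellation $[\alpha k^{-1}\mu_r^{-1}\mathcal{I}_3]^{-1}(-\alpha k^{-1}\mu_r^{-1}\bm{B}-\alpha\mu_r^{-1}\mathrm{curl}\,\bm{E})=-\bm{B}-k\,\mathrm{curl}\,\bm{E}$, induction over the Krylov basis vectors via $\mathrm{div}\,\mathrm{curl}=0$, and linearity of $\mathrm{div}$ to pass to the linear combination. Your treatment of the base case (the action of the weighted mass inverse on $\bm{h}_2$ and the precise meaning of $\mathrm{div}\,\bm{h}_2=0$) is in fact somewhat more careful than the paper's, which simply asserts $\mathrm{div}\,\bm{r}_3^0=0$ "by its definition."
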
 

\begin{proof}
According to the definition of preconditioned MINRES method with preconditioner {\small $\mathcal{D}$}, we know that
\begin{small}
\begin{equation}\label{def:solu-krylov}
\bm{x}^l \in \bm{x}^0 + \mathcal{K}^l(\mathcal{D} \mathcal{A}, \bm{r}^0), 
\end{equation}
\end{small}
where {\small 
$$
\mathcal{K}^l(\mathcal{DA}, \bm{r}^0) = \text{span}\{ \bm{r}^0, \mathcal{DA}\bm{r}^0,  (\mathcal{DA})^2\bm{r}^0, \cdots, (\mathcal{DA})^{l-1}\bm{r}^0 \},
$$} 
with {\small $\bm{r}^0 = \bm{F} - \mathcal{DA} \bm{x}^0 $}, {\small $\bm{r}^0 = ( \bm{r}_{1}^{0},  \bm{r}_{2}^{0}, \bm{r}_{3}^{0},  \bm{r}_{4}^{0} )^T$}.  Note that {\small $\mathrm{div}\bm{r}_{3}^{0} = 0$} by its definition.  

Now, we denote {\small $(\mathcal{DA})^{m} \bm{r}^0$} by {\small $\bm{v}^{m} =  ( \bm{v}_{1}^{m}, \bm{v}_{2}^{m} , \bm{v}_{3}^{m},  \bm{v}_{4}^{m} )^T$} for {\small $m =  0, 1, 2, \cdots, l-1$}.  Since {\small $\bm{v}^m = \mathcal{DA} \bm{v}^{m-1}$} and 
\begin{small}
\begin{align} 
\bm{v}_{3}^{m} 
& = (\alpha k^{-1} \mu_{r}^{-1} \mathcal{I}_{3})^{-1} 
\left( - {\alpha}{k}^{-1} \mu_{r}^{-1} \mathcal{I}_{3} \bm{v}_{3}^{m-1}  
-\alpha \mu_{r}^{-1} \mathrm{curl} \bm{v}_{4}^{m-1} \right) 
\nonumber\\
& = - \bm{v}_{3}^{m-1} - k \mathrm{curl}\bm{v}_{4}^{ m-1},
\label{eqn:vBm}
\end{align}
\end{small}
{\small $\mathrm{div}\bm{v}_{3}^{m} = 0$} if {\small $\mathrm{div}\bm{v}_{3}^{m-1} = 0$}. Noticing that {\small $\mathrm{div}\bm{v}_{3}^{0} = \mathrm{div}\bm{r}_{3}^{0} = 0$}, by mathematical induction, we have {\small $\mathrm{div}\bm{v}_{3}^{m} = 0$}.

Finally, due to the fact \eqref{def:solu-krylov}, {\small $\bm{x}^{l}$} is a linear combination of {\small $\bm{v}^{m}$}, {\small $m = 0,1, \cdots, l-1$} and then {\small $\bm{B}^{l}$} is a linear combination of {\small $\bm{v}_{3}^{m}$}, {\small $m = 0,1, \cdots, l-1$}.  Because {\small $\mathrm{div}\bm{v}_{3}^{m} = 0$}, {\small $m=0,1,\cdots,l-1$}, we can conclude that {\small $\mathrm{div} \bm{B}^{l} = 0$}.
\end{proof}

Although using {\small $\mathcal{D}$} as a preconditioner preserves the divergence-free condition exactly, in general, it is not true when we use operator {\small $\mathcal{M}$} defined by \eqref{eqn:M-no-stable} as preconditioner.  In order to preserve this property, we need to use {\small $\mathcal{Q}_{3} = \left[ \alpha k^{-1} \mu_{r}^{-1} \mathcal{I}_{3} \right]^{-1}$} in the preconditioner {\small $\mathcal{M}$}, namely,
\begin{small}
\begin{equation}\label{eqn:M-div-free}
\mathcal{M} = 
\mathrm{diag} \left(
\mathcal{Q}_{1}, \mathcal{Q}_{2}, \left[ \alpha k^{-1} \mu_{r}^{-1} \mathcal{I}_{3} \right]^{-1}, \mathcal{Q}_{4} 
\right).
\end{equation}
\end{small}
\begin{remark}\label{remark:divB_free}
In terms of implementation, note that, for the magnetic field in {\small $\bm{v}^m = \mathcal{MA} \bm{v}^{m-1}$}, we still have \eqref{eqn:vBm}, which can be used to update {\small $\bm{v}_{3}^{m}$} at each preconditioning step without inverting the mass matrix.
\end{remark}

The following theorem states that if the initial guess satisfies the divergence-free condition exactly, then the solutions of all the iterations satisfy this condition exactly when {\small $\mathcal{M}$} defined in \eqref{eqn:M-div-free} is used as a preconditioner.

\begin{theorem} \label{thm:dive-free-M}
If the initial guess {\small $\bm{x}^0 = (\bm{u}^{0}, p^{0}, \bm{B}^{0}, \bm{E}^{0} )^T, $} satisfies the divergence-free condition exactly, i.e. {\small $\mathrm{div} \bm{B}^0 = 0$} and {\small $\mathrm{div} \bm{h}_2 = 0$}, then all the iterations {\small $\bm{x}^l = (\bm{u}^{l}, p^l, \bm{B}^{l},  \bm{E}^{l})^T$} of the preconditioned MINRES method with preconditioner {\small $\mathcal{M}$} defined in \eqref{eqn:M-div-free} satisfy the divergence-free condition exactly, i.e.  {\small $\mathrm{div} \bm{B}^l = \bm{0}$}.\end{theorem}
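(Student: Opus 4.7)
The plan is to imitate the proof of Theorem \ref{thm:div-free-B} almost verbatim, exploiting the fact that in the preconditioner $\mathcal{M}$ defined in \eqref{eqn:M-div-free} the third diagonal block is chosen to be \emph{exactly} $[\alpha k^{-1}\mu_r^{-1}\mathcal{I}_3]^{-1}$ rather than a spectrally equivalent surrogate. This is the single ingredient that makes the earlier algebra carry over; if we had replaced this block by a generic $\mathcal{Q}_3$ approximating $\mathcal{H}_3^{-1}$ (as in \eqref{eqn:M-no-stable}), the structure-preserving cancellation would be lost, so this is what must be used.

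First, I would write $\bm{x}^l \in \bm{x}^0 + \mathcal{K}^l(\mathcal{MA}, \bm{r}^0)$, with $\bm{r}^0 = \bm{F} - \mathcal{MA}\bm{x}^0 = (\bm{r}_1^0,\bm{r}_2^0,\bm{r}_3^0,\bm{r}_4^0)^T$. Using the explicit block form of $\mathcal{A}$ in \eqref{eqn:A-without-stable} and $\mathcal{M}$ in \eqref{eqn:M-div-free}, compute the third component of $\bm{r}^0$ directly. Applying $\mathrm{div}$ and invoking $\mathrm{div}\,\mathrm{curl} = 0$ together with the hypotheses $\mathrm{div}\bm{B}^0 = 0$ and $\mathrm{div}\bm{h}_2 = 0$ gives $\mathrm{div}\bm{r}_3^0 = 0$.

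Next, set $\bm{v}^m = (\mathcal{MA})^m \bm{r}^0$ for $m = 0,1,\dots,l-1$. Because $\mathcal{Q}_3 = [\alpha k^{-1}\mu_r^{-1}\mathcal{I}_3]^{-1}$ holds exactly, the derivation leading to \eqref{eqn:vBm} in the proof of Theorem \ref{thm:div-free-B} applies without change, yielding
$$
\bm{v}_3^m = -\bm{v}_3^{m-1} - k\,\mathrm{curl}\,\bm{v}_4^{m-1}.
$$
Taking the divergence and using $\mathrm{div}\,\mathrm{curl} = 0$ shows $\mathrm{div}\bm{v}_3^m = -\mathrm{div}\bm{v}_3^{m-1}$, so by induction from the base case $\mathrm{div}\bm{v}_3^0 = \mathrm{div}\bm{r}_3^0 = 0$ we obtain $\mathrm{div}\bm{v}_3^m = 0$ for all $m$.

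Finally, since $\bm{x}^l$ is an affine combination of $\bm{x}^0$ and the vectors $\bm{v}^0,\dots,\bm{v}^{l-1}$, its third component $\bm{B}^l$ is an affine combination of $\bm{B}^0$ (divergence-free by assumption) and the $\bm{v}_3^m$ (divergence-free by the previous step); hence $\mathrm{div}\bm{B}^l = 0$. The only potential obstacle is ensuring that the preconditioner is applied in the form \eqref{eqn:M-div-free}, which is precisely the content of Remark \ref{remark:divB_free}: in practice $\bm{v}_3^m$ is updated by the closed-form recursion above rather than by inverting a mass matrix, so no approximation spoils the exact cancellation.
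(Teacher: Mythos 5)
Your proposal is correct and follows essentially the same route as the paper, which simply declares the proof to be that of Theorem \ref{thm:div-free-B} with $\mathcal{D}$ replaced by $\mathcal{M}$; the key observation you isolate --- that keeping the third diagonal block exactly equal to $\left[\alpha k^{-1}\mu_r^{-1}\mathcal{I}_3\right]^{-1}$ preserves the recursion \eqref{eqn:vBm} --- is precisely the point the authors make in Remark \ref{remark:divB_free}. Your handling of the final step (writing $\bm{B}^l$ as $\bm{B}^0$ plus a linear combination of the divergence-free $\bm{v}_3^m$) is in fact slightly more careful than the paper's wording.
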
 

\begin{proof}
The proof is the same as the proof of Theorem \ref{thm:div-free-B} with $\mathcal{D}$ replaced by $\mathcal{M}$.
\end{proof}

Next we consider the linear system {\small $\tilde{\mathcal{A} } \bm{x} = \bm{F} $ }. If we use {\small $\tilde{\mathcal{D}}$} as the preconditioner, the solutions of all the iterations satisfy the divergence-free condition exactly if the initial guess is divergence-free. The argument is exactly the same with that of Theorem \ref{thm:div-free-B} except that  \eqref{eqn:vBm} is replaced by
 \begin{small}
 \begin{equation}
 \bm{v}_{3}^{m}   =   \mathcal{H}_{3}^{-1} 
 \left(  - \mathcal{H}_{3} \bm{v}_{3}^{ m-1}   
 - \alpha \mu_{r}^{-1} \mathrm{curl} \bm{v}_{4}^{ m-1} \right) 
 = - \bm{v}_{3}^{m-1}  - k \mathrm{curl}\bm{v}_{4}^{ m-1}.
 \label{eqn:vBm_divdiv}
 \end{equation}
 \end{small}
And if we use {\small $\tilde{\mathcal{M}}$}, with {\small $\mathcal{Q}_{3} = \mathcal{H}_{3}^{-1} = \left( \alpha ( k^{-1} \mu_{r}^{-1} \mathcal{I}_{3} + \mathrm{div}^{\ast} \mu_{r}^{-1} \mathrm{div} ) \right)^{-1}$}, as the preconditioner, i.e. 
\begin{small}
\begin{equation}\label{eqn:tildeM-div-free}
\tilde{\mathcal{M}} = 
\mathrm{diag}\left(
\mathcal{Q}_{1}, 
\mathcal{Q}_{2}, 
\mathcal{H}_{3}^{-1}, 
\mathcal{Q}_{4}
\right),
\end{equation}
\end{small}
all the iterations satisfy the divergence-free condition exactly as long as the initial guess is divergence-free.

% lower preconditioner.
\subsection{FOV-equivalent preconditioner}
In this section, we recall the abstract framework for designing the \emph{FOV-equivalent preconditioners}, following \cite{Loghin.D;Wathen.A.2004a}. FOV-equivalent preconditioners are not necessary to be SPD, which makes it more general than norm-equivalent preconditioners. For example, we can design block triangular preconditioners, and use them to precondition the GMRES method.

Consider the model problem \eqref{eqn:L-operator}, now {\small $\mathcal{A}$} is not necessary to be symmetric.  We use a general operator {\small $\mathcal{M}_{\mathcal{L}}: \bm{H}^{\ast} \rightarrow \bm{H}$} to denote the preconditioner. Based on the inner product {\small $(\cdot, \cdot)_{\mathcal{M}^{-1}}$} and the norm {\small $\Vert \cdot \Vert_{\mathcal{M}^{-1}}$}, we can estimate the convergence rate of the preconditioned GMRES. It is proved \cite{Elman.H.1982a,Saad.Y.1996a} that if {\small $x^m$} is the {\small $m$}-iteration of GMRES method and {\small $x$} is the exact solution, then
\begin{small}
$$
	\frac{ \Vert \mathcal{M}_{\mathcal{L}} \mathcal{A}(x - x^m) \Vert_{\mathcal{M}^{-1}} }{ \Vert \mathcal{M}_{\mathcal{L}} \mathcal{A}(x- x^{0}) \Vert_{\mathcal{M}^{-1}} } 
	\leq
	\left( 1 - \frac{\gamma^{2}}{\Gamma^{2}} \right)^{m/2},
$$
\end{small}
where
\begin{small}
\begin{equation} \label{def:FOV}
	\gamma \leq \frac{ (x, \mathcal{M}_{\mathcal{L}} \mathcal{A} x )_{\mathcal{M}^{-1}} }{(x, x)_{\mathcal{M}^{-1}} }, 
	\quad
	\frac{ \Vert \mathcal{M}_{\mathcal{L}} \mathcal{A} x \Vert_{\mathcal{M}^{-1}} }{\Vert x \Vert_{\mathcal{M}^{-1}} } \leq \Gamma.
\end{equation}
\end{small}
According to the theory, we conclude that as long as we find an operator {\small $\mathcal{M}_{\mathcal{L}}$} and a proper inner product {\small $(\cdot, \cdot)_{\mathcal{M}^{-1}}$} such that condition \eqref{def:FOV} is satisfied with constants {\small $\gamma$} and {\small $\Gamma$} independent of the physical and discretize parameters, {\small $\mathcal{M}_{\mathcal{L}}$} is a uniform preconditioner for the GMRES method. Such preconditioners is usually referred to as FOV-equivalent preconditioners.  

% Abstract framework for FOV-equivalent preconditioners.
To give a flavor of the analysis of FOV-equivalent preconditioners, we first demonstrate the analysis with a $2$-by-$2$ block system. Application to the MHD system is discussed later. Assume that {\small $\mathcal{A} \bm{x} = \bm{F}$} is  in the form
\begin{small}
\begin{align}
& \begin{pmatrix}
\mathcal{A}_{1} & -\mathcal{B}^{\ast} \\
\mathcal{B} & 0
\end{pmatrix} \begin{pmatrix}
\bm{x}_{1} \\
\bm{x}_{2} 
\end{pmatrix} = \begin{pmatrix}
\bm{F}_{1} \\
\bm{F}_{2} 
\end{pmatrix}, \label{prob:toy_model}
\end{align}
\end{small}
where {\small $\mathcal{A}_{1}$} is a SPD operator. Based on the partition of the system, we assume that a splitting of the Hilbert space {\small $\bm{H}$} is {\small $\bm{H}_{1} \times \bm{H}_{2}$} such that {\small $\bm{x}_{1} \in \bm{H}_{1}$}, {\small $\bm{x}_{2} \in \bm{H}_{2}$}. Assume that the problem \eqref{prob:toy_model} is well-posed with respect to norm {\small $\Vert \cdot \Vert_{ \mathcal{M}^{-1} }$}, which is induced by {\small $\mathcal{M} = \mathrm{diag} \left( \mathcal{H}_{1}, \mathcal{H}_{2} \right)^{-1}$}. And we further assume that {\small $\mathcal{H}_{1} = \mathcal{A}_{1}$}. Therefore, the well-posedness implies that there exists a constant {\small $\zeta > 0$}, independent of physical and discretization parameters (depending on the problem) such that
\begin{small}
\begin{align}
& \inf \limits_{ \bm{x}_{2} \in \bm{H}_{2} } \sup \limits_{ \bm{x}_{1} \in \bm{H}_{1} }
\frac{ ( \mathcal{B} \bm{x}_{1}, \bm{x}_{2} ) }
{ \Vert \bm{x}_{1} \Vert_{ \mathcal{A}_{1} } \Vert \bm{x}_{2} \Vert_{ \mathcal{H}_{2} } }
\geq \zeta > 0. \label{eq:B_infsup}
\end{align}
\end{small}
\begin{theorem}\label{thm:toy_model_exact_solve}
If the condition \eqref{eq:B_infsup} holds, there exist constants {\small $\gamma$} and {\small $\Gamma$} such that for all {\small $\bm{x} \neq \bm{0}$}, the operator {\small $\mathcal{A}$} defined in \eqref{prob:toy_model} and the operator 
\begin{small}
\begin{align*}
& \mathcal{M}_{\mathcal{L}} = \begin{pmatrix}
\mathcal{A}_{1} & 0 \\
\mathcal{B} & \mathcal{H}_{2}
\end{pmatrix}^{-1}
\end{align*}
\end{small}
satisfy condition \eqref{def:FOV} with the norm {\small $\Vert \cdot \Vert_{ \mathcal{M}^{-1} }$} induced by {\small $\mathcal{M} = \mathrm{diag} \left( \mathcal{A}_{1}, \mathcal{H}_{2} \right)^{-1}$}.
\end{theorem}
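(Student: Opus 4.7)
The plan is to verify the two inequalities in \eqref{def:FOV} by direct computation, exploiting the triangular structure of $\mathcal{M}_{\mathcal{L}}$, the inf-sup condition \eqref{eq:B_infsup}, and the continuity of $\mathcal{B}$ implied by the well-posedness of \eqref{prob:toy_model}. First, I multiply out $\mathcal{M}_{\mathcal{L}} \mathcal{A}$: the fact that the first block row of $\mathcal{M}_{\mathcal{L}}^{-1}$ coincides with the first block row of $\mathcal{A}$ produces the block upper triangular form
\[
\mathcal{M}_{\mathcal{L}} \mathcal{A} = \begin{pmatrix} \mathcal{I} & -\mathcal{A}_{1}^{-1} \mathcal{B}^{\ast} \\ 0 & \mathcal{H}_{2}^{-1} \mathcal{B} \mathcal{A}_{1}^{-1} \mathcal{B}^{\ast} \end{pmatrix}.
\]
Since $(\bm{x}, \bm{y})_{\mathcal{M}^{-1}} = (\mathcal{A}_{1} \bm{x}_{1}, \bm{y}_{1}) + (\mathcal{H}_{2} \bm{x}_{2}, \bm{y}_{2})$, a short calculation yields
\[
(\bm{x}, \mathcal{M}_{\mathcal{L}} \mathcal{A} \bm{x})_{\mathcal{M}^{-1}} = \Vert \bm{x}_{1} \Vert_{\mathcal{A}_{1}}^{2} - (\mathcal{B}^{\ast} \bm{x}_{2}, \bm{x}_{1}) + \Vert \mathcal{B}^{\ast} \bm{x}_{2} \Vert_{\mathcal{A}_{1}^{-1}}^{2}.
\]

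For the lower bound on $\gamma$ I apply Cauchy--Schwarz and Young's inequality with equal weights to the indefinite cross term, giving $-(\mathcal{B}^{\ast} \bm{x}_{2}, \bm{x}_{1}) \geq -\tfrac{1}{2}\Vert \bm{x}_{1} \Vert_{\mathcal{A}_{1}}^{2} - \tfrac{1}{2}\Vert \mathcal{B}^{\ast} \bm{x}_{2} \Vert_{\mathcal{A}_{1}^{-1}}^{2}$, hence
\[
(\bm{x}, \mathcal{M}_{\mathcal{L}} \mathcal{A} \bm{x})_{\mathcal{M}^{-1}} \geq \tfrac{1}{2}\Vert \bm{x}_{1} \Vert_{\mathcal{A}_{1}}^{2} + \tfrac{1}{2}\Vert \mathcal{B}^{\ast} \bm{x}_{2} \Vert_{\mathcal{A}_{1}^{-1}}^{2}.
\]
Rewriting the inf-sup condition \eqref{eq:B_infsup} as $\Vert \mathcal{B}^{\ast} \bm{x}_{2} \Vert_{\mathcal{A}_{1}^{-1}} \geq \zeta \Vert \bm{x}_{2} \Vert_{\mathcal{H}_{2}}$ and combining, one obtains $(\bm{x}, \mathcal{M}_{\mathcal{L}} \mathcal{A} \bm{x})_{\mathcal{M}^{-1}} \geq \tfrac{1}{2}\min(1, \zeta^{2})\, (\bm{x}, \bm{x})_{\mathcal{M}^{-1}}$, so $\gamma = \tfrac{1}{2}\min(1, \zeta^{2})$ is independent of the problem parameters.

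For the upper bound $\Gamma$, the well-posedness of \eqref{prob:toy_model} delivers a continuity constant $\beta$ for $\mathcal{B}$, equivalently $\Vert \mathcal{B}^{\ast} \bm{x}_{2} \Vert_{\mathcal{A}_{1}^{-1}} \leq \beta \Vert \bm{x}_{2} \Vert_{\mathcal{H}_{2}}$. Writing $\Vert \mathcal{M}_{\mathcal{L}} \mathcal{A} \bm{x} \Vert_{\mathcal{M}^{-1}}^{2} = \Vert \bm{x}_{1} - \mathcal{A}_{1}^{-1} \mathcal{B}^{\ast} \bm{x}_{2} \Vert_{\mathcal{A}_{1}}^{2} + \Vert \mathcal{H}_{2}^{-1} \mathcal{B} \mathcal{A}_{1}^{-1} \mathcal{B}^{\ast} \bm{x}_{2} \Vert_{\mathcal{H}_{2}}^{2}$, the triangle inequality handles the first summand, while a duality argument that invokes the continuity bound twice shows that the second summand is at most $\beta^{4}\Vert \bm{x}_{2} \Vert_{\mathcal{H}_{2}}^{2}$. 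Putting these together yields $\Gamma^{2} \leq 2 + 2\beta^{2} + \beta^{4}$, which depends only on $\beta$.

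The main obstacle is the lower bound: the cross term $-(\mathcal{B}^{\ast} \bm{x}_{2}, \bm{x}_{1})$ is sign-indefinite, and absorbing it requires the quadratic $\Vert \mathcal{B}^{\ast} \bm{x}_{2} \Vert_{\mathcal{A}_{1}^{-1}}^{2}$ that is produced precisely by the triangular choice of $\mathcal{M}_{\mathcal{L}}$. The central observation, which is the paper's improvement over \cite{Loghin.D;Wathen.A.2004a}, is that measuring $\bm{x}_{2}$ in the natural norm induced by $\mathcal{H}_{2}$, without any artificial scaling factor in front of the $(2,2)$-block, makes the equal-weight Young's inequality admissible and keeps $\gamma$ uniform in the physical and discretization parameters, so no scaling parameter must be tuned in practice.
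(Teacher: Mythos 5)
Your proposal is correct and follows essentially the same route as the paper: the same computation of $\mathcal{M}_{\mathcal{L}}\mathcal{A}$ and of $(\bm{x},\mathcal{M}_{\mathcal{L}}\mathcal{A}\bm{x})_{\mathcal{M}^{-1}}$, with the cross term absorbed by an equal-weight Young's inequality (which yields exactly the constant $\gamma_0=\tfrac12$ that the paper obtains as the smallest eigenvalue of its $2\times 2$ matrix $\bigl(\begin{smallmatrix}1 & -1/2\\ -1/2 & 1\end{smallmatrix}\bigr)$) and the inf-sup condition converting $\Vert\mathcal{B}^{\ast}\bm{x}_2\Vert_{\mathcal{A}_1^{-1}}$ into $\zeta\Vert\bm{x}_2\Vert_{\mathcal{H}_2}$. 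Your upper-bound argument is slightly more explicit than the paper's one-line appeal to boundedness, but the substance is identical.
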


\begin{proof}
By simple computation, we get
\begin{small}
\begin{align*}
& \mathcal{M}_{\mathcal{L}} \mathcal{A} = \begin{pmatrix}
\mathcal{I}_{1} & - \mathcal{A}_{1}^{-1} \mathcal{B}^{\ast} \\
0 & \mathcal{H}_{2}^{-1} \mathcal{B} \mathcal{A}_{1}^{-1} \mathcal{B}^{\ast}
\end{pmatrix}.
\end{align*}
\end{small}
Then for any {\small $\bm{x} = ( \bm{x}_{1}, \bm{x}_{2} )^{T}$}, we have
\begin{small}
\begin{align*}
( \bm{x}, \mathcal{M}_{\mathcal{L}} \mathcal{A} \bm{x}  )_{ \mathcal{M}^{-1} }
& = ( \bm{x}_{1} - \mathcal{A}_{1}^{-1} \mathcal{B}^{\ast} \bm{x}_{2}, \bm{x}_{1} )_{ \mathcal{A}_{1} }
+ ( \mathcal{B} \mathcal{A}_{1}^{-1} \mathcal{B}^{\ast} \bm{x}_{2}, \bm{x}_{2} ) \\
& = \Vert \bm{x}_{1} \Vert_{ \mathcal{A}_{1} }^{2} 
- ( \mathcal{B}^{\ast} \bm{x}_{2}, \bm{x}_{1} )
+ \Vert \mathcal{B}^{\ast} \bm{x}_{2} \Vert_{ \mathcal{A}_{1}^{-1} }^{2} \\
& \geq
\Vert \bm{x}_{1} \Vert_{ \mathcal{A}_{1} }^{2} 
- \Vert \bm{x}_{1} \Vert_{ \mathcal{A}_{1} } 
\Vert \mathcal{B}^{\ast} \bm{x}_{2} \Vert_{ \mathcal{A}_{1}^{-1} }
+ \Vert \mathcal{B}^{\ast} \bm{x}_{2} \Vert_{ \mathcal{A}_{1}^{-1} }^{2} \\
& = 
\begin{pmatrix}
\xi_{1} \\
\xi_{2}
\end{pmatrix}^{T}
\begin{pmatrix}
1 & -1/2 \\
-1/2 & 1
\end{pmatrix} 
\begin{pmatrix}
\xi_{1} \\
\xi_{2}
\end{pmatrix},
\end{align*}
\end{small}
where {\small $\xi_{1} = \Vert \bm{x}_{1} \Vert_{ \mathcal{A}_{1} }$}, {\small $\xi_{2} = \Vert \mathcal{B}^{\ast} \bm{x}_{2} \Vert_{ \mathcal{A}_{1}^{-1} }$}. Since the matrix in the middle is SPD, there exists {\small $\gamma_{0} > 0$} such that
\begin{small}
\begin{align*}
& ( \bm{x},  \mathcal{M}_{\mathcal{L}} \mathcal{A} \bm{x}  )_{ \mathcal{M}^{-1} } 
\geq \gamma_{0} \left( \Vert \bm{x}_{1} \Vert_{ \mathcal{A}_{1} }^{2}
+ \Vert \mathcal{B}^{\ast} \bm{x}_{2} \Vert_{ \mathcal{A}_{1}^{-1} }^{2} \right).
\end{align*}
\end{small}
Moreover,
\begin{small}
\begin{align*}
& \Vert \mathcal{B}^{\ast} \bm{x}_{2} \Vert_{ \mathcal{A}_{1}^{-1} }
= \sup \limits_{ \bm{x}_{1} \in \mathcal{H}_{1} } 
\frac{  ( \mathcal{B} \bm{x}_{1}, \bm{x}_{2} ) }{  \Vert \bm{x}_{1} \Vert_{ \mathcal{A}_{1} } }
\geq \zeta  \Vert \bm{x}_{2} \Vert_{ \mathcal{H}_{2} },
\end{align*} 
\end{small}
we get
\begin{small}
\begin{align*}
&  ( \bm{x},  \mathcal{M}_{\mathcal{L}} \mathcal{A} \bm{x}  )_{ \mathcal{M}^{-1} } 
\geq \gamma_{0} \Vert \bm{x}_{1} \Vert_{ \mathcal{A}_{1} }^{2}
+ \gamma_{0} \zeta^{2} \Vert \bm{x}_{2} \Vert_{ \mathcal{H}_{2} }^{2}
\geq
\min \left\{ \gamma_{0}, \gamma_{0} \zeta^{2} \right\}
(\bm{x}, \bm{x})_{ \mathcal{M}^{-1} }^{2},
\end{align*}
\end{small}
which leads to the lower bound {\small $\gamma$}. The upper bound {\small $\Gamma$} follows directly from the boundedness of each term.
\end{proof}

As we mentioned before, applying {\small $\mathcal{M}_{\mathcal{L}}$} defined in Theorem \ref{thm:toy_model_exact_solve} as preconditioner can be expensive and time-consuming. Therefore, we replace the diagonal blocks by their spectral equivalent SPD approximations. The following theorem states that under certain assumptions, such preconditioner is still robust.

\begin{theorem}\label{thm:toy_model_inexact}
If the condition \eqref{eq:B_infsup} holds, there exist constants {\small $\gamma$} and {\small $\Gamma$} such that for all {\small $\bm{x} \neq \bm{0}$}, the operator {\small $\mathcal{A}$} defined in \eqref{prob:toy_model} and the operator
\begin{small}
\begin{align*}
& \widehat{ \mathcal{M} }_{\mathcal{L}} = \begin{pmatrix}
\mathcal{Q}_{1}^{-1} & 0 \\
\mathcal{B} & \mathcal{Q}_{2}^{-1}
\end{pmatrix}^{-1}
\end{align*}
\end{small}
satisfy condition \eqref{def:FOV} with the norm {\small $\Vert \cdot \Vert_{ \mathcal{M}^{-1} }$} induced by {\small $\mathcal{M} = \mathrm{diag} \left( \mathcal{Q}_{1}, \mathcal{Q}_{2} \right)$} provided that 
\begin{enumerate}
\item {\small $c_{2, i } \left( \mathcal{Q}_{ i }  \bm{x}, \bm{x} \right)  
 \leq  \left( \mathcal{H}_{ i }^{-1} \bm{x}, \bm{x} \right) 
 \leq c_{1, i } \left( \mathcal{Q}_{i} \bm{x} , \bm{x} \right)$}, {\small $i = 1$} or {\small $2$},
\item {\small $\Vert \mathcal{I}_{1} - \mathcal{Q}_{1} \mathcal{A}_{1} \Vert_{ \mathcal{A}_{1} } \leq \rho$}, with {\small $0 \leq \rho < 1$}.
\end{enumerate}
\end{theorem}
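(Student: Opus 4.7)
My plan is to parallel the proof of Theorem \ref{thm:toy_model_exact_solve}, absorbing the inexactness of $\mathcal{Q}_1, \mathcal{Q}_2$ as a mild perturbation controlled by assumption (2). First I will invert the lower-triangular preconditioner to get $\widehat{\mathcal{M}}_{\mathcal{L}} = \begin{pmatrix} \mathcal{Q}_1 & 0 \\ -\mathcal{Q}_2 \mathcal{B} \mathcal{Q}_1 & \mathcal{Q}_2 \end{pmatrix}$, and then expand $(\bm{x}, \widehat{\mathcal{M}}_{\mathcal{L}} \mathcal{A} \bm{x})_{\mathcal{M}^{-1}}$ with $\mathcal{M}^{-1} = \mathrm{diag}(\mathcal{Q}_1^{-1}, \mathcal{Q}_2^{-1})$. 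A block-wise computation (using $(\mathcal{Q}_i \bm{y}, \bm{z})_{\mathcal{Q}_i^{-1}} = (\bm{y}, \bm{z})$ and the resulting cancellation of the cross terms $\pm(\bm{x}_1, \mathcal{B}^\ast \bm{x}_2)$) should collapse this to
\[
(\bm{x}, \widehat{\mathcal{M}}_{\mathcal{L}} \mathcal{A} \bm{x})_{\mathcal{M}^{-1}}
= \|\bm{x}_1\|_{\mathcal{A}_1}^2 - (\mathcal{Q}_1 \mathcal{A}_1 \bm{x}_1, \mathcal{B}^\ast \bm{x}_2) + \|\mathcal{B}^\ast \bm{x}_2\|_{\mathcal{Q}_1}^2.
\]
In the exact case $\mathcal{Q}_1 = \mathcal{A}_1^{-1}$ this reduces to the expression already handled in Theorem \ref{thm:toy_model_exact_solve}; the only new features are that $\mathcal{Q}_1 \mathcal{A}_1 \neq \mathcal{I}_1$ and that the norm $\|\cdot\|_{\mathcal{A}_1^{-1}}$ is replaced by $\|\cdot\|_{\mathcal{Q}_1}$.

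Next I will bound the cross term. Because $\mathcal{A}_1 = \mathcal{H}_1$ and $\mathcal{Q}_1$ is SPD, $\mathcal{Q}_1 \mathcal{A}_1$ is $\mathcal{A}_1$-symmetric, and assumption (2) sharpens to the operator inequality $(1-\rho)\mathcal{A}_1^{-1} \leq \mathcal{Q}_1 \leq (1+\rho)\mathcal{A}_1^{-1}$; in particular $\mathcal{A}_1 \leq (1+\rho)\mathcal{Q}_1^{-1}$, which gives $\|\mathcal{Q}_1 \bm{y}\|_{\mathcal{A}_1} \leq \sqrt{1+\rho}\,\|\bm{y}\|_{\mathcal{Q}_1}$. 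Combining $\mathcal{A}_1$-Cauchy--Schwarz with the symmetry of $\mathcal{Q}_1$,
\[
|(\mathcal{Q}_1 \mathcal{A}_1 \bm{x}_1, \mathcal{B}^\ast \bm{x}_2)| = |(\bm{x}_1, \mathcal{Q}_1 \mathcal{B}^\ast \bm{x}_2)_{\mathcal{A}_1}| \leq \sqrt{1+\rho}\,\|\bm{x}_1\|_{\mathcal{A}_1}\|\mathcal{B}^\ast \bm{x}_2\|_{\mathcal{Q}_1}.
\]
Setting $\xi_1 := \|\bm{x}_1\|_{\mathcal{A}_1}$ and $\xi_2 := \|\mathcal{B}^\ast \bm{x}_2\|_{\mathcal{Q}_1}$ produces the lower estimate $\xi_1^2 - \sqrt{1+\rho}\,\xi_1 \xi_2 + \xi_2^2$, which is positive definite precisely because $\rho < 1$ forces $\sqrt{1+\rho}/2 < 1$. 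Hence there exists $\gamma_0 > 0$, depending only on $\rho$, with $(\bm{x}, \widehat{\mathcal{M}}_{\mathcal{L}} \mathcal{A} \bm{x})_{\mathcal{M}^{-1}} \geq \gamma_0(\xi_1^2 + \xi_2^2)$.

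The remainder is essentially the same as in Theorem \ref{thm:toy_model_exact_solve}: I will invoke the inf-sup condition \eqref{eq:B_infsup} to replace $\xi_2$ by a multiple of $\|\bm{x}_2\|_{\mathcal{H}_2}$, picking up an extra factor $\sqrt{1-\rho}$ from the spectral bound to pass from $\|\cdot\|_{\mathcal{A}_1^{-1}}$ to $\|\cdot\|_{\mathcal{Q}_1}$, and then use assumption (1) to convert $\|\bm{x}_1\|_{\mathcal{H}_1}^2 + \|\bm{x}_2\|_{\mathcal{H}_2}^2$ into a multiple of $(\bm{x}, \bm{x})_{\mathcal{M}^{-1}}$; this delivers the constant $\gamma$ in \eqref{def:FOV}. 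The upper bound $\Gamma$ follows from a straightforward block-wise Cauchy--Schwarz on $\widehat{\mathcal{M}}_{\mathcal{L}} \mathcal{A}$, using only the continuity of $\mathcal{B}$ (which is implied by the well-posedness of \eqref{prob:toy_model}, see Lemma \ref{lemma:boundness}) and assumption (1). I expect the main obstacle to be the positive-definiteness step for the perturbed quadratic form under the single constraint $\rho < 1$: this is the one place where assumption (2) is genuinely needed, and everything else is a clean perturbation of the exact analysis.
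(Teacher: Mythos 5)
Your proposal is correct and follows essentially the same route as the paper: the same expansion collapsing $( \bm{x}, \widehat{\mathcal{M}}_{\mathcal{L}} \mathcal{A} \bm{x} )_{\mathcal{M}^{-1}}$ to $\Vert \bm{x}_1\Vert_{\mathcal{A}_1}^2 - (\mathcal{Q}_1\mathcal{A}_1\bm{x}_1, \mathcal{B}^{\ast}\bm{x}_2) + \Vert \mathcal{B}^{\ast}\bm{x}_2\Vert_{\mathcal{Q}_1}^2$, the same spectral consequences of assumption (2), the same $2\times 2$ positive-definiteness argument, and the same use of \eqref{eq:B_infsup} and assumption (1) at the end. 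The only (harmless) difference is that your Cauchy--Schwarz gives the slightly sharper cross-term constant $\sqrt{1+\rho}$ where the paper uses $1+\rho$; both yield positive definiteness for $0\leq\rho<1$.
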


\begin{proof}
By simple computation, we get
\begin{small}
\begin{align*}
& \widehat{ \mathcal{M} }_{\mathcal{L}} \mathcal{A} = 
\begin{pmatrix}
\mathcal{Q}_{1} \mathcal{A}_{1} & - \mathcal{Q}_{1} \mathcal{B}^{\ast} \\
\mathcal{Q}_{2} \mathcal{B} ( \mathcal{I}_{1} - \mathcal{Q}_{1} \mathcal{A}_{1} )
& \mathcal{Q}_{2} \mathcal{B} \mathcal{Q}_{1} \mathcal{B}^{\ast}
\end{pmatrix}.
\end{align*}
\end{small}
Then for any {\small $\bm{x} = ( \bm{x}_{1}, \bm{x}_{2} )^{T}$}, we have
\begin{small}
\begin{align*}
( \bm{x}, \widehat{ \mathcal{M} }_{\mathcal{L}} \mathcal{A} \bm{x} )_{ \mathcal{M}^{-1} } 
& = \Vert \bm{x}_{1} \Vert_{ \mathcal{A}_{1} }^{2}
- ( \mathcal{B}^{\ast} \bm{x}_{2}, \bm{x}_{1} )
+ ( \mathcal{B} ( \mathcal{I}_{1} - \mathcal{Q}_{1} \mathcal{A}_{1} ) \bm{x}_{1}, \bm{x}_{2} )
+ \Vert \mathcal{B}^{\ast} \bm{x}_{2} \Vert_{ \mathcal{Q}_{1} }^{2} \\
& = \Vert \bm{x}_{1} \Vert_{ \mathcal{A}_{1} }^{2}
- ( \mathcal{Q}_{1} \mathcal{A}_{1}  \bm{x}_{1}, \mathcal{B}^{\ast} \bm{x}_{2} )
+ \Vert \mathcal{B}^{\ast} \bm{x}_{2} \Vert_{ \mathcal{Q}_{1} }^{2} .
\end{align*}
\end{small}
As {\small $\Vert \mathcal{I}_{1} - \mathcal{Q}_{1} \mathcal{A}_{1} \Vert_{ \mathcal{A}_{1} } \leq \rho$} implies that
\begin{small}
\begin{align*}
& ( 1-\rho ) (\bm{x}_{1}, \bm{x}_{1})_{ \mathcal{A}_{1}^{-1} } 
\leq (\bm{x}_{1}, \bm{x}_{1})_{ \mathcal{Q}_{1} } 
\leq ( 1+\rho ) (\bm{x}_{1}, \bm{x}_{1})_{ \mathcal{A}_{1}^{-1} }, \\
& ( 1+\rho )^{-1} (\bm{x}_{1}, \bm{x}_{1})_{ \mathcal{A}_{1} } 
\leq (\bm{x}_{1}, \bm{x}_{1})_{\mathcal{Q}_{1}^{-1} } 
\leq ( 1-\rho )^{-1} (\bm{x}_{1}, \bm{x}_{1})_{ \mathcal{A}_{1} },
\end{align*}
\end{small}
we have
\begin{small}
\begin{align*}
- ( \mathcal{Q}_{1} \mathcal{A}_{1}  \bm{x}_{1}, \mathcal{B}^{\ast} \bm{x}_{2} ) 
& \leq  
\| \mathcal{A}_1 \bm{x}_1 \|_{\mathcal{Q}_1} \| \mathcal{B}^{\ast} \bm{x}_2 \|_{\mathcal{Q}_1} 
\leq (1+\rho) \| \mathcal{A}_1 \bm{x}_1 \|_{\mathcal{A}_1^{-1}} \| \mathcal{B}^{\ast} \bm{x}_2 \|_{\mathcal{Q}_1} \\  
& = (1+\rho) \| \bm{x}_1 \|_{\mathcal{A}_1} \| \mathcal{B}^{\ast} \bm{x}_2 \|_{\mathcal{Q}_1}, \\
\end{align*}
\end{small}
Therefore,
\begin{small}
\begin{align*}
( \bm{x}, \widehat{ \mathcal{M} }_{\mathcal{L}} \mathcal{A} \bm{x} )_{ \mathcal{M}^{-1} } 
& \geq 
 \Vert \bm{x}_{1} \Vert_{ \mathcal{A}_{1} }^{2}
- (1+ \rho) \Vert \bm{x}_{1} \Vert_{ \mathcal{A}_1 }
\Vert \mathcal{B}^{\ast} \bm{x}_{2} \Vert_{ \mathcal{Q}_{1} }
+ \Vert \mathcal{B}^{\ast} \bm{x}_{2} \Vert_{ \mathcal{Q}_{1} }^{2} \\
& = 
\begin{pmatrix}
\xi_{1} \\
\xi_{2}
\end{pmatrix}^{T}
\begin{pmatrix}
1 & -(1+\rho)/2 \\
-(1+\rho)/2 & 1
\end{pmatrix}
\begin{pmatrix}
\xi_{1} \\
\xi_{2}
\end{pmatrix},
\end{align*}
\end{small}
where {\small $\xi_{1} = \Vert \bm{x}_{1} \Vert_{ \mathcal{A}_{1} }$}, {\small $\xi_{2} = \Vert \mathcal{B}^{\ast} \bm{x}_{2} \Vert_{ \mathcal{Q}_{1} }$}. We can verify that the matrix in the middle is SPD when {\small $0 \leq \rho < 1$}. Therefore, there exists a constant {\small $\gamma_{0} > 0$} such that
\begin{small}
\begin{align*}
( \bm{x}, \widehat{ \mathcal{M} }_{\mathcal{L}} \mathcal{A} \bm{x} )_{ \mathcal{M}^{-1} } 
& \geq 
\gamma_{0} \left( \Vert \bm{x}_{1} \Vert_{ \mathcal{A}_{1} }^{2}
+ \Vert \mathcal{B}^{\ast} \bm{x}_{2} \Vert_{ \mathcal{Q}_{1} }^{2}  \right)
\geq
\gamma_{0} (1-\rho) \Vert \bm{x}_{1} \Vert_{ \mathcal{Q}_{1}^{-1} }^{2}
+ \gamma_{0} (1-\rho) \zeta^{2} \Vert \bm{x}_{2} \Vert_{ \mathcal{H}_{2} }^{2} \\
& \geq
\min\left\{ \gamma_{0} (1-\rho), \gamma_{0} (1-\rho) \zeta^{2} c_{1,2}^{-1} \right\} 
( \bm{x}, \bm{x} )_{ \mathcal{M}^{-1} },
\end{align*}
\end{small}
which leads to the lower bound {\small $\gamma$}. The upper bound {\small $\Gamma$} follows directly from the fact that each term is bounded.
\end{proof}

We comment that the second assumption in Theorem \ref{thm:toy_model_inexact} is reasonable as in practice we can achieve it by performing one or several steps of V-cycle multigrid method.

\subsubsection{Application to MHD system}
In this section, we discuss how to design FOV-equivalent preconditioners for the structure-preserving discretization \eqref{eqn:problem-a0-1}.  Instead of giving details of the FOV-equivalent preconditioners for the auxiliary problem \eqref{eqn:problem-aux-1}, we comment that similar preconditioners exist, and theoretical results follow by similar argument.

The operator form of the mixed formulation \eqref{eqn:problem-a0-1} is
\begin{small}
\begin{align*}
\mathcal{A}\bm{x} = \bm{F}  \Longrightarrow
& \begin{pmatrix}
\mathcal{A}_{1} & -\mathrm{div}^{\ast} & 0 & \mathcal{F}^{\ast} \\
\mathrm{div} & 0 & 0 & 0 \\
0 & 0 & \alpha k^{-1} \mu_{r}^{-1} \mathcal{I}_{3} & \alpha \mu_{r}^{-1} \mathrm{curl} \\
\mathcal{F} & 0 & - \alpha \mathrm{curl}^{\ast} \mu_{r}^{-1} & s \sigma_{r} \mathcal{I}_{4}
\end{pmatrix} 
\begin{pmatrix}
\bm{u} \\ p \\ \bm{B} \\ \bm{E}
\end{pmatrix} = \begin{pmatrix}
\bm{h}_{1} \\ g \\ \bm{h}_{2} \\ \bm{h}_{3}
\end{pmatrix},
\end{align*}
\end{small}
Here, we multiply {\small $-1$} to the second and the third equations because we do not require the system to be symmetric any more and such a small modification makes our exposition more clear. Moreover, we still have {\small $\mathcal{H}_{1} = \mathcal{A}_{1}$}.

Based on the well-posedness of scheme \eqref{eqn:problem-a0-1}, it is natural to propose the following block lower triangular preconditioner {\small $\mathcal{M}_{\mathcal{L}}$}:
\begin{small}
\begin{align}\label{eqn:L-no-stable}
& \mathcal{M}_{\mathcal{L}} = \begin{pmatrix}
\mathcal{A}_{1} & 0 & 0 & 0 \\
\mathrm{div} & k \mathcal{I}_{2} & 0 & 0 \\
0 & 0 & \alpha k^{-1} \mu_{r}^{-1} \mathcal{I}_{3} & 0 \\
\mathcal{F} & 0 & - \alpha \mathrm{curl}^{\ast} \mu_{r}^{-1} & \mathcal{H}_{4}
\end{pmatrix}^{-1}.
\end{align}
\end{small}
Note that the diagonal blocks of {\small $\mathcal{M}_{\mathcal{L}}^{-1}$} are the same as those of {\small $\mathcal{D}^{-1}$} defined by \eqref{eqn:B-no-stable}.  Thus, naturally we choose inner product {\small $(\cdot, \cdot)_{\mathcal{H}}$} and norm {\small $\| \cdot \|_{\mathcal{H}}$} \eqref{eq:Hnorm} to verify the condition \eqref{def:FOV}. We prove that operator {\small $\mathcal{M}_{\mathcal{L}}$} is a robust FOV-equivalent preconditioner in the following theorem. 

\begin{theorem}\label{thm:lower_exact_solve}
If {\small $k \leq k_{0}$}, there exist constants {\small $\gamma$} and {\small $\Gamma$} that are independent of the mesh size {\small $h$}, time step size {\small $k$}, and physical parameters {\small $Rm$}, {\small $s$}, {\small $\mu_r$}, and {\small $\sigma_r$}, such that for all {\small $\bm{x} \neq \bm{0} $}, the condition \eqref{def:FOV} holds with {\small $(\cdot, \cdot)_{\mathcal{M}^{-1} } = (\cdot, \cdot)_{ \mathcal{H} }$}.
\end{theorem}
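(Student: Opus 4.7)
The strategy is to mirror the two-block argument of Theorem \ref{thm:toy_model_exact_solve} for the full four-block operator. Writing $\bm{y} := \mathcal{M}_{\mathcal{L}} \mathcal{A} \bm{x}$, the plan is first to solve the lower triangular system $\mathcal{M}_{\mathcal{L}}^{-1} \bm{y} = \mathcal{A} \bm{x}$ componentwise, introducing the auxiliary element $\bm{w} := \mathcal{A}_{1}^{-1} (\mathrm{div}^{\ast} p - \mathcal{F}^{\ast} \bm{E}) \in \bm{V}_h$. A direct elimination yields $\bm{y}_1 = \bm{u} - \bm{w}$, $\bm{y}_2 = k^{-1} \mathrm{div}\,\bm{w}$, $\bm{y}_3 = \bm{B} + k\,\mathrm{curl}\,\bm{E}$, and, using the identity $\mathcal{H}_4 = s\sigma_r \mathcal{I}_4 + k\alpha\,\mathrm{curl}^{\ast} \mu_r^{-1}\mathrm{curl}$, $\bm{y}_4 = \bm{E} + \mathcal{H}_4^{-1} \mathcal{F}\bm{w}$.

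Next, I would expand $(\bm{x}, \bm{y})_{\mathcal{H}}$ block by block. The identity $\mathrm{div}\circ\mathrm{curl} = 0$ kills the mixed term $(\mathrm{div}^{\ast} \mu_r^{-1} \mathrm{div}\,\bm{B}, \mathrm{curl}\,\bm{E})$ arising from the $\bm{B}$ block, while the symmetry identity $(\mathcal{A}_1 \bm{w}, \bm{w}) = (p, \mathrm{div}\,\bm{w}) - (\mathcal{F}\bm{w}, \bm{E})$, coming from the defining equation of $\bm{w}$, makes $\|\bm{w}\|_{\mathcal{A}_1}^2$ appear explicitly. Assembling the pieces produces
\[
(\bm{x}, \bm{y})_{\mathcal{H}} = \|\bm{u}\|_{\mathcal{A}_1}^2 + \|\bm{w}\|_{\mathcal{A}_1}^2 + \|\bm{B}\|_{\mathcal{H}_3}^2 + \|\bm{E}\|_{\mathcal{H}_4}^2 + R,
\]
where $R = -(\mathrm{div}\,\bm{u}, p) + (\mathcal{F}\bm{u}, \bm{E}) + 2(\mathcal{F}\bm{w}, \bm{E}) + \alpha(\mu_r^{-1} \bm{B}, \mathrm{curl}\,\bm{E})$. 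Each term of $R$ is controlled by the weighted norms through Cauchy--Schwarz and the elementary scalings $\|\mathrm{div}\,\bm{u}\| \le k^{1/2}\|\bm{u}\|_{\mathcal{H}_1}$, $k^{1/2}\|p\| = \|p\|_{\mathcal{H}_2}$, $\alpha^{1/2}\|\mathrm{curl}\,\bm{E}\|_{\mu_r^{-1}} \le k^{-1/2}\|\bm{E}\|_{\mathcal{H}_4}$, $\alpha^{1/2}\|\bm{B}\|_{\mu_r^{-1}} \le k^{1/2}\|\bm{B}\|_{\mathcal{H}_3}$, combined with the boundedness of $\mathcal{F}$ from Lemma \ref{lemma:boundness}. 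To recover $\|p\|_{\mathcal{H}_2}^2$ in the lower bound, where the pressure never enters diagonally, I would invoke Lemma \ref{lemma:b_infsup}: pick a test function $\bm{v}^{\ast} \in \bm{V}_h$ with $(p, \mathrm{div}\,\bm{v}^{\ast}) \ge \zeta\|p\|_{\mathcal{H}_2}\|\bm{v}^{\ast}\|_{\mathcal{H}_1}$ and pair it against $\bm{w}$ via $(\mathcal{A}_1 \bm{w}, \bm{v}^{\ast}) = (p, \mathrm{div}\,\bm{v}^{\ast}) - (\mathcal{F}\bm{v}^{\ast}, \bm{E})$; this yields the key inequality $\zeta\|p\|_{\mathcal{H}_2} \le \|\bm{w}\|_{\mathcal{A}_1} + \|\bm{E}\|_{\mathcal{H}_4}$, which converts the positive $\|\bm{w}\|_{\mathcal{A}_1}^2$ into pressure control. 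A careful Young-inequality balancing then produces $(\bm{x}, \bm{y})_{\mathcal{H}} \ge \gamma \|\bm{x}\|_{\mathcal{H}}^2$ with $\gamma$ independent of the parameters. The upper bound $\|\bm{y}\|_{\mathcal{H}} \le \Gamma \|\bm{x}\|_{\mathcal{H}}$ follows routinely from the explicit formulas for the $\bm{y}_i$ together with $\|\bm{w}\|_{\mathcal{A}_1} \le \|p\|_{\mathcal{H}_2} + \|\bm{E}\|_{\mathcal{H}_4}$.

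The main obstacle is the pressure block: since $\|p\|_{\mathcal{H}_2}^2$ never appears as a perfect square in the bilinear form and must be routed indirectly through the inf-sup condition on $\bm{w}$, and since $\mathcal{F}^{\ast}\bm{E}$ sits inside the defining equation of $\bm{w}$, the resulting pressure estimate entangles $\|p\|_{\mathcal{H}_2}$ with $\|\bm{E}\|_{\mathcal{H}_4}$. Consequently, the Young parameters for the four cross terms of $R$ must be tuned jointly so that after all absorptions are performed, the coefficient of $\|\bm{E}\|_{\mathcal{H}_4}^2$ in the final lower bound remains strictly positive. The hypothesis $k \le k_0$ enters through Lemma \ref{lemma:a_infsup} and guarantees that $\mathcal{A}_1$ behaves like $\mathcal{H}_1$ uniformly in the parameters, so that the nonlinear coupling $s\sigma_r(\bm{u}\times\bm{B}^-)\times\bm{B}^-$ hidden in $\mathcal{A}_1$ does not spoil the absorption.
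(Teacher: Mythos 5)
Your block elimination is correct: with $\bm{w}=\mathcal{A}_1^{-1}(\mathrm{div}^{\ast}p-\mathcal{F}^{\ast}\bm{E})$ the formulas for $\bm{y}_1,\dots,\bm{y}_4$ and the identity
$(\bm{x},\bm{y})_{\mathcal{H}}=\|\bm{u}\|_{\mathcal{A}_1}^2+\|\bm{w}\|_{\mathcal{A}_1}^2+\|\bm{B}\|_{\mathcal{H}_3}^2+\|\bm{E}\|_{\mathcal{H}_4}^2+R$
agree exactly with the paper's expansion of $\mathcal{M}_{\mathcal{L}}\mathcal{A}$. The gap is in how you estimate $R$. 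Bounding its four terms independently by Cauchy--Schwarz does not yield a positive definite quadratic form. Concretely, $|2(\mathcal{F}\bm{w},\bm{E})|\le \sqrt{2}\,\|\bm{w}\|_{\mathcal{A}_1}\sqrt{s}\|\bm{E}\|_{\sigma_r}$ and $|(p,\mathrm{div}\,\bm{u})|+|(\mathcal{F}\bm{u},\bm{E})|\le \|\bm{u}\|_{\mathcal{A}_1}\bigl(\|\bm{w}\|_{\mathcal{A}_1}+\sqrt{2s}\,\|\bm{E}\|_{\sigma_r}\bigr)$ give, in the variables $\xi_1=\|\bm{u}\|_{\mathcal{A}_1}$, $\xi_w=\|\bm{w}\|_{\mathcal{A}_1}$, $\xi_4=\sqrt{s}\|\bm{E}\|_{\sigma_r}$, the form $\xi_1^2+\xi_w^2+\xi_4^2-\xi_1\xi_w-\sqrt{2}\,\xi_1\xi_4-\sqrt{2}\,\xi_w\xi_4$, which is negative at $(\xi_1,\xi_w,\xi_4)=(1,1,\sqrt{2})$; no choice of Young parameters repairs an indefinite form. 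Routing $-(p,\mathrm{div}\,\bm{u})$ through $\|p\|_{\mathcal{H}_2}$ and your inequality $\zeta\|p\|_{\mathcal{H}_2}\le\|\bm{w}\|_{\mathcal{A}_1}+\|\bm{E}\|_{\mathcal{H}_4}$ is worse still: it puts $\zeta^{-1}$ into the cross-term coefficients, whereas the inf-sup constant must only enter the final conversion of the lower bound, as in the paper.

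What rescues the argument are two exact cancellations that your term-by-term treatment discards: first, $-(p,\mathrm{div}\,\bm{u})+(\mathcal{F}\bm{u},\bm{E})=-(\bm{w},\bm{u})_{\mathcal{A}_1}$, so the $\bm{u}$--$\bm{E}$ coupling disappears rather than being counted twice; second, the Schur identity $\|\bm{w}\|_{\mathcal{A}_1}^2+2(\mathcal{F}\bm{w},\bm{E})=\|\mathrm{div}^{\ast}p\|_{\mathcal{A}_1^{-1}}^2-\|\mathcal{F}^{\ast}\bm{E}\|_{\mathcal{A}_1^{-1}}^2$. This is precisely why the paper works with $\xi_2=\|\mathrm{div}^{\ast}p\|_{\mathcal{A}_1^{-1}}$ instead of $\|\bm{w}\|_{\mathcal{A}_1}$: the two off-diagonal Schur terms cancel exactly, the only loss is $-\|\mathcal{F}^{\ast}\bm{E}\|_{\mathcal{A}_1^{-1}}^2\ge-\tfrac{s}{2}\|\bm{E}\|_{\sigma_r}^2$, and the resulting $5\times5$ matrix is SPD. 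Note also that $k\le k_0$ does not enter through Lemma \ref{lemma:a_infsup} or through ``$\mathcal{A}_1$ behaving like $\mathcal{H}_1$'' (they are identical operators); it is used solely to obtain $\|\mathcal{F}^{\ast}\bm{E}\|_{\mathcal{A}_1^{-1}}\le\tfrac{1}{\sqrt{2}}\sqrt{s}\|\bm{E}\|_{\sigma_r}$ with the factor $\tfrac{1}{\sqrt{2}}$ strictly below $1$, which is exactly what keeps the quadratic form positive definite.
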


\begin{proof}
By simple computation, we get
% L^{-1}.
%\begin{small}
%\begin{align*}
%& \mathcal{M}_{\mathcal{L}} = \begin{pmatrix}
%\mathcal{A}_{1}^{-1} & 0 & 0 & 0 \\
%- k^{-1} \mathrm{div} \mathcal{A}_{1}^{-1} & k^{-1} \mathcal{I}_{2} & 0 & 0 \\
%0 & 0 & \alpha^{-1} k \mu_{r} \mathcal{I}_{3} & 0 \\
%- \mathcal{H}_{4}^{-1} \mathcal{F} \mathcal{A}_{1}^{-1} & 0 & k \mu_{r} \mathcal{H}_{4}^{-1} \mathrm{curl}^{\ast} \mu_{r}^{-1} & \mathcal{H}_{4}^{-1}
%\end{pmatrix}.
%\end{align*}
%\end{small}
% end L^{-1}.
\begin{small}
\begin{align*}
& \mathcal{M}_{\mathcal{L}} \mathcal{A} = \begin{pmatrix}
\mathcal{I}_{1} & - \mathcal{A}_{1}^{-1} \mathrm{div}^{\ast} & 0 & \mathcal{A}_{1}^{-1} \mathcal{F}^{\ast} \\
0 & k^{-1} \mathrm{div} \mathcal{A}_{1}^{-1} \mathrm{div}^{\ast} & 0 & - k^{-1} \mathrm{div} \mathcal{A}_{1}^{-1} \mathcal{F}^{\ast} \\
0 & 0 & \mathcal{I}_{3} & k \mathrm{curl} \\
0 &  \mathcal{H}_{4}^{-1} \mathcal{F} \mathcal{A}_{1}^{-1} \mathrm{div}^{\ast} & 0 &  \mathcal{H}_{4}^{-1} \mathcal{S}_{4}
\end{pmatrix}
\end{align*}
\end{small}
where 
\begin{small}
$$
	\mathcal{S}_{4} = s \sigma_{r} \mathcal{I}_{4} 
	+ \alpha k \mathrm{curl}^{\ast} \mu_{r}^{-1} \mathrm{curl}
	- \mathcal{F} \mathcal{A}_{1}^{-1} \mathcal{F}^{\ast} .
$$ 
\end{small}
Then for any {\small $\bm{x} = ( \bm{u}, p, \bm{B}, \bm{E} )^{T}$}, we have
\begin{small}
\begin{align*}
( \bm{x},  \mathcal{M}_{\mathcal{L}} \mathcal{A} \bm{x} )_{\mathcal{H}}
& = ( \bm{u}, \bm{u} )_{\mathcal{A}_{1} } 
 - ( \bm{u}, \mathrm{div}^{\ast} p ) + ( \bm{u}, \mathcal{F}^{\ast} \bm{E} )
+  ( p, \mathrm{div} \mathcal{A}_{1}^{-1} \mathrm{div}^{\ast} p ) \\
& \quad -  ( p, \mathrm{div} \mathcal{A}_{1}^{-1} \mathcal{F}^{\ast} \bm{E} ) 
+ ( \bm{B}, \bm{B} )_{ \mathcal{H}_{3} } 
+ \alpha (\bm{B}, \mathrm{curl} \bm{E})_{ \mu_{r}^{-1} } \\
& \quad +  ( \bm{E}, \mathcal{F} \mathcal{A}_{1}^{-1} \mathrm{div}^{\ast} p )
+  (\bm{E}, \mathcal{S}_{4} \bm{E} ) \\
& \geq
\Vert \bm{u} \Vert_{\mathcal{A}_{1} }^{2}
-  \Vert \bm{u} \Vert_{\mathcal{A}_{1} } \Vert \mathrm{div}^{\ast} p \Vert_{ \mathcal{A}_{1}^{-1} }
-  \Vert \bm{u} \Vert_{\mathcal{A}_{1} } \Vert \mathcal{F}^{\ast} \bm{E} \Vert_{ \mathcal{A}_{1}^{-1} }
+ \Vert \mathrm{div}^{\ast} p \Vert_{ \mathcal{A}_{1}^{-1} }^{2}
+ \Vert \bm{B} \Vert_{ \mathcal{H}_{3} }^{2} \\
& \quad
- \Vert \bm{B} \Vert_{ \mathcal{H}_{3} }
\sqrt{\alpha k} \Vert \mathrm{curl} \bm{E} \Vert_{ \mu_{r}^{-1} } 
+ s \Vert \bm{E} \Vert_{ \sigma_{r} }^{2}
+ \alpha k \Vert \mathrm{curl} \bm{E} \Vert_{ \mu_{r}^{-1} }^{2}
- \Vert \mathcal{F}^{\ast} \bm{E} \Vert_{ \mathcal{A}_{1}^{-1} }^{2}.
\end{align*}
\end{small}
Since 
\begin{small}
\begin{align*}
& \Vert \mathcal{F}^{\ast} \bm{E} \Vert_{\mathcal{A}_{1}^{-1} }
= \sup\limits_{ \bm{v} \neq \bm{0} } 
\frac{ ( \mathcal{F}^{\ast} \bm{E}, \bm{v} ) }{ \Vert \bm{v} \Vert_{\mathcal{A}_{1} } }
= \sup\limits_{ \bm{v} \neq \bm{0} } 
\frac{ ( \bm{E},  \mathcal{F} \bm{v} ) }{ \Vert \bm{v} \Vert_{\mathcal{A}_{1} } }
\leq \sup\limits_{ \bm{v} \neq \bm{0} } 
\frac{  \sqrt{s} \Vert \bm{E} \Vert_{\sigma_r}  \sqrt{s} \Vert \bm{v} \times \bm{B}^{-} \Vert_{\sigma_r}   }{ \Vert \bm{v} \Vert_{\mathcal{A}_{1} } }
\leq \frac{1}{\sqrt{2}} \sqrt{s} \Vert \bm{E} \Vert_{ \sigma_r},
\end{align*}
\end{small}
we get
\begin{small}
\begin{align*}
& ( \bm{x},  \mathcal{M}_{\mathcal{L}} \mathcal{A} \bm{x} )_{\mathcal{H}} 
\geq
\Vert \bm{u} \Vert_{\mathcal{A}_{1} }^{2}
-  \Vert \bm{u} \Vert_{\mathcal{A}_{1} } \Vert \mathrm{div}^{\ast} p \Vert_{ \mathcal{A}_{1}^{-1} }
-  \frac{\sqrt{s}}{\sqrt{2} } \Vert \bm{u} \Vert_{\mathcal{A}_{1} } \Vert \bm{E} \Vert_{ \sigma_{r} }
+ \Vert \mathrm{div}^{\ast} p \Vert_{ \mathcal{A}_{1}^{-1} }^{2} 
+ \Vert \bm{B} \Vert_{ \mathcal{H}_{3} }^{2} \\
& \qquad \qquad \qquad
- \Vert \bm{B} \Vert_{ \mathcal{H}_{3} }
\sqrt{\alpha k} \Vert \mathrm{curl} \bm{E} \Vert_{ \mu_{r}^{-1} } 
+ \frac{s}{2} \Vert \bm{E} \Vert_{ \sigma_{r} }^{2}
+ \alpha k \Vert \mathrm{curl} \bm{E} \Vert_{ \mu_{r}^{-1} }^{2} \\
&\qquad \qquad  \quad
= 
\begin{pmatrix}
\xi_{1} \\
\xi_{2} \\
\xi_{3} \\
\xi_{4} \\
\xi_{5}
\end{pmatrix}^{T}
\begin{pmatrix}
1 & -1/2 & 0 & -1/2\sqrt{2} & 0 \\
-1/2 & 1 & 0 & 0 & 0 \\
0 & 0 & 1 & 0 & -1/2 \\
-1/2\sqrt{2} & 0 & 0 & 1/2 & 0 \\
0 & 0 & -1/2 & 0 & 1
\end{pmatrix}
\begin{pmatrix}
\xi_{1} \\
\xi_{2} \\
\xi_{3} \\
\xi_{4} \\
\xi_{5} 
\end{pmatrix},
\end{align*}
\end{small}
where {\small $\xi_{1} = \Vert \bm{u} \Vert_{\mathcal{A}_{1} } $}, {\small $\xi_{2} = \Vert \mathrm{div}^{\ast} p \Vert_{ \mathcal{A}_{1}^{-1} }$}, {\small $\xi_{3} = \Vert \bm{B} \Vert_{ \mathcal{H}_{3} }$}, {\small $\xi_{4} = \sqrt{s} \Vert \bm{E} \Vert_{ \sigma_{r} }$} and {\small $\xi_{5} = \sqrt{\alpha k} \Vert \mathrm{curl} \bm{E} \Vert_{ \mu_{r}^{-1} }$}. It is easy to verify that the matrix in the middle is SPD. Therefore, there exists a constant $\gamma_{0} > 0$ such that
\begin{small}
\begin{align*}
( \bm{x},  \mathcal{M}_{\mathcal{L}} \mathcal{A} \bm{x} )_{\mathcal{H}} 
& \geq
\gamma_{0} \left(
\Vert \bm{u} \Vert_{\mathcal{A}_{1} }^{2}
+ \Vert \mathrm{div}^{\ast} p \Vert_{ \mathcal{A}_{1}^{-1} }^{2}
+ \Vert \bm{B} \Vert_{ \mathcal{H}_{3} }^{2}
+ \Vert \bm{E} \Vert_{ \mathcal{H}_{4} }^{2}
\right)
\geq
\min \left\{
\gamma_{0}, \gamma_{0} \zeta^{2}
\right\}
( \bm{x}, \bm{x} )_{ \mathcal{M}^{-1} }.
\end{align*}
\end{small}
The last inequality comes from the fact that
\begin{small}
\begin{align*}
& \Vert \mathrm{div}^{\ast} p \Vert_{ \mathcal{A}_{1}^{-1} } 
= \sup\limits_{ \bm{v} \neq \bm{0} } \frac{ (\mathrm{div}^{\ast} p, \bm{v}) }{ \Vert \bm{v} \Vert_{\mathcal{A}_{1} } }
= \sup\limits_{ \bm{v} \neq \bm{0} } \frac{ ( p, \mathrm{div} \bm{v}) }{ \Vert \bm{v} \Vert_{\mathcal{A}_{1} } }
\geq \zeta \Vert p \Vert_{ \mathcal{H}_{2} }.
\end{align*}
\end{small}
The above estimate leads to the lower bound {\small $\gamma$}. On the other hand, the upper bound $\Gamma$ can be obtained directly by the boundedness of each term.
\end{proof}

To reduce the time and computation cost of {\small $\mathcal{M}_{\mathcal{L}}$}, we replace its diagonal blocks by their spectral equivalent SPD approximations except that of {\small $\bm{B}$}. The implementation is the same as that in remark \ref{remark:divB_free}. Such modification gives rise to the following operator
\begin{small}
\begin{align} \label{eqn:barL-no-stable}
& \widehat{ \mathcal{M} }_{\mathcal{L}} = \begin{pmatrix}
\mathcal{Q}_{1}^{-1} & 0 & 0 & 0 \\
\mathrm{div} &  \mathcal{Q}_{2}^{-1} & 0 & 0 \\
0 & 0 & \alpha k^{-1} \mu_{r}^{-1} \mathcal{I}_{3} & 0 \\
\mathcal{F} & 0 & - \alpha \mathrm{curl}^{\ast} \mu_{r}^{-1} &  \mathcal{Q}_{4}^{-1}
\end{pmatrix}^{-1}.
\end{align}
\end{small}  
We note that, for the magnetic field, we can apply the same approach with \eqref{eqn:vBm} at each preconditioning step in order to preserve divergent-free condition exactly on the discrete level. And we can prove a similar conclusion to that in Theorem \ref{thm:lower_exact_solve} for $\widehat{ \mathcal{M} }_{\mathcal{L}}$ using a different inner product in the analysis.

%%%%%%%%%%%
% new theorem and proof for inexact solve.
\begin{theorem}\label{thm:lower_inexact_solve}
If {\small $k \leq k_{0}$} and the condition \eqref{def:Qx} holds, then there exist constants {\small $\gamma$} and {\small $\Gamma$} that are independent of  the mesh size {\small $h$}, time step size {\small $k$}, and physical parameters {\small $Rm$}, {\small $s$}, {\small $\mu_r$}, and {\small $\sigma_r$}, such that for all {\small $\bm{x} \neq \bm{0}$}, the condition \eqref{def:FOV} holds with the inner product {\small $(x, y)_{\mathcal{M}^{-1}} $} induced by {\small $\mathcal{M} = 
\mathrm{diag} \left(
\mathcal{Q}_{1}, \mathcal{Q}_{2},\mathcal{H}_{3}^{-1}, \mathcal{Q}_{4} 
\right)$} provided that 
{\small $\Vert \mathcal{I}_{1} - \mathcal{Q}_{1} \mathcal{A}_{1}  \Vert_{ \mathcal{A}_{1} } \leq \rho$}, with {\small $0 \leq \rho < 0.289$}.
\end{theorem}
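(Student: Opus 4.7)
The plan is to adapt the argument of Theorem \ref{thm:lower_exact_solve} to the inexact-solve setting, following the template established by Theorem \ref{thm:toy_model_inexact}. First I would form $\widehat{\mathcal{M}}_{\mathcal{L}} \mathcal{A}$ by block multiplication. Because the off-diagonal structure of $\widehat{\mathcal{M}}_{\mathcal{L}}^{-1}$ matches that of $\mathcal{M}_{\mathcal{L}}^{-1}$, the resulting operator has the same pattern as in the exact-solve proof, but with $\mathcal{Q}_1$ in place of $\mathcal{A}_1^{-1}$, $\mathcal{Q}_2$ in place of $k\mathcal{I}_2$, $\mathcal{Q}_4$ in place of $\mathcal{H}_4^{-1}$, and with extra terms of the form $\mathcal{Q}_i(\mathcal{I}_i - \mathcal{Q}_i^{-1}\mathcal{H}_i^{-1})\,(\cdots)$ arising from the imperfect inversion of $\mathcal{A}_1$. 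Importantly, the $\bm{B}$--row is unchanged, since we keep $\mathcal{Q}_3^{-1} = \mathcal{H}_3$ there, so the divergence-free preservation argument (and the identity $\mathrm{div}\,\mathrm{curl} = 0$) still applies.

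Next I would compute $(\bm{x}, \widehat{\mathcal{M}}_{\mathcal{L}} \mathcal{A}\,\bm{x})_{\mathcal{M}^{-1}}$ with $\mathcal{M}^{-1} = \mathrm{diag}(\mathcal{Q}_1^{-1}, \mathcal{Q}_2^{-1}, \mathcal{H}_3, \mathcal{Q}_4^{-1})$. The diagonal contributions give $\|\bm{u}\|_{\mathcal{A}_1}^2$ (up to the norm-equivalence from \eqref{def:Qx}), $\|\mathrm{div}^{\ast} p\|_{\mathcal{Q}_1}^2$, $\|\bm{B}\|_{\mathcal{H}_3}^2$ plus a curl-cross term, and $s\|\bm{E}\|_{\sigma_r}^2 + \alpha k\|\mathrm{curl}\,\bm{E}\|_{\mu_r^{-1}}^2 - \|\mathcal{F}^{\ast}\bm{E}\|_{\mathcal{Q}_1}^2$. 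The cross terms between $\bm{u}$ and $p$ and between $\bm{u}$ and $\bm{E}$ pick up a factor of $(1+\rho)$ because of the estimate
\begin{equation*}
\bigl|(\mathcal{Q}_1 \mathcal{A}_1 \bm{u},\, \mathcal{B}^{\ast} y)\bigr| \;\leq\; \|\bm{u}\|_{\mathcal{A}_1}\,\|\mathcal{A}_1 \mathcal{Q}_1 \mathcal{A}_1 \bm{u}\|_{\mathcal{A}_1^{-1}}^{1/2}\,(\cdots) \;\leq\; (1+\rho)\,\|\bm{u}\|_{\mathcal{A}_1}\,\|\mathcal{B}^{\ast} y\|_{\mathcal{Q}_1},
\end{equation*}
exactly as in the proof of Theorem \ref{thm:toy_model_inexact}. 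For the $\mathcal{F}$-coupling I would reuse the bound $\|\mathcal{F}^{\ast}\bm{E}\|_{\mathcal{Q}_1} \leq (1+\rho)^{1/2}\|\mathcal{F}^{\ast}\bm{E}\|_{\mathcal{A}_1^{-1}} \leq (1+\rho)^{1/2} (s/2)^{1/2}\|\bm{E}\|_{\sigma_r}$ from $k \leq k_0$, while the $\bm{B}$--$\mathrm{curl}\,\bm{E}$ cross term is handled exactly as in Theorem \ref{thm:lower_exact_solve} (the $\bm{B}$--block is inverted exactly).

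Collecting everything, I would reduce the lower bound to a quadratic form in $\xi_1 = \|\bm{u}\|_{\mathcal{A}_1}$, $\xi_2 = \|\mathrm{div}^{\ast}p\|_{\mathcal{Q}_1}$, $\xi_3 = \|\bm{B}\|_{\mathcal{H}_3}$, $\xi_4 = \sqrt{s}\,\|\bm{E}\|_{\sigma_r}$, $\xi_5 = \sqrt{\alpha k}\,\|\mathrm{curl}\,\bm{E}\|_{\mu_r^{-1}}$, whose coefficient matrix is the $5\times 5$ matrix appearing in Theorem \ref{thm:lower_exact_solve} modified by replacing the off-diagonal entries $-1/2$ and $-1/(2\sqrt{2})$ in rows and columns involving $\xi_1$ by $-(1+\rho)/2$ and $-(1+\rho)/(2\sqrt{2})$. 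The positivity threshold $\rho < 0.289$ arises from requiring that this perturbed matrix remain positive definite; I would verify this by checking leading principal minors (Sylvester's criterion), obtaining a universal constant $\gamma_0 > 0$. Combining with the inf-sup bound $\|\mathrm{div}^{\ast} p\|_{\mathcal{Q}_1} \geq (1+\rho)^{-1/2}\|\mathrm{div}^{\ast} p\|_{\mathcal{A}_1^{-1}} \geq (1+\rho)^{-1/2}\zeta\|p\|_{\mathcal{H}_2}$ and \eqref{def:Qx} yields the desired lower FOV constant $\gamma$.

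The upper bound $\Gamma$ is routine: each entry of $\widehat{\mathcal{M}}_{\mathcal{L}} \mathcal{A}$ is bounded in the appropriate operator norm using \eqref{def:Qx}, Cauchy--Schwarz, the inf-sup bound on $\mathcal{B}$, and the bound $k \leq k_0$. The main obstacle is bookkeeping: writing down $\widehat{\mathcal{M}}_{\mathcal{L}}\mathcal{A}$ cleanly, tracking every $(1+\rho)$ factor through the $\mathcal{Q}_1$-weighted norms, and then verifying by a careful Sylvester/minor computation that the specific threshold $\rho < 0.289$ suffices for positive definiteness of the perturbed quadratic form.
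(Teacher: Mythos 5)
Your proposal is correct and follows essentially the same route as the paper: form $\widehat{\mathcal{M}}_{\mathcal{L}}\mathcal{A}$ explicitly, use $\Vert \mathcal{I}_{1}-\mathcal{Q}_{1}\mathcal{A}_{1}\Vert_{\mathcal{A}_{1}}\leq\rho$ to trade $\mathcal{A}_{1}^{-1}$-norms for $\mathcal{Q}_{1}$-norms at the cost of $(1\pm\rho)$ factors, bound the quadratic form below by a perturbed $5\times 5$ form in the same variables $\xi_{1},\dots,\xi_{5}$, and close with positive definiteness for $\rho<0.289$ plus the inf-sup condition and \eqref{def:Qx}. One bookkeeping point: your own bound $\Vert\mathcal{F}^{\ast}\bm{E}\Vert_{\mathcal{Q}_{1}}^{2}\leq(1+\rho)\frac{s}{2}\Vert\bm{E}\Vert_{\sigma_{r}}^{2}$ also changes the \emph{diagonal} $(4,4)$ entry from $1/2$ to $(1-\rho)/2$ (as in the paper's matrix), not only the off-diagonal entries in the $\xi_{1}$ row and column, and this must be included when checking the minors that produce the threshold $0.289$.
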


\begin{proof}
By calculation, it can be seen that
\begin{small}
\begin{align*}
& \widehat{\mathcal{M}}_{\mathcal{L}} \mathcal{A} = 
\begin{pmatrix}
\mathcal{Q}_{1} \mathcal{A}_{1}
& - \mathcal{Q}_{1}\mathrm{div}^{\ast} & 0 
& \mathcal{Q}_{1} \mathcal{F}^{\ast} \\
 \mathcal{Q}_{2} \mathrm{div} ( \mathcal{I}_{1} - \mathcal{Q}_{1}\mathcal{A}_{1} )
& \mathcal{Q}_{2} \widehat{ \mathcal{S} }_{2} 
& 0 & - \mathcal{Q}_{2} \mathrm{div} \mathcal{Q}_{1} \mathcal{F}^{\ast} \\
0 & 0 & \mathcal{I}_{3} & k \mathrm{curl} \\
 \mathcal{Q}_{4}\mathcal{F} (  \mathcal{I}_{1} -\mathcal{Q}_{1} \mathcal{A}_{1} )
&  \mathcal{Q}_{4} \mathcal{F} \mathcal{Q}_{1} \mathrm{div}^{\ast}
& 0
& \mathcal{Q}_{4}\widehat{ \mathcal{S} }_{4}
\end{pmatrix}.
\end{align*}
\end{small}
where {\small $\widehat{ \mathcal{S} }_{2} = \mathrm{div} \mathcal{Q}_{1}\mathrm{div}^{\ast}$}, {\small $\widehat{ \mathcal{S} }_{4} = s \sigma_{r} \mathcal{I}_{4}
+ \alpha k \mathrm{curl}^{\ast} \mu_{r}^{-1} \mathrm{curl} 
- \mathcal{F} \mathcal{Q}_{1} \mathcal{F}^{\ast}$}. Therefore, for any {\small $\bm{x} = ( \bm{u}, p, \bm{B}, \bm{E} )^{T}$},
\begin{small}
\begin{align*}
( \bm{x}, \widehat{ \mathcal{M} }_{\mathcal{L}} \mathcal{A} \bm{x} )_{\mathcal{M}^{-1}}
& = ( \bm{u}, \mathcal{A}_{1} \bm{u}) 
- ( \bm{u},  \mathrm{div}^{\ast} p) 
+ (\bm{u}, \mathcal{F}^{\ast} \bm{E}) 
+ (p,  \mathrm{div} ( \mathcal{I}_{1} - \mathcal{Q}_{1}\mathcal{A}_{1} ) \bm{u} ) 
+ (p, \widehat{ \mathcal{S} }_{2} p) \\
&\quad 
- (p, \mathrm{div} \mathcal{Q}_{1}\mathcal{F}^{\ast} \bm{E} ) 
+ ( \bm{B}, \bm{B} )_{ \mathcal{H}_{3} } 
+ ( \bm{B}, k \mathrm{curl} \bm{E} )_{ \mathcal{H}_{3} } 
+ ( \bm{E},  \mathcal{F} ( \mathcal{I}_{1} -  \mathcal{Q}_{1}\mathcal{A}_{1}) \bm{u}) \\
& \quad 
+ ( \bm{E}, \mathcal{F} \mathcal{Q}_{1} \mathrm{div}^{\ast} p) 
+ ( \bm{E}, \widehat{ \mathcal{S} }_{4} \bm{E} ) 
\\
& \geq
\Vert \bm{u} \Vert_{\mathcal{A}_{1}}^{2}
- \Vert \bm{u} \Vert_{\mathcal{A}_{1}}
\Vert \mathcal{F}^{\ast} \bm{E} \Vert_{\mathcal{A}_{1}^{-1}}
- \Vert \mathcal{A}_{1} \bm{u} \Vert_{\mathcal{Q}_{1}}
\Vert \mathrm{div}^{\ast} p \Vert_{\mathcal{Q}_{1} } 
+ \Vert \mathrm{div}^{\ast} p \Vert_{\mathcal{Q}_{1} }^{2} 
\\
& \quad
+ \Vert \bm{B} \Vert_{ \mathcal{H}_{3} }^{2}
- \Vert \bm{B} \Vert_{ \mathcal{H}_{3} }
\sqrt{\alpha k} \Vert \mathrm{curl} \bm{E} \Vert_{ \mu_{r}^{-1} }
- \rho
\Vert \bm{u} \Vert_{\mathcal{A}_{1}}
\Vert \mathcal{F}^{\ast} \bm{E} \Vert_{\mathcal{A}_{1}^{-1}} \\
& \quad
+ s \Vert \bm{E} \Vert_{ \sigma_{r} }^{2} 
+ \alpha k \Vert \mathrm{curl} \bm{E} \Vert_{ \mu_{r}^{-1} }^{2}
- \Vert \mathcal{F}^{\ast} \bm{E} \Vert_{\mathcal{Q}_{1} }^{2}.
\end{align*}
\end{small}
Since {\small $
\Vert \mathcal{I}_{1} - \mathcal{Q}_{1} \mathcal{A}_{1}  \Vert_{ \mathcal{A}_{1} } \leq \rho$} implies
\begin{small}
\begin{align*}
& (1-\rho) (\bm{v}, \bm{v})_{\mathcal{A}_{1}^{-1}} \leq (\bm{v}, \bm{v})_{\mathcal{Q}_{1}} 
\leq (1+\rho) (\bm{v}, \bm{v})_{\mathcal{A}_{1}^{-1}}, \\
& (1+\rho)^{-1} (\bm{v}, \bm{v})_{\mathcal{A}_{1} }
\leq (\bm{v}, \bm{v})_{\mathcal{Q}_{1}^{-1} }
\leq (1-\rho)^{-1} (\bm{v}, \bm{v})_{\mathcal{A}_{1}},
\end{align*}
\end{small}
and {\small $\Vert \mathcal{F}^{\ast} \bm{E}  \Vert_{\mathcal{A}_{1}^{-1}} \leq \frac{1}{\sqrt{2}} \sqrt{s} \Vert \bm{E} \Vert_{\sigma_r}$} (it is shown in the proof of Theorem \ref{thm:lower_exact_solve}), we get
\begin{small}
\begin{align*}
& ( \bm{x}, \widehat{ \mathcal{M} }_{\mathcal{L}} \mathcal{A} \bm{x} )_{\mathcal{M}^{-1}}
\geq
\Vert \bm{u} \Vert_{\mathcal{A}_{1}}^{2}
- \frac{\sqrt{s}}{\sqrt{2} } \Vert \bm{u} \Vert_{\mathcal{A}_{1}}
\Vert \bm{E} \Vert_{\sigma_{r} }
- (1+\rho)
\Vert \bm{u} \Vert_{\mathcal{A}_{1}}
\Vert \mathrm{div}^{\ast} p \Vert_{\mathcal{Q}_{1} } 
+ \Vert \mathrm{div}^{\ast} p \Vert_{\mathcal{Q}_{1} }^{2} \\
& \qquad \qquad \qquad
+ \Vert \bm{B} \Vert_{ \mathcal{H}_{3} }^{2}
- \Vert \bm{B} \Vert_{ \mathcal{H}_{3} }
\sqrt{\alpha k} \Vert \mathrm{curl} \bm{E} \Vert_{ \mu_{r}^{-1} }
- \rho \frac{\sqrt{s}}{\sqrt{2} }
\Vert \bm{u} \Vert_{\mathcal{A}_{1}}
\Vert \bm{E} \Vert_{\sigma_{r} } \\
& \qquad \qquad \qquad
+ \frac{1-\rho}{2} s \Vert \bm{E} \Vert_{ \sigma_{r} }^{2} 
+ \alpha k \Vert \mathrm{curl} \bm{E} \Vert_{ \mu_{r}^{-1} }^{2} \\
& \qquad
= \begin{pmatrix}
\xi_{1} \\
\xi_{2} \\
\xi_{3} \\
\xi_{4} \\
\xi_{5} 
\end{pmatrix}^{T}
\begin{pmatrix}
1 & -(1+\rho)/2 & 0 & -(1+\rho)/2\sqrt{2} & 0 \\
-(1+\rho)/2 & 1 & 0 & 0 & 0 \\
0 & 0 & 1 & 0 & -1/2 \\
-(1+\rho)/2\sqrt{2} & 0 & 0 & (1-\rho)/2 & 0 \\
0 & 0 & -1/2 & 0 & 1
\end{pmatrix}
\begin{pmatrix}
\xi_{1} \\
\xi_{2} \\
\xi_{3} \\
\xi_{4} \\
\xi_{5} 
\end{pmatrix},
\end{align*}
\end{small}
where {\small $\xi_{1} = \Vert \bm{u} \Vert_{\mathcal{A}_{1} } $}, {\small $\xi_{2} = \Vert \mathrm{div}^{\ast} p \Vert_{ \mathcal{Q}_{1} }$}, {\small $\xi_{3} = \Vert \bm{B} \Vert_{ \mathcal{H}_{3} }$}, {\small $\xi_{4} = \sqrt{s} \Vert \bm{E} \Vert_{ \sigma_{r} }$} and {\small $\xi_{5} = \sqrt{\alpha k} \Vert \mathrm{curl} \bm{E} \Vert_{ \mu_{r}^{-1} }$}. we can verify that the matrix in the middle is SPD when {\small $0 \leq \rho < 0.289$}. Therefore, there exists a constant {\small $\gamma_{0} > 0$} such that
\begin{small}
\begin{align*}
( \bm{x}, \widehat{ \mathcal{M} }_{\mathcal{L}} \mathcal{A} \bm{x} )_{\mathcal{M}^{-1} } 
& \geq
\gamma_{0} \left(
\Vert \bm{u} \Vert_{\mathcal{A}_{1} }^{2}
+ (1-\rho) \Vert \mathrm{div}^{\ast} p \Vert_{ \mathcal{A}_{1}^{-1} }^{2}
+ \Vert \bm{B} \Vert_{ \mathcal{H}_{3} }^{2}
+ \Vert \bm{E} \Vert_{ \mathcal{H}_{4} }^{2}
\right) \\
& \geq
\gamma_{0} \left(
\Vert \bm{u} \Vert_{\mathcal{A}_{1} }^{2}
+ (1-\rho) \zeta^{2} \Vert p \Vert_{ \mathcal{H}_{2} }^{2}
+ \Vert \bm{B} \Vert_{ \mathcal{H}_{3} }^{2}
+ \Vert \bm{E} \Vert_{ \mathcal{H}_{4} }^{2}
\right) \\
& \geq
\min \left\{
\gamma_{0} (1-\rho), 
\gamma_{0} (1-\rho) \zeta^{2} c_{1, 2}^{-1},
\gamma_{0} c_{1, 3}^{-1}, 
\gamma_{0} c_{1, 4}^{-1}
\right\}
( \bm{x}, \bm{x} )_{\mathcal{M}^{-1} },
\end{align*}
\end{small}
which leads to the lower bound {\small $\gamma$}.  On the other hand, the upper bound {\small $\Gamma$} can be obtained directly from the boundedness of each term.
\end{proof}

\begin{remark}
Condition {\small $
\Vert \mathcal{I}_{1} - \mathcal{Q}_{1} \mathcal{A}_{1}  \Vert_{ \mathcal{A}_{1} } \leq \rho
$}, with {\small $0 \leq \rho < 0.289$}, means that we should solve the velocity block accurately enough.  We comment that this condition can be relaxed by introducing parameters $\rho_1$ and $\rho_2$ in front of the diagonal block {\small $\mathcal{Q}_{2}$} and {\small $\mathcal{Q}_{4}$}, respectively.  In that case, the proof is very similar to that of the Theorem \ref{thm:lower_inexact_solve} by choosing different numbers in the Young's inequality and picking appropriate $\rho_1$ and $\rho_2$. Thus, we omit the proof.  
\end{remark}

% generalization.
\section{Generalizations to other discretizations} \label{sec:general}
In this section, we generalize the robust preconditioners based on the well-posedness to other discretizations of incompressible MHD systems that have been developed in the literature. As examples, we will give detailed discussions on the discretization proposed by Gunzburger et al. \cite{Gunzburger.M;Meir.A;Peterson.J.1991a} for the stationary incompressible MHD system and that for non-stationary incompressible MHD system given in \cite{Bris.C;Lelievre.T.2006a,Prohl.A.2008a}. Generalizations to other discretizations are similar.

\subsection{$H^{1}$ discretization for a stationary incompressible MHD system}
The stationary incompressible MHD equations discussed in \cite{Gunzburger.M;Meir.A;Peterson.J.1991a} is:
\begin{small}
\begin{align}
\begin{cases}
& \bm{u} \cdot \nabla \bm{u} - \frac{1}{Re} \Delta \bm{u} + \nabla p - \alpha ( \nabla \times \bm{B} ) \times \bm{B} = \bm{f},  \\
& \nabla \cdot \bm{u} = 0,  \\
& -\nabla \times ( \bm{u} \times \bm{B} ) + \frac{1}{Rm} \nabla \times \nabla \times \bm{B} = 0,  \\
& \nabla \cdot \bm{B} = 0,
\end{cases}\label{eq:stationary1991}
\end{align}
\end{small}
with homogeneous boundary conditions
\begin{small}
\begin{align*}
& \bm{u} = 0, \quad
\bm{n} \cdot \bm{B} = 0, \quad
\bm{n} \times ( \nabla \times \bm{B} ) = \bm{0}.
\end{align*}
\end{small}
The finite element space for {\small $(\bm{u}, p)$} is still {\small $\bm{V}_{h} \times Q_{h}$}. However, due to the boundary condition of {\small $\bm{B}$}, a special Sobol\'{e}v space is used,
\begin{small}
\begin{align*}
H_{n}^{1} (\Omega)^3 = \left\{ \bm{u} \in H^{1}(\Omega)^{3}, ~ \bm{n} \cdot \bm{u} \mid_{\partial \Omega} = 0  \right\},
\end{align*}
\end{small}
and norm for this space is
\begin{small}
\begin{align} \label{def:norm-H1n}
& \Vert \bm{u} \Vert^{2}_{n,1} = 
\frac{\alpha}{Rm} 
\left[
\Vert \nabla \cdot \bm{u} \Vert^{2}
+ \Vert \nabla \times \bm{u} \Vert^{2}
\right], 
~ \forall \bm{u} \in H_{n}^{1}(\Omega)^3.
\end{align}
\end{small}
Then the finite element space for {\small $\bm{B}$} is {\small $H_{n,h}^1(\Omega)^3 \subset H_n^1(\Omega)^3$}.

The full discretization based on symmetric Picard linearization of \eqref{eq:stationary1991} is: find {\small $(\bm{u}_{h}, \bm{B}_{h}, p_{h} ) \in \bm{V}_{h} \times H_{n,h}^{1}(\Omega)^{3} \times Q_{h}$} such that for any {\small $(\bm{v}_{h}, \bm{C}_{h}, q_{h} ) \in \bm{V}_{h} \times H_{n,h}^{1}(\Omega)^{3} \times Q_{h}$},
\begin{small}
\begin{align*}
\begin{cases}
& d( \bm{u}_{h}^{-},  \bm{u}_{h}^{-},  \bm{v}_{h} )     
  + \frac{1}{Re} (\nabla \bm{u}_{h}, \nabla \bm{v}_{h} ) 
  - \alpha (\nabla\times \bm{B}_{h},\bm{B}^{-}_{h} \times \bm{v}_{h} ) 
  - (p_{h},\nabla\cdot \bm{v}_{h}) 
 = (\bm{f}_{h},\bm{v}_{h}),  \\
& - \alpha ( \bm{u}_{h} \times\bm{B}^{-}_{h}, \nabla\times \bm{C}_{h} )
+ \frac{\alpha}{Rm} \left[ (\nabla\times \bm{B}_{h}, \nabla\times \bm{C}_{h} )
+(\nabla\cdot \bm{B}_{h}, \nabla\cdot \bm{C}_{h} ) \right]= {0}, \\ 
& (\nabla\cdot \bm{u}_{h}, q_{h}) =0,
\end{cases}
\end{align*}
\end{small}
where {\small $\bm{u}_{h}^{-}$} and {\small $\bm{B}_{h}^{-}$} stand for solutions from last nonlinear iteration step. The operator form of this system is (we flip the signs of equations to make the system symmetric):
\begin{small}
\begin{align*}
\mathcal{L}_{1} \bm{x} = \bm{F}_{1}  & \Rightarrow
\begin{pmatrix}
- Re^{-1} \Delta & - \mathrm{div}^{\ast} & \alpha \bm{B}^{-} \times \mathrm{curl} \\
- \mathrm{div} & 0 & 0 \\
\left( \alpha \bm{B}^{-} \times \mathrm{curl} \right)^{\ast} & 0 &  
-\frac{\alpha}{Rm} \left( \mathrm{div}^{\ast} \mathrm{div} + \mathrm{curl}^{\ast} \mathrm{curl} \right)
\end{pmatrix}
\begin{pmatrix}
\bm{u} \\ p \\ \bm{B}
\end{pmatrix} = \begin{pmatrix}
\bm{h}_{1} \\ g \\ -\bm{h}_{2}
\end{pmatrix}.
\end{align*}
\end{small}

In \cite{Gunzburger.M;Meir.A;Peterson.J.1991a}, it is proved that the above discretization is well-posed with respect to the canonical norms of {\small $\bm{V}_{h}$}, {\small $Q_{h}$} and the norm defined by \eqref{def:norm-H1n} for {\small $H_{n,h}^1(\Omega)^3$}.  This implies robust norm-equivalent preconditioner as follows:
\begin{small}
\begin{align*}
\mathcal{B}_{1} & = 
\begin{pmatrix}
 - Re^{-1} \Delta & 0 & 0 \\
 0 & \mathcal{I}_{2} & 0 \\
0 & 0 & \frac{\alpha}{Rm} \left( \mathrm{div}^{\ast} \mathrm{div} + \mathrm{curl}^{\ast} \mathrm{curl} \right)
\end{pmatrix}^{-1}. 
\end{align*}
\end{small}
As mentioned before, we can design FOV-equivalent preconditioner for the original non-symmetric system, which is  given by
\begin{small}
\begin{align*}
\mathcal{M}_{\mathcal{L}, 1} & = 
\begin{pmatrix}
 - Re^{-1} \Delta & 0 & 0 \\
 \mathrm{div} & \mathcal{I}_{2} & 0 \\
-\left( \alpha \bm{B}^{-} \times \mathrm{curl} \right)^{\ast} & 0 & 
\frac{\alpha}{Rm} 
\left( \mathrm{div}^{\ast} \mathrm{div} + \mathrm{curl}^{\ast} \mathrm{curl} \right) 
\end{pmatrix}^{-1}.
\end{align*}
\end{small}
Use the similar argument in previous sections, we can show that these preconditioner is uniform with respect to {\small $h$}. The first diagonal block is Poisson-like, for which multigrid methods are efficient. Jacobi iterative method or other simple iterative methods are efficient for the second block. And the third diagonal block is a vector Laplacian, for which multigrid methods are effective as well.  

\subsection{$H(\mathrm{curl})$ discretization for a non-stationary incompressible MHD system}
The mathematical model discussed in \cite{Bris.C;Lelievre.T.2006a,Prohl.A.2008a} is
\begin{small}
\begin{align}
\begin{cases}
& \frac{\partial \bm{u}}{\partial t} +  \bm{u}\cdot \nabla \bm{u} 
-\frac{1}{Re} \Delta \bm{u} + \nabla p 
-  \alpha (\nabla\times\bm{B}) \times \bm{B}=\bm{f}, \\
& \nabla\cdot \bm{u} =0, \\
& \frac{\partial \bm{B}}{\partial t} - \nabla\times (\bm{u}\times\bm{B})
+ \frac{1}{Rm} \nabla\times \nabla\times \bm{B} - \nabla r= 0, \\
& \nabla\cdot \bm{B} = 0,
\end{cases}\label{eq:mhd_schotzau}
\end{align}
\end{small}
with boundary conditions
\begin{small}
\begin{align*}
& \bm{u} = 0, \quad
\bm{n} \times \bm{B} = \bm{0}.
\end{align*}
\end{small}
The full discretization based on symmetric Picard linearization of \eqref{eq:mhd_schotzau} is: find {\small $(\bm{u}_{h}^{n}, p_{h}^{n}, \bm{B}_{h}^{n}, r_{h}^{n} ) \in \bm{V}_{h} \times Q_{h} \times \bm{V}_{h}^{c} \times H^{1}_{h}$} such that for any {\small $(\bm{v}_{h}, q_{h}, \bm{C}_{h}, l_{h} ) \in \bm{V}_{h} \times Q_{h} \times \bm{V}_{h}^{c} \times H^{1}_{h}$},
\begin{small}
\begin{align}
\begin{cases}
& k^{-1} \left( \boldsymbol{u}_h^{n}-\boldsymbol{u}_h^{n-1}, \boldsymbol{v}_{h} \right) 
+  d( \bm{u}_{h}^{n-1}, \bm{u}_{h}^{n-1}, \bm{v}_{h} )
+ {1 \over Re} \left(\nabla \boldsymbol{u}_h^{n}, \nabla \boldsymbol{v}_h \right)  \\
& \qquad
 - \alpha \left( ( \nabla \times \bm{B}_{h} ) \times \boldsymbol{B}_h^{n-1},\boldsymbol{v}_h \right)
 - \left( p_h^{n},\nabla\cdot \boldsymbol{v}_h \right) 
= \left( \boldsymbol{f}_h^{n},\boldsymbol{v}_h \right), \\
& \alpha k^{-1} \left( \boldsymbol{B}_h^{n} - \boldsymbol{B}_h^{n-1} , 
\boldsymbol{C}_{h} \right) 
- \alpha \left(  \boldsymbol{u}_h^{n} \times \bm{B}_{h}^{n-1}, \nabla \times \boldsymbol{C}_{h} \right) 
+ \frac{\alpha}{Rm} ( \nabla \times \bm{B}_{h}^{n}, \nabla \times \bm{C}_{h} ) \\
& \qquad
- \alpha ( \bm{C}_{h}, \nabla r_{h} )
= 0, \\
& \left( \nabla\cdot \boldsymbol{u}_h^{n}, q_h \right) =0, \\
& \alpha ( \bm{B}_{h}^{n}, \nabla l_{h} ) = 0.
\end{cases}\label{eq:weak_mhd_schotzau}
\end{align}
\end{small}

The operator form of discretization \eqref{eq:weak_mhd_schotzau} is (we flip the signs of equations to make the system symmetric):
\begin{small}
\begin{align*}
& \mathcal{L}_{2} \bm{x} = \bm{F} \Rightarrow
\begin{pmatrix}
k^{-1} \mathcal{I}_{1} - Re^{-1} \Delta &  - \mathrm{div}^{\ast} & \alpha \bm{B}^{-} \times \mathrm{curl} & 0 \\
- \mathrm{div} & 0 & 0 & 0 \\
( \alpha \bm{B}^{-} \times \mathrm{curl} )^{\ast} & 0 & 
- \left( \alpha k^{-1} \mathcal{I}_{3} + \frac{\alpha}{Rm} \mathrm{curl}^{\ast} \mathrm{curl} \right) 
 & \alpha \mathrm{grad} \\ 
0 &  0 & ( \alpha \mathrm{grad} )^{\ast} & 0 
\end{pmatrix} 
\begin{pmatrix}
\bm{u} \\ p \\ \bm{B} \\ r
\end{pmatrix} = \begin{pmatrix}
\bm{h}_{1} \\ g_{1} \\ -\bm{h}_{2} \\ g_{2} 
\end{pmatrix}.
\end{align*}
\end{small}

Based on the analysis in the literature \cite{Bris.C;Lelievre.T.2006a,Prohl.A.2008a}, we can prove that discretization \eqref{eq:weak_mhd_schotzau} is well-posed with respect to weighted norms in the finite element spaces {\small $\bm{V}_{h}, \ Q_{h},  \ \bm{V}_{h}^{c}$}, and {\small $ H^{1}_{h}$}. Therefore, robust norm-equivalent preconditioner for this system is:
\begin{small}
\begin{align*}
\mathcal{B}_{2} & = 
\begin{pmatrix}
k^{-1} \mathcal{I}_{1} - Re^{-1}\Delta + r \mathrm{div}^{\ast}\mathrm{div} & 0 & 0 & 0 \\
0 & k \mathcal{I}_{2} & 0 & 0 \\
0 & 0 & \alpha k^{-1} \mathcal{I}_{3} + \frac{\alpha}{Rm} \mathrm{curl}^{\ast} \mathrm{curl} & 0 \\
0 & 0 & 0 & k \alpha^{-1} ( \mathcal{I}_{4} - \Delta )
\end{pmatrix}^{-1}.
\end{align*}
\end{small}
And an FOV-equivalent preconditioner for the original non-symmetric system is:
\begin{small}
\begin{align*}
\mathcal{M}_{\mathcal{L}, 2 } & = 
\begin{pmatrix}
k^{-1} \mathcal{I}_{1} - Re^{-1}\Delta + r \mathrm{div}^{\ast}\mathrm{div} & 0 & 0 & 0 \\
\mathrm{div} & k \mathcal{I}_{2} & 0 & 0 \\
- ( \alpha \bm{B}^{-} \times \mathrm{curl} )^{\ast} & 0 &
\alpha k^{-1} \mathcal{I}_{3} + \frac{\alpha}{Rm} \mathrm{curl}^{\ast} \mathrm{curl}  & 0 \\
0 & 0 & ( \alpha \mathrm{grad} )^{\ast} & k \alpha^{-1} ( \mathcal{I}_{4} - \Delta )
\end{pmatrix}^{-1}.
\end{align*}
\end{small}

When {\small $k$} is sufficiently small, we can show these preconditioner is robust with respect to discretization parameters.  Multigrid methods are effective to precondition the first diagonal block. Jacobi method or other simple iterative methods are efficient for the second diagonal block. HX preconditioner is powerful for the third diagonal block.  And the fourth diagonal block corresponds to a Poisson problem for which multigrid methods are powerful as well.

% numerics
\section{Numerical experiments}\label{sec:numeric_experiments}
In this section, we carry out numerical experiments for both $2$D and $3$D incompressible MHD systems to demonstrate the robustness of the preconditioners introduced in previous sections.  As mentioned before, we mainly focus on the structure-preserving discretization without the stabilization term, namely, scheme \eqref{eqn:problem-a0-1}.

In both $2$D and $3$D implementation, we use {\small $P_2$}-{\small $P_0$} to discretize the velocity and the pressure, and use the lowest order elements to discretize the electric and the magnetic field. We implement the structure-preserving discretization based on the FEniCs \cite{Logg.A;Mardal.K;Wells.G.2012a} and the block preconditioners based on the FASP package \cite{Xu.J.a}. We run all the experiments on a Dell workstation with $12$ GB total memory.

\subsection{Implementation of block preconditioner}
In this section, we discuss implementation details of the proposed block preconditioners when they are used to precondition the Krylov methods such as MINRES, GMRES, and flexible GMRES (FGMRES). As we know, inverting block diagonal or triangular preconditioners ends up with inverting diagonal blocks. Therefore, the main implementation issue is how to invert those diagonal blocks. When inverting each diagonal block exactly, we call direct solvers implemented in the UMFPack package \cite{Davidson.P.2001a}.  While when inverting each diagonal block approximately, we mainly call preconditioned conjugate gradient (PCG) method with a relative large tolerance, since each block in our preconditioners is SPD. For example, the tolerance of PCG in terms of {\small $l_2$} norm of the relative residual is {\small $10^{-3}$}, which is sufficient to achieve \eqref{def:Qx}.

It is well-known that effective preconditioners for PCG method are necessary in order to achieve overall robustness of the preconditioners and reduce the computational cost as well. In both $2$D and $3$D, since the velocity block is a Poisson-like problem, multigrid methods are good candidates. In our experiments, we use AMG method implemented in FASP package.  For the pressure, the corresponding block is a mass matrix of finite element of {\small $Q_{h}$}, we use Jacobi preconditioner for it. And for the magnetic block, we update it using \eqref{eqn:vBm} without inverting the diagonal block explicitly.  This not only ensures the exactness of the divergence-free condition for the magnetic field but also saves the computation time.

However, the implementations of the diagonal block corresponding to the electric field are different in $2$D and $3$D.  This is because the vector electric field degenerates to a scalar in 2D.  In 3D, the diagonal block for the electric field {\small $\bm{E}$} corresponds to the following PDE 
\begin{small}
\begin{align*}
& \alpha^{2} \mathrm{curl} ~ \mathrm{curl} \bm{E} + s \bm{E} = \bm{f}.
\end{align*}
\end{small}
Therefore, HX preconditioner \cite{Hiptmair.R;Xu.J.2007a} implemented in FASP package is a powerful preconditioner for it.  However, in $2$D, the diagonal block of the electric field {\small $E$} corresponds to {\small $\alpha^{2} \mathrm{rot} ~ \mathrm{curl} E + s E = f$} on the continuous level. 
As {\small $\mathrm{rot}~\mathrm{curl} u = - \Delta u$}, this PDE is actually {\small $- \alpha^{2} \Delta E + s E = f$},
which is a Poisson-like problem.  Obviously, robust preconditioner for the above problem depends on the ratio between the parameter {\small $\alpha$} and {\small $s$}.  When {\small $\alpha$} is relatively large, {\small $-\Delta$} dominates and we use AMG as the preconditioner.  When {\small $\alpha$} is relatively small, the lower order term dominates and we use Jacobi as the preconditioner.  

\subsection{Convergence tests}
We first perform convergence tests to verify the correctness of our code.  For the sake of accuracy, we use $2$-step BDF for the temporal discretization.  All the physical parameters are set to be {\small $1$} in the convergence tests.  The initial conditions, boundary conditions, and right hand sides are computed based on the given exact solutions. The tolerance for the preconditioned Krylov methods is {\small $10^{-10}$}. 

\subsubsection{$2$D Convergence Test}
The exact solutions chosen for the $2$D convergence test are:
\begin{small}
\begin{align*}
\bm{u} & = \begin{pmatrix}
e^{t} \cos y \\ 0
\end{pmatrix}, 
\quad \bm{B} = \begin{pmatrix}
0 \\ \sin t \cos x
\end{pmatrix}, 
\quad p = -x \cos y, 
\quad E = \sin x. 
\end{align*}
\end{small}

\vskip-15pt
\begin{figure}[H]
    \centering
        \subfigure[Error versus mesh size $h$ ( $k = 0.01$ and $t = 0.1$)]{ \includegraphics[width=0.27\textwidth]{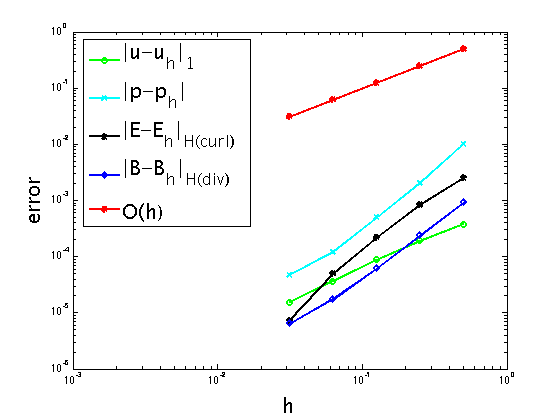} }
        \qquad
        \subfigure[Error versus time step size $k$ ($h = \nicefrac{1}{32}$ and $t = 0.8$)]{ \includegraphics[width=0.27\textwidth]{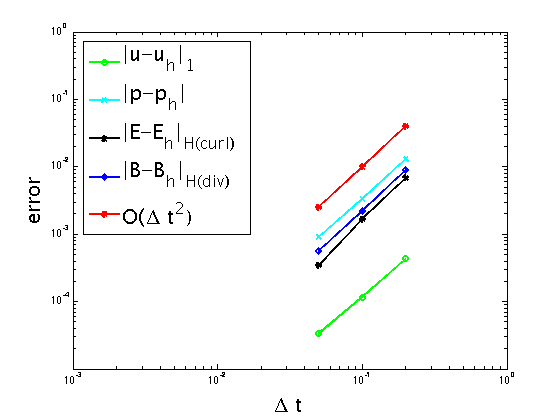} }
        \qquad
        \subfigure[$\Vert \mathrm{div} \bm{B} \Vert$ ($k=0.01$)]{ \includegraphics[width=0.27\textwidth]{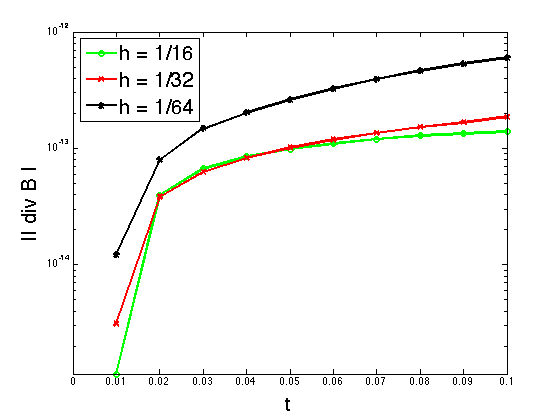} }
        \caption{Numerical results of $2$D convergence test. } \label{fig:convergence_test_rlt2D}
\end{figure}

Based on the results shown in Figure \ref{fig:convergence_test_rlt2D} (a) and (b), we can see that both spatial and temporal errors converge in the optimal order, i.e., {\small $\mathcal{O}(h)$} and {\small $\mathcal{O}(k^2)$}.  We also plot {\small $\Vert \mathrm{div} \bm{B} \Vert$} in Figure \ref{fig:convergence_test_rlt2D} (c) which is about {\small $10^{-12}$} and confirms the exactness of the divergence-free condition due to the structure-preserving discretization.  One comment is that {\small $\Vert \mathrm{div} \bm{B} \Vert$} increases as the time evolves.  This is due to the accumulation of the round-off error during the computation.  We can reduce such accumulation simply by updating {\small $\bm{B}_{h}^{n}$} with
\begin{small}
\begin{align}
& \bm{B}_{h}^{n} = \bm{B}_{h}^{0} - \nabla \times \left( \sum\limits_{i = 1}^{n} k \bm{E}_{h}^{i} \right),
\label{eq:correctB}
\end{align}
\end{small}
at the end of each time step.  \eqref{eq:correctB} is a mathematical equivalent formula to the original system. Therefore, we can preserve the divergence-free condition exactly on the discrete level. 

\subsubsection{$3$D Convergence Test}
The exact solution chosen for this test is:
\begin{small}
\begin{align*}
& \bm{u} = \begin{pmatrix}
e^{t} \cos y \\ 0 \\ 0
\end{pmatrix},
\quad \bm{E} = \begin{pmatrix}
0 \\ \cos x \\ 0
\end{pmatrix},  
\quad \bm{B} = \begin{pmatrix}
0 \\ 0 \\ \sin t \cos x
\end{pmatrix}, 
\quad p = -x \cos y. 
\end{align*} 
\end{small}

\vskip-15pt
\begin{figure}[H]
\centering
\subfigure[Error versus mesh size $h$ ($k = 0.01$ and $t=0.1$)]{ \includegraphics[width=0.27\textwidth]{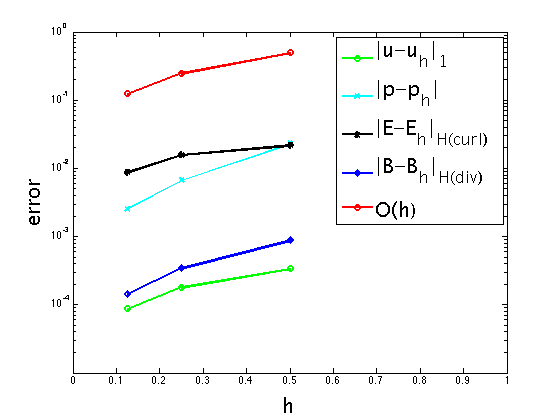} }
\qquad
\subfigure[Error versus time step size $k$ ($h = \nicefrac{1}{12}$ and $t=1$)]{ \includegraphics[width=0.27\textwidth]{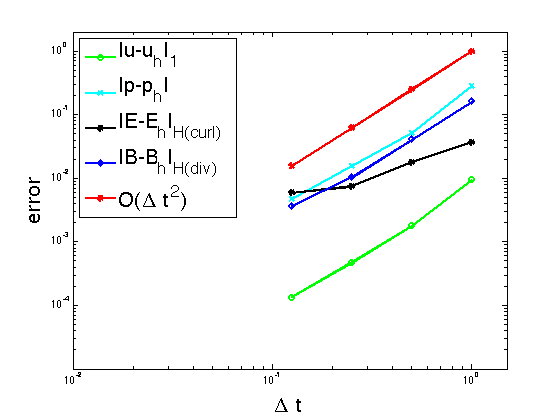} }
\qquad
\subfigure[$\Vert \mathrm{div} \bm{B} \Vert$ ($k=0.01$)]{ \includegraphics[width=0.27\textwidth]{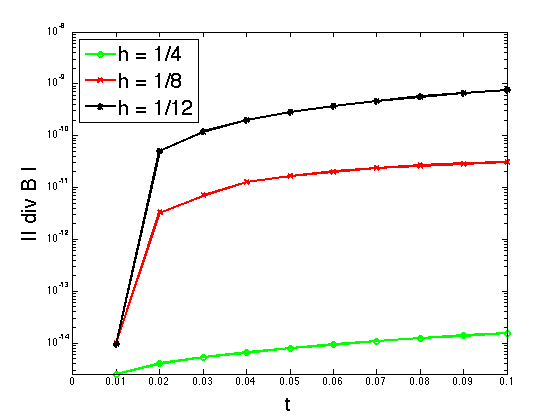} }
\caption{ Numerical results of $3$D convergence test. }  \label{fig:convergence_test_rlt3D}
\end{figure}

Based on the results shown in Figure \ref{fig:convergence_test_rlt3D}, we can see that both spatial and temporal errors converge in the optimal order and the divergence-free condition is preserved. 

\subsection{Benchmark problem: lid-driven cavity}
Next we consider the lid-driven cavity problem, which is a well-known benchmark problem for CFD simulation \cite{Salah.N;Soulaimani.A;Habashi.W.2001a}.  A constant magnetic field {\small $\boldsymbol{B}_{0}$} is applied and coupled with fluids.  When the external imposed magnetic field is zero, the problem becomes the classical hydrodynamic lid-driven cavity problem. Here, we assume that the cavity is a unit square in $2$D and a unit cubic in $3$D.  

\subsection{2D lid-driven cavity}
For the $2$D case, the fluid movement in the cavity is induced by the imposed boundary condition {\small $\boldsymbol{u} = \boldsymbol{u}_{0}$} on the boundary {\small $y = 1$} (figure \ref{fig:geo_liddriven2D}). The constant background magnetic field is {\small $\boldsymbol{B}_{0} = \left( 0, 1 \right)^{T}$}. The velocity field perturbs to the magnetic fields, and the Lorentz force acts on the fluid and changes the motion of the fluid flow. 

\vskip-20pt
\begin{center}
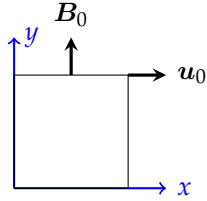

\begin{tikzpicture}
	[cube/.style = {very thick, black}, 
		axis/.style = {->, blue, thick},
		vector/.style = {-stealth, black, very thick}]
% draw the axes
\draw[axis] (0, 0) -- (2, 0) node[anchor=west]{{\small $x$}};
\draw[axis] (0, 0) -- (0, 2) node[anchor=west]{{\small $y$}};

% draw the cube
\begin{scope}[shift = {(0, 0)}]
	% draw the top & bottom
	\draw (0,0) rectangle (1.5,1.5);
\end{scope}

% draw B0
\coordinate (A) at (0.75, 1.5);
\coordinate (B) at (0.75, 2);
\draw[vector] (A) -- (B) node[above] {{\small $\boldsymbol{B}_{0}$}}; 

% draw u0
\coordinate (C) at (1.5, 1.5);
\coordinate (D) at (2, 1.5);
\draw[vector] (C) -- (D) node[right] {{\small $\boldsymbol{u}_{0}$}};
\end{tikzpicture}
\captionof{figure}{Geometry of lid driven cavity}\label{fig:geo_liddriven2D}
\end{center}

Again, we use symmetric Picard iteration in this benchmark test. Parameters for this test are:
\begin{itemize}
\item Time step size {\small $k = 0.01$}, time interval is {\small $[0, 10.0]$}. 
\item Uniform grid is used, with {\small $h = 1/100$}. Number of elements is $2 \times 10^{4}$.
\item The tolerance of preconditioned Krylov methods is {\small $10^{-8}$} and the tolerance of nonlinear iteration is {\small $10^{-6}$}.
\item {\small $\mu_{r} = 1$}, {\small $\sigma_r = 1$}, and {\small $s= 1$}.
\end{itemize}

We perform numerical tests of different Reynolds number ({\small $Re$}) and magnetic Reynolds number ({\small $Rm$}). In Figure \ref{fig:lidcavity_vstream}, we plot stream lines of velocity fields in different cases.  We can see the vortex appears as the Reynolds number increases. 

\vskip-12pt
\begin{figure}[H]
    \centering
        \subfigure[{\small $Re=1$}, {\small $Rm = 1$}]{ \includegraphics[width=0.19\textwidth]{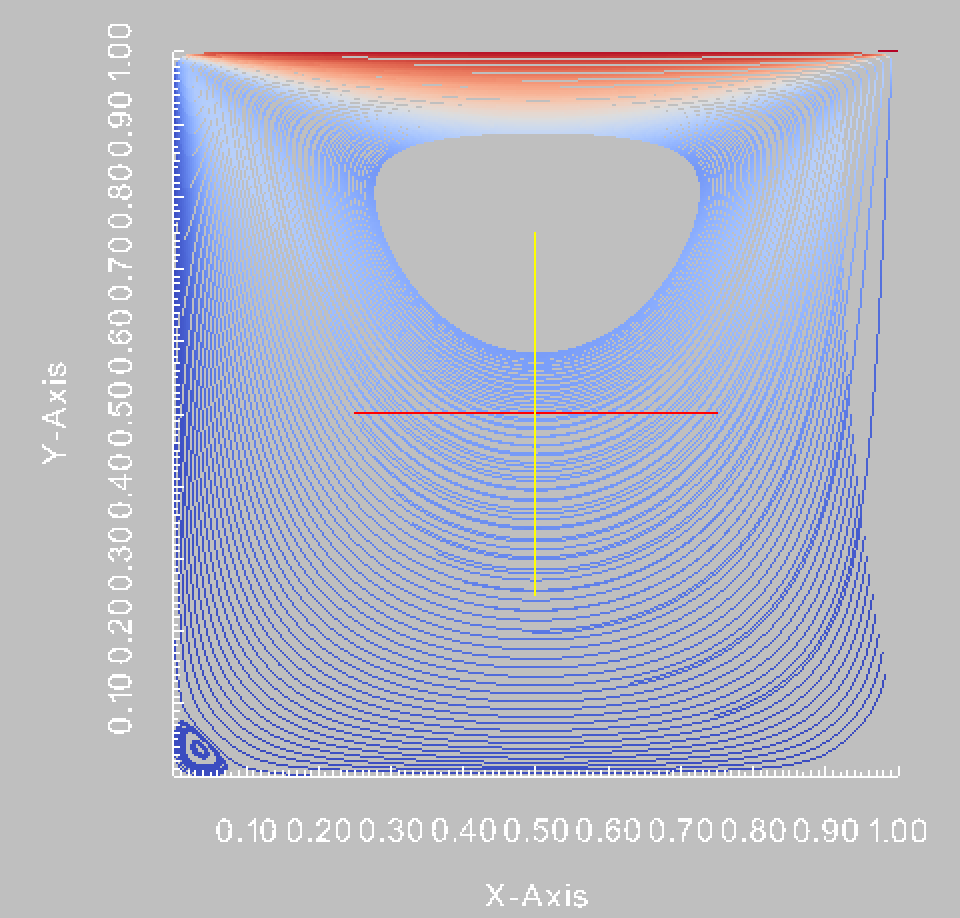} }
        \quad
        \subfigure[{\small $Re = 1$}, {\small $Rm = 400$}]{ \includegraphics[width=0.182\textwidth]{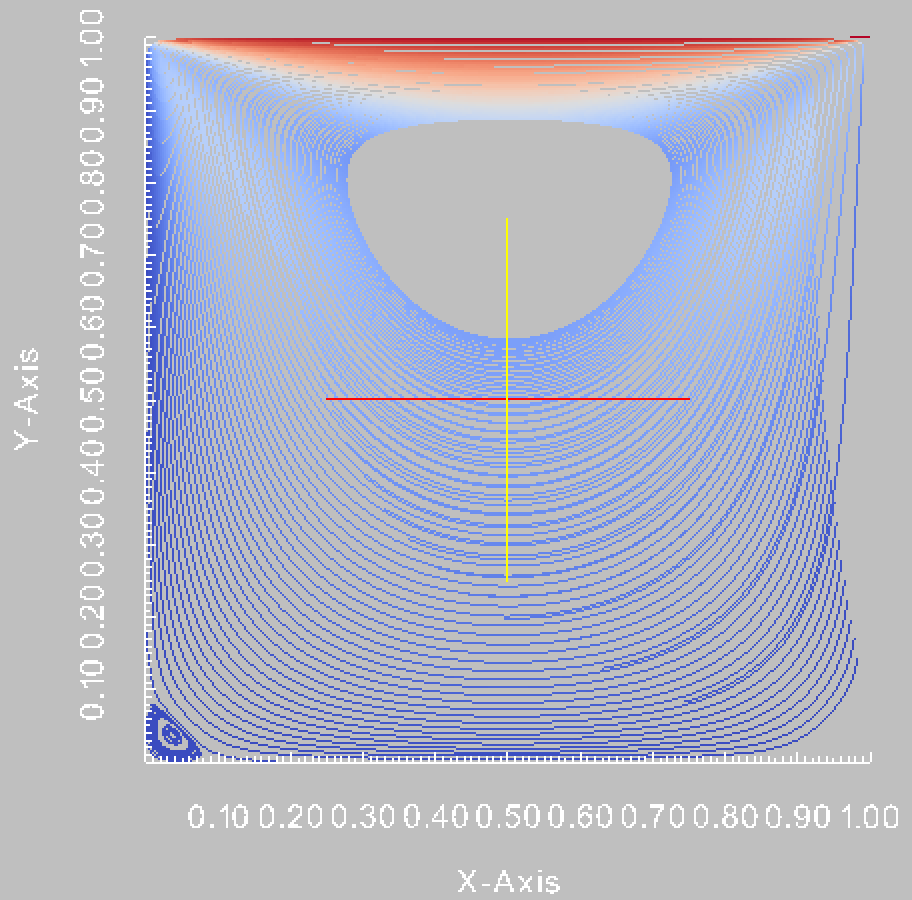} }
        \quad
        \subfigure[{\small $Re = 400$}, {\small $Rm = 1$}]{ \includegraphics[width=0.185\textwidth]{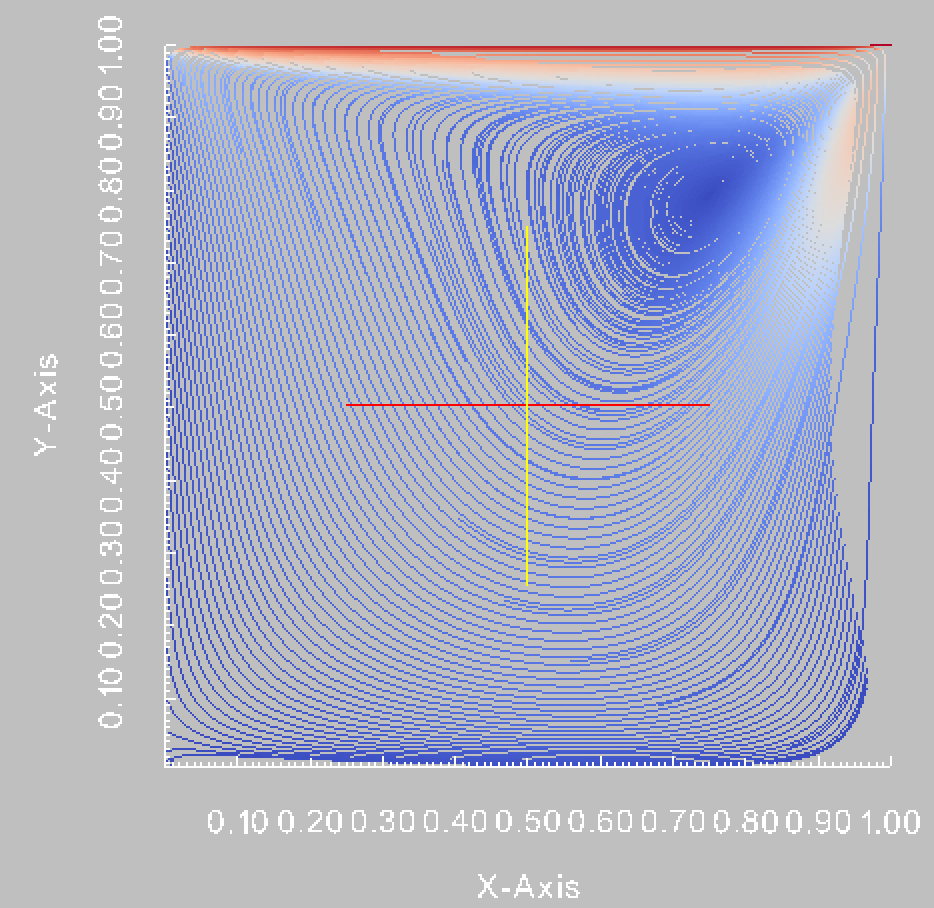} }
        \quad
        \subfigure[{\small $Re = 400$}, {\small $Rm = 400$}]{ \includegraphics[width=0.18\textwidth]{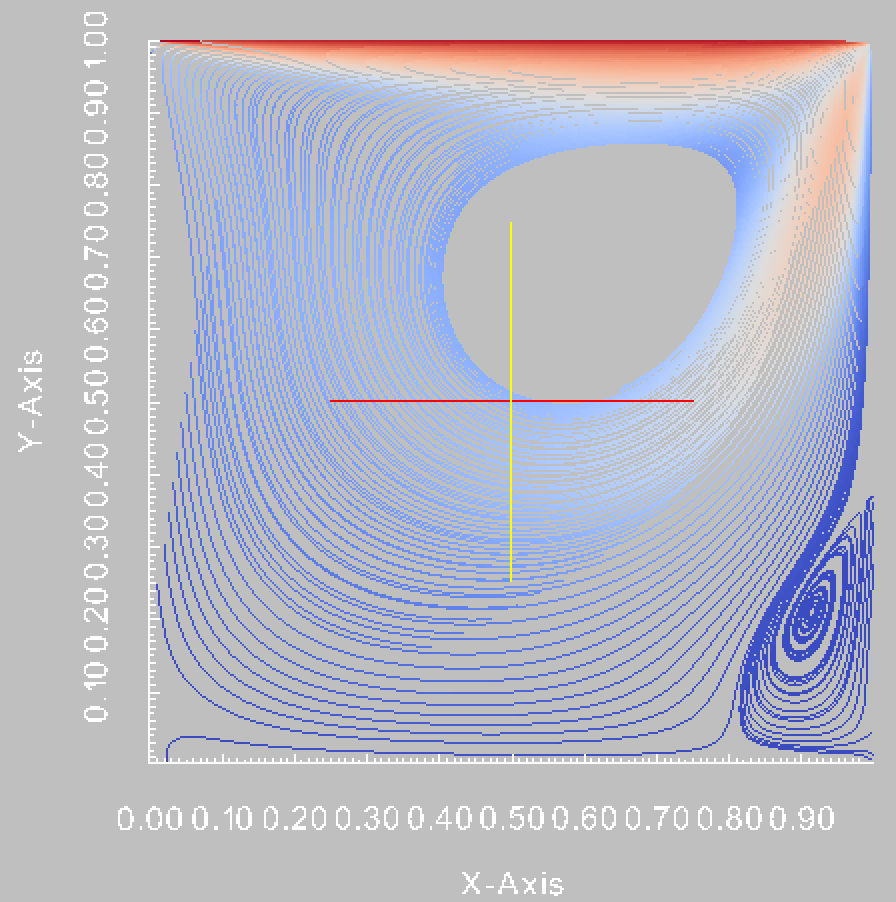} }
        \caption{Stream lines of velocity field. } \label{fig:lidcavity_vstream}
\end{figure}

In Figure \ref{fig:lidcavity_b}, we plot the distribution of the total magnetic fields for different {\small $Re$} and {\small $Rm$}. When {\small $Rm$} is relatively small, the total magnetic field is almost the same as the constant background field, while when {\small $Rm$} is relatively large, the distribution of total magnetic field is attached to the motion of the fluid. 

\vskip-12pt
\begin{figure}[H]
    \centering
        \subfigure[{\small $Re = 1$}, {\small $Rm = 1$}]{ \includegraphics[width=0.18\textwidth]{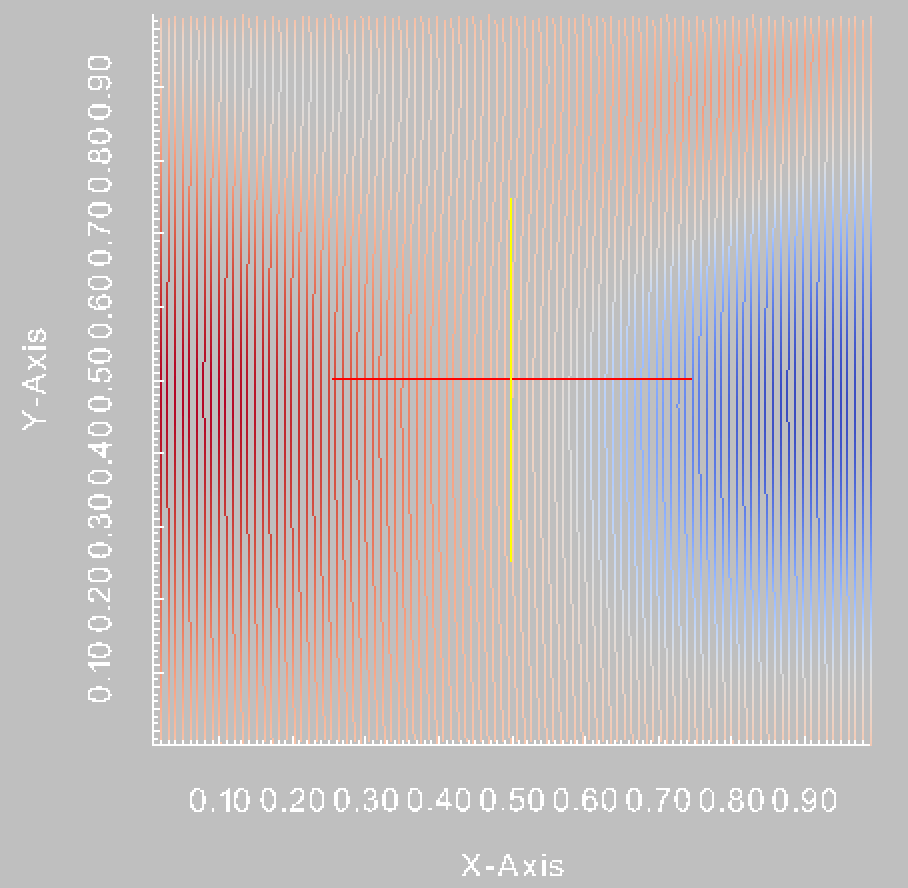} }
        \quad
        \subfigure[{\small $Re = 1$}, {\small $Rm = 400$}]{ \includegraphics[width=0.175\textwidth]{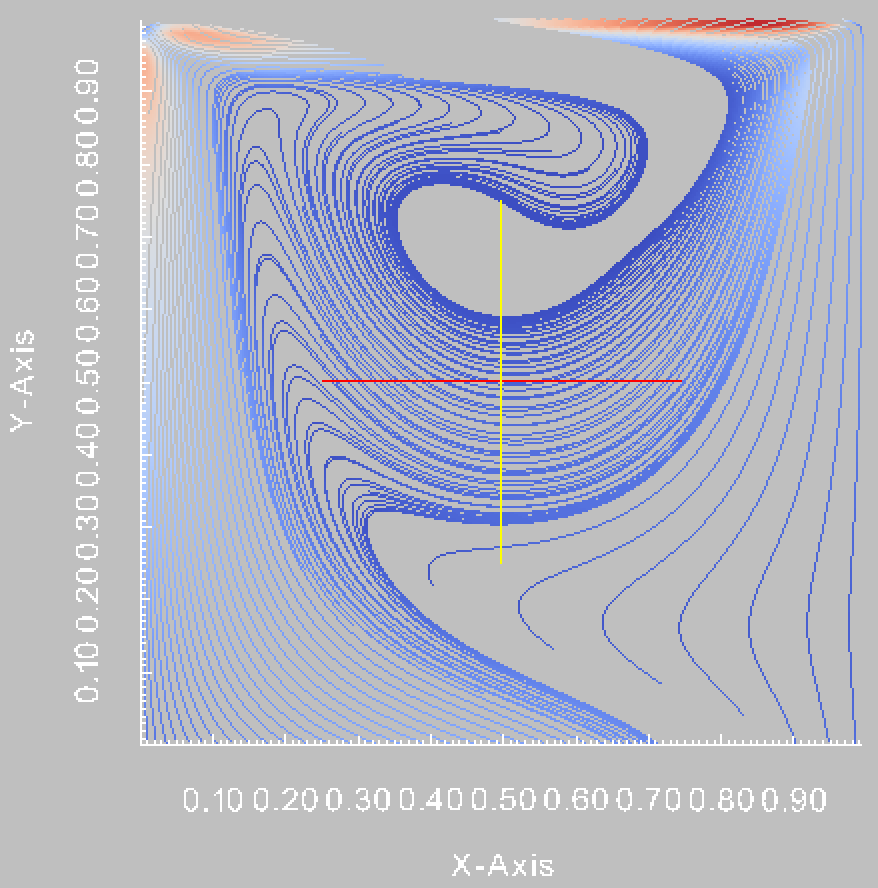} }
        \quad
        \subfigure[{\small $Re = 400$}, {\small $Rm = 1$}]{ \includegraphics[width=0.175\textwidth]{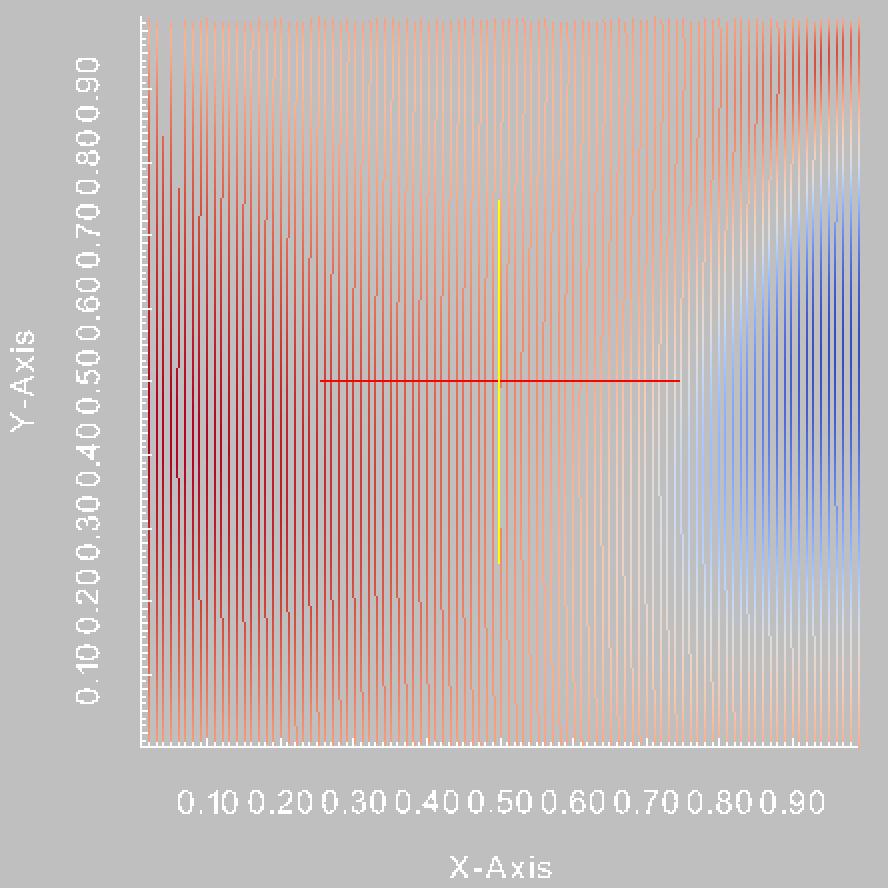} }
        \quad
        \subfigure[{\small $Re = 400$}, {\small $Rm = 400$}]{ \includegraphics[width=0.178\textwidth]{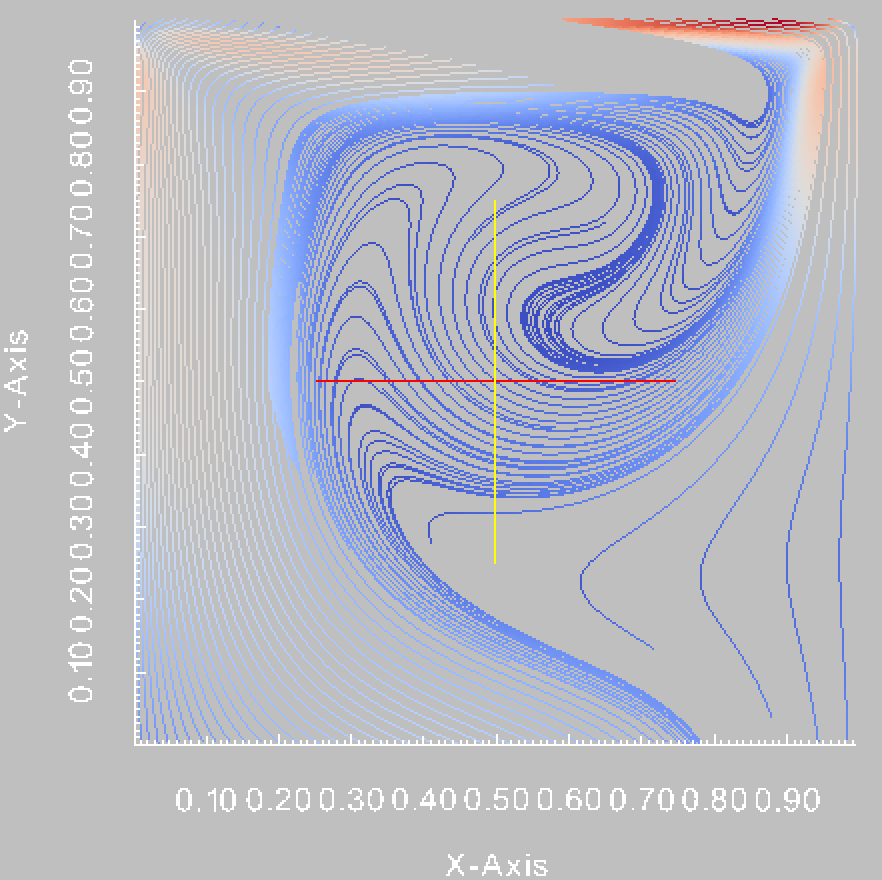} }
        \caption{Distribution of total magnetic field.} \label{fig:lidcavity_b}
\end{figure}

Next we demonstrate the performance of the proposed preconditioners {\small $\mathcal{D}$} \eqref{eqn:B-no-stable} and {\small $\mathcal{M}_{\mathcal{L}}$} \eqref{eqn:L-no-stable} in Table \ref{tab:2Dliddriven_picard}.  Note that, these two preconditioners requires to solve each diagonal block exactly.  The numbers in the table are the number of iterations for preconditioned Krylov methods.

%\vskip-10pt
\begin{scriptsize}
\begin{table}[H]
\centering 
	\begin{tabular}{l | c c c}
	\hline\hline
	& $\mathcal{D}$ (MINRES) & $\mathcal{D}$ (GMRES) & $\mathcal{M}_{\mathcal{L}}$ (GMRES)  \\ [1ex] 
	\hline 
	$Re=1$, $Rm=1$  & 32 & 27 & 7 \\ [1ex]
	% Entering 2nd row
	$Re=1$, $Rm=400$  & 33 & 26 & 6 \\ [1ex]
	% Entering 3rd row
	$Re=400$, $Rm=1$  & 25 & 21 & 5 \\ [1ex]
	% Entering 4th row
	$Re=400$, $Rm=400$  & 27 & 23 & 5 \\ [1ex]
	% [1ex] adds vertical space
	\hline % inserts single-line
	\end{tabular}
	\caption{Performance of solver (2D): $s = 1$} \label{tab:2Dliddriven_picard}
\end{table}
\end{scriptsize}

Besides the performance of solver, we also concern about the value of {\small $k_{0}$}, defined by \eqref{eq:def_k0}. So we compute the values of {\small $k_{0}$} for each time step and nonlinear step, and list the minimum value of {\small $k_{0}$} of all the iterations in table \ref{tab:2Dk0}. Based on the values of {\small $k_{0}$}, we can conclude that the refinement of the mesh does not influence the restriction on time step size significantly.

%\vskip-15pt
\begin{scriptsize}
\begin{table}[H]
\centering 
	\begin{tabular}{c c c c c}
	\hline\hline
	& $Re=1$, $Rm=1$ & $Re=1$, $Rm=400$ & $Re=400$, $Rm=1$ & $Re=400$, $Rm=400$ \\ [1ex] 
	\hline 
	h = 50 & 0.12 & 0.00084 & 0.12 & 0.0017 \\ [1ex]
	h = 100 & 0.12 & 0.00065 & 0.12 & 0.0012 \\ [1ex]
	h = 200 & 0.12 & 0.00057 & 0.12 & 0.00098 \\ [1ex]
	% [1ex] adds vertical space
	\hline % inserts single-line
	\end{tabular}
	\caption{Restriction on time step size: values of $k_{0}$} \label{tab:2Dk0}
\end{table}
\end{scriptsize}
\vskip-15pt
In order to further investigate the robustness of the proposed preconditioners with respect to the time step size {\small $k$} and the mesh size {\small $h$}, we list the number of iterations of the Krylov methods with different block preconditioners in Table \ref{tab:2Dexact_diagMINRES}, \ref{tab:2Dexact_diagGMRES}, \ref{tab:2Dexact_lower}, \ref{tab:2Dinexact_diag} and \ref{tab:2Dinexact_lower}. The tolerance of the preconditioned Krylov methods is $10^{-6}$. When the diagonal blocks are solved approximately by the PCG method, the preconditioners are actually changing in our implementation. Therefore, we use FGMRES method in this case to improve the robustness. Based on the results shown in the tables, we can conclude that our block preconditioners are effective and robust. 

\vskip-15pt
\begin{tiny}
\begin{table}[H]
\centering
\subtable[$Re = 1$, $Rm =1$]{
		\begin{tabular}{l | c c c}
		\hline\hline
		 \backslashbox{$\Delta t$}{$h$} & $\nicefrac{1}{32}$ & $\nicefrac{1}{64}$ & $\nicefrac{1}{128}$ 
		  \\ [0.5ex] 
		\hline 
		$0.02$ & 23 & 23 & 22 \\
		% Entering 2nd row
		$0.01$ & 24 & 24 & 22 \\
		% Entering 3rd row
		$0.005$ & 24 & 23 & 22 \\
		% Entering 4th row
		$0.0025$ & 23 & 23 & 22 \\
		% [1ex] adds vertical space
		\hline % inserts single-line
		\end{tabular}
	}
	\subtable[$Re = 1$, $Rm =400$]{
		\begin{tabular}{l | c c c}
		\hline\hline
		 \backslashbox{$\Delta t$}{$h$} & $\nicefrac{1}{32}$ & $\nicefrac{1}{64}$ & $\nicefrac{1}{128}$
		  \\ [0.5ex] 
		\hline 
		$0.02$ & 19 & 23 & 23 \\
		% Entering 2nd row
		$0.01$ & 17 & 19 & 21 \\
		% Entering 3rd row
		$0.005$ & 16 & 17 & 19 \\
		% Entering 4th row
		$0.0025$ & 14 & 16 & 17 \\
		% [1ex] adds vertical space
		\hline % inserts single-line
		\end{tabular}
	}
	\subtable[$Re = 400$, $Rm =1$]{
		\begin{tabular}{l | c c c}
		\hline\hline
		 \backslashbox{$\Delta t$}{$h$} & $\nicefrac{1}{32}$ & $\nicefrac{1}{64}$ & $\nicefrac{1}{128}$
		  \\ [0.5ex] 
		\hline 
		$0.02$ & 15 & 15 & 15 \\
		% Entering 2nd row
		$0.01$ & 13 & 15 & 15 \\
		% Entering 3rd row
		$0.005$ & 13 & 13 & 15 \\
		% Entering 4th row
		$0.0025$ & 13 & 13 & 13 \\
		% [1ex] adds vertical space
		\hline % inserts single-line
		\end{tabular}
	}
	\subtable[$Re = 400$, $Rm =400$]{
		\begin{tabular}{l | c c c}
		\hline\hline
		 \backslashbox{$\Delta t$}{$h$} & $\nicefrac{1}{32}$ & $\nicefrac{1}{64}$ & $\nicefrac{1}{128}$
		 \\ [0.5ex] 
		\hline 
		$0.02$ & 14 & 16 & 17 \\
		% Entering 2nd row
		$0.01$ & 10 & 14 & 16 \\
		% Entering 3rd row
		$0.005$ & 10 & 12 & 15 \\
		% Entering 4th row
		$0.0025$ & 9 & 10 & 13 \\
		% [1ex] adds vertical space
		\hline % inserts single-line
		\end{tabular}
	}
	\caption{Block diagonal preconditioner {\small $\mathcal{D}$} \eqref{eqn:B-no-stable} for MINRES method (diagonal blocks are solved exactly)}
	\label{tab:2Dexact_diagMINRES}
\end{table}
\end{tiny}

\vskip-25pt
\begin{tiny}
\begin{table}[H]
\centering
\subtable[$Re = 1$, $Rm =1$]{
		\begin{tabular}{l | c c c}
		\hline\hline
		 \backslashbox{$\Delta t$}{$h$} & $\nicefrac{1}{32}$ & $\nicefrac{1}{64}$ & $\nicefrac{1}{128}$ 
		  \\ [0.5ex] 
		\hline 
		$0.02$ & 20 & 18 & 17 \\
		% Entering 2nd row
		$0.01$ & 19 & 18 & 18 \\
		% Entering 3rd row
		$0.005$ & 21 & 20 & 18 \\
		% Entering 4th row
		$0.0025$ & 20 & 20 & 18 \\
		% [1ex] adds vertical space
		\hline % inserts single-line
		\end{tabular}
	}
	\subtable[$Re = 1$, $Rm =400$]{
		\begin{tabular}{l | c c c}
		\hline\hline
		 \backslashbox{$\Delta t$}{$h$} & $\nicefrac{1}{32}$ & $\nicefrac{1}{64}$ & $\nicefrac{1}{128}$
		  \\ [0.5ex] 
		\hline 
		$0.02$ & 16 & 17 & 15 \\
		% Entering 2nd row
		$0.01$ & 14 & 15 & 15 \\
		% Entering 3rd row
		$0.005$ & 14 & 14 & 15 \\
		% Entering 4th row
		$0.0025$ & 12 & 14 & 13 \\
		% [1ex] adds vertical space
		\hline % inserts single-line
		\end{tabular}
	}
	\subtable[$Re = 400$, $Rm =1$]{
		\begin{tabular}{l | c c c}
		\hline\hline
		 \backslashbox{$\Delta t$}{$h$} & $\nicefrac{1}{32}$ & $\nicefrac{1}{64}$ & $\nicefrac{1}{128}$
		  \\ [0.5ex] 
		\hline 
		$0.02$ & 13 & 11 & 11 \\
		% Entering 2nd row
		$0.01$ & 11 & 13 & 12 \\
		% Entering 3rd row
		$0.005$ & 9 & 12 & 12 \\
		% Entering 4th row
		$0.0025$ & 10 & 9 & 10 \\
		% [1ex] adds vertical space
		\hline % inserts single-line
		\end{tabular}
	}
	\subtable[$Re = 400$, $Rm =400$]{
		\begin{tabular}{l | c c c}
		\hline\hline
		 \backslashbox{$\Delta t$}{$h$} & $\nicefrac{1}{32}$ & $\nicefrac{1}{64}$ & $\nicefrac{1}{128}$
		 \\ [0.5ex] 
		\hline 
		$0.02$ & 8 & 11 & 9 \\
		% Entering 2nd row
		$0.01$ & 7 & 7 & 7 \\
		% Entering 3rd row
		$0.005$ & 6 & 7 & 7 \\
		% Entering 4th row
		$0.0025$ & 8 & 7 & 7 \\
		% [1ex] adds vertical space
		\hline % inserts single-line
		\end{tabular}
	}
	\caption{Block diagonal preconditioner {\small $\mathcal{D}$} \eqref{eqn:B-no-stable} for FGMRES method (diagonal blocks are solved exactly)}
	\label{tab:2Dexact_diagGMRES}
\end{table}
\end{tiny}

\vskip-25pt
\begin{tiny}
\begin{table}[H]
\centering
\subtable[$Re = 1$, $Rm = 1$]{
		\begin{tabular}{l | c c c}
		\hline\hline
		 \backslashbox{$\Delta t$}{$h$} & $\nicefrac{1}{32}$ & $\nicefrac{1}{64}$ & $\nicefrac{1}{128}$ 
		  \\ [0.5ex] 
		\hline 
		$0.02$ & 6 & 5 & 5 \\
		% Entering 2nd row
		$0.01$ & 5 & 5 & 5 \\
		% Entering 3rd row
		$0.005$ & 5 & 5 & 5 \\
		% Entering 4th row
		$0.0025$ & 5 & 5 & 5\\
		% [1ex] adds vertical space
		\hline % inserts single-line
		\end{tabular}
	}
	\subtable[$Re = 1$, $Rm = 400$]{
		\begin{tabular}{l | c c c}
		\hline\hline
		 \backslashbox{$\Delta t$}{$h$} & $\nicefrac{1}{32}$ & $\nicefrac{1}{64}$ & $\nicefrac{1}{128}$
		  \\ [0.5ex] 
		\hline 
		$0.02$ & 5 & 5 & 5 \\
		% Entering 2nd row
		$0.01$ & 5 & 5 & 5 \\
		% Entering 3rd row
		$0.005$ & 5 & 5 & 5 \\
		% Entering 4th row
		$0.0025$ & 5 & 5 & 5 \\
		% [1ex] adds vertical space
		\hline % inserts single-line
		\end{tabular}
	}
	\subtable[$Re = 400$, $Rm = 1$]{
		\begin{tabular}{l | c c c}
		\hline\hline
		 \backslashbox{$\Delta t$}{$h$} & $\nicefrac{1}{32}$ & $\nicefrac{1}{64}$ & $\nicefrac{1}{128}$
		  \\ [0.5ex] 
		\hline 
		$0.02$ & 4 & 4 & 4 \\
		% Entering 2nd row
		$0.01$ & 3 & 4 & 4 \\
		% Entering 3rd row
		$0.005$ & 3 & 3 & 3 \\
		% Entering 4th row
		$0.0025$ & 4 & 3 & 3 \\
		% [1ex] adds vertical space
		\hline % inserts single-line
		\end{tabular}
	}
	\subtable[$Re = 400$, $Rm = 400$]{
		\begin{tabular}{l | c c c}
		\hline\hline
		 \backslashbox{$\Delta t$}{$h$} & $\nicefrac{1}{32}$ & $\nicefrac{1}{64}$ & $\nicefrac{1}{128}$
		  \\ [0.5ex] 
		\hline 
		$0.02$ & 3 & 3 & 3\\
		% Entering 2nd row
		$0.01$ & 3 & 3 & 3 \\
		% Entering 3rd row
		$0.005$ & 3 & 3 & 3 \\
		% Entering 4th row
		$0.0025$ & 4 & 3 & 3 \\
		% [1ex] adds vertical space
		\hline % inserts single-line
		\end{tabular}
	}
	\caption{Block lower triangular preconditioner {\small $\mathcal{M}_{\mathcal{L}}$} \eqref{eqn:L-no-stable} for FGMRES method (diagonal blocks are solved exactly)}
	\label{tab:2Dexact_lower}
\end{table}
\end{tiny}

\vskip-25pt
\begin{tiny}
\begin{table}[H]
\centering
\subtable[$Re = 1$, $Rm =1$]{
		\begin{tabular}{l | c c c}
		\hline\hline
		 \backslashbox{$\Delta t$}{$h$} & $\nicefrac{1}{32}$ & $\nicefrac{1}{64}$ & $\nicefrac{1}{128}$ 
		  \\ [0.5ex] 
		\hline 
		$0.02$ & 21 & 25 & 43 \\
		% Entering 2nd row
		$0.01$ & 22 & 29 & 51 \\
		% Entering 3rd row
		$0.005$ & 24 & 34 & 60 \\
		% Entering 4th row
		$0.0025$ & 24 & 38 & 70 \\
		% [1ex] adds vertical space
		\hline % inserts single-line
		\end{tabular}
	}
	\subtable[$Re = 1$, $Rm =400$]{
		\begin{tabular}{l | c c c}
		\hline\hline
		 \backslashbox{$\Delta t$}{$h$} & $\nicefrac{1}{32}$ & $\nicefrac{1}{64}$ & $\nicefrac{1}{128}$ 
		  \\ [0.5ex] 
		\hline 
		$0.02$ & 16 & 22 & 43 \\
		% Entering 2nd row
		$0.01$ & 16 & 24 & 49 \\
		% Entering 3rd row
		$0.005$ & 16 & 25 & 57 \\
		% Entering 4th row
		$0.0025$ & 15 & 27 & 60 \\
		% [1ex] adds vertical space
		\hline % inserts single-line
		\end{tabular}
	}
	\subtable[$Re = 400$, $Rm =1$]{
		\begin{tabular}{l | c c c}
		\hline\hline
		 \backslashbox{$\Delta t$}{$h$} & $\nicefrac{1}{32}$ & $\nicefrac{1}{64}$ & $\nicefrac{1}{128}$ 
		  \\ [0.5ex] 
		\hline 
		$0.02$ & 16 & 24 & 46 \\
		% Entering 2nd row
		$0.01$ & 16 & 26 & 47 \\
		% Entering 3rd row
		$0.005$ & 18 & 24 & 49 \\
		% Entering 4th row
		$0.0025$ & 20 & 25 & 49 \\
		% [1ex] adds vertical space
		\hline % inserts single-line
		\end{tabular}
	}
	\subtable[$Re = 400$, $Rm =400$]{
		\begin{tabular}{l | c c c}
		\hline\hline
		 \backslashbox{$\Delta t$}{$h$} & $\nicefrac{1}{32}$ & $\nicefrac{1}{64}$ & $\nicefrac{1}{128}$ 
		 \\ [0.5ex] 
		\hline 
		$0.02$ & 13 & 23 & 46 \\
		% Entering 2nd row
		$0.01$ & 12 & 21 & 47 \\
		% Entering 3rd row
		$0.005$ & 12 & 20 & 47 \\
		% Entering 4th row
		$0.0025$ & 14 & 21 & 47 \\
		% [1ex] adds vertical space
		\hline % inserts single-line
		\end{tabular}
	}
	\caption{Block diagonal preconditioner {\small $\mathcal{M}$} \eqref{eqn:M-no-stable} for FGMRES method (diagonal blocks are solved approximately)}
	\label{tab:2Dinexact_diag}
\end{table}
\end{tiny}

\vskip-25pt
\begin{tiny}
\begin{table}[H]
\centering
\subtable[$Re = 1$, $Rm =1$]{
		\begin{tabular}{l | c c c}
		\hline\hline
		 \backslashbox{$\Delta t$}{$h$} & $\nicefrac{1}{32}$ & $\nicefrac{1}{64}$ & $\nicefrac{1}{128}$
		  \\ [0.5ex] 
		\hline 
		$0.02$ & 6 & 9 & 15 \\
		% Entering 2nd row
		$0.01$ & 6 & 9 & 17 \\
		% Entering 3rd row
		$0.005$ & 6 & 10 & 20 \\
		% Entering 4th row
		$0.0025$ & 6 & 11 & 22 \\
		% [1ex] adds vertical space
		\hline % inserts single-line
		\end{tabular}
	}
	\subtable[$Re = 1$, $Rm =400$]{
		\begin{tabular}{l | c c c}
		\hline\hline
		 \backslashbox{$\Delta t$}{$h$} & $\nicefrac{1}{32}$ & $\nicefrac{1}{64}$ & $\nicefrac{1}{128}$		  		\\ [0.5ex] 
		\hline 
		$0.02$ & 6 & 9 & 15 \\
		% Entering 2nd row
		$0.01$ & 6 & 9 & 17 \\
		% Entering 3rd row
		$0.005$ & 6 & 10 & 20 \\
		% Entering 4th row
		$0.0025$ & 6 & 11 & 22 \\
		% [1ex] adds vertical space
		\hline % inserts single-line
		\end{tabular}
	}
	\subtable[$Re = 400$, $Rm =1$]{
		\begin{tabular}{l | c c c}
		\hline\hline
		 \backslashbox{$\Delta t$}{$h$} & $\nicefrac{1}{32}$ & $\nicefrac{1}{64}$ & $\nicefrac{1}{128}$ 
		  \\ [0.5ex] 
		\hline 
		$0.02$ & 6 & 9 & 16 \\
		% Entering 2nd row
		$0.01$ & 6 & 10 & 17 \\
		% Entering 3rd row
		$0.005$ & 6 & 10 & 17 \\
		% Entering 4th row
		$0.0025$ & 7 & 11 & 18 \\
		% [1ex] adds vertical space
		\hline % inserts single-line
		\end{tabular}
	}
	\subtable[$Re = 400$, $Rm =400$]{
		\begin{tabular}{l | c c c}
		\hline\hline
		 \backslashbox{$\Delta t$}{$h$} & $\nicefrac{1}{32}$ & $\nicefrac{1}{64}$ & $\nicefrac{1}{128}$
		 \\ [0.5ex] 
		\hline 
		$0.02$ & 6 & 9 & 16 \\
		% Entering 2nd row
		$0.01$ & 6 & 10 & 17 \\
		% Entering 3rd row
		$0.005$ & 6 & 10 & 17 \\
		% Entering 4th row
		$0.0025$ & 7 & 10 & 18 \\
		% [1ex] adds vertical space
		\hline % inserts single-line
		\end{tabular}
	}
	\caption{Block lower triangular preconditioner {\small $\widehat{\mathcal{M}}_{\mathcal{L}}$} \eqref{eqn:barL-no-stable} for FGMRES method (diagonal blocks are solved approximately)} 
	\label{tab:2Dinexact_lower}
\end{table}
\end{tiny}

\vskip-20pt
\subsection{3D lid-driven cavity}
In $3$D case, the fluid movement in the cavity is induced by the imposed boundary condition {\small $\boldsymbol{u} = \boldsymbol{u}_{0} = \left( 1, 0, 0 \right)^{T}$} on the plane {\small $y = 1$}. The constant background magnetic field is {\small $\boldsymbol{B}_{0} = \left( 0, 1, 0 \right)^{T}$}. 

%see Figure \ref{fig:geo_liddriven3D}. 

% draw picture of lid-driven cavity here.
\begin{center}
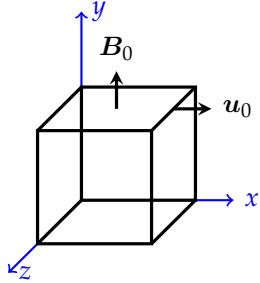

\begin{tikzpicture}
	[cube/.style = {very thick, black}, 
		axis/.style = {->, blue, thick},
		vector/.style = {-stealth, black, very thick}]
% draw the axes
\draw[axis] (0, 0, 0) -- (2, 0, 0) node[anchor=west]{{\small $x$}};
\draw[axis] (0, 0, 0) -- (0, 2.5, 0) node[anchor=west]{{\small $y$}};
\draw[axis] (0, 0, 0) -- (0, 0, 2.5) node[anchor=west]{{\small $z$}};

% draw the cube
\begin{scope}[shift = {(0, 0, 0)}]
	% draw the top & bottom
	\draw[cube] (0, 0, 0) -- (0, 1.5, 0) -- (1.5, 1.5, 0) -- (1.5, 0, 0) -- cycle;
	\draw[cube] (0, 0, 1.5) -- (0, 1.5, 1.5) -- (1.5, 1.5, 1.5) -- (1.5, 0, 1.5) -- cycle;
	
	% draw the edges of the cube
	\draw[cube] (0, 0, 0) -- (0, 0, 1.5);
	\draw[cube] (0,1.5, 0) -- (0, 1.5, 1.5);
	\draw[cube] (1.5, 0, 0) -- (1.5, 0, 1.5);
	\draw[cube] (1.5, 1.5, 0) -- (1.5, 1.5, 1.5);
\end{scope}

% draw B0
\coordinate (A) at (0.75, 1.5, 0.75);
\coordinate (B) at (0.75, 2, 0.75);
\draw[vector] (A) -- (B) node[above] {{\small $\boldsymbol{B}_{0}$}}; 

% draw u0
\coordinate (C) at (1.5, 1.5, 0.75);
\coordinate (D) at (2, 1.5, 0.75);
\draw[vector] (C) -- (D) node[right] {{\small $\boldsymbol{u}_{0}$}};
\end{tikzpicture}
\captionof{figure}{Geometry of lid driven cavity}\label{fig:geo_liddriven3D}
\end{center}

We also use symmetric Picard iteration to solve this $3$D benchmark problem. Since the motion of the fluids is uniform along the $z$-axis, the sectional view along $z$-axis of the motion of the fluids and the distribution of the total magnetic field are similar to those of the $2$D benchmark. Therefore, we only present the performance of our preconditioners in the $3$D benchmark. Parameters for the $3$D case are:
\begin{itemize}
\item Time step size {\small $\Delta t = 0.01$}. Time interval {\small $[0, 1.0]$}.
\item Uniform grid with mesh size {\small $h = \nicefrac{1}{12}$}.  Number of elements is $10368$.
\item The tolerance of the preconditioned Krylov methods is {\small $10^{-8}$} and the tolerance of nonlinear iteration is {\small $10^{-6}$}.
\item {\small $\mu_r = 1$}, {\small $\sigma_r = 1$}, {\small $s=1$}.
\end{itemize}
For $3$D, we use coarser mesh than the $2$D tests (but compatible problem size) due to the limited memory capacity of our workstation. We list the number of iterations of the Krylov solvers with preconditioners {\small $\mathcal{D}$} \eqref{eqn:B-no-stable} and {\small $\mathcal{M}_{\mathcal{L}}$} \eqref{eqn:L-no-stable} in Table \ref{tab:3Dliddriven_picard}.  Note that, we need to invert each diagonal block exactly for these preconditioners.

\vskip-8pt
\begin{scriptsize}
\begin{table}[H]
\centering 
	\begin{tabular}{l | c c}
	\hline\hline
	 & $\mathcal{D}$ (FGMRES) & $\mathcal{M}_{\mathcal{L}}$ (GMRES)  \\ [1ex] 
	\hline 
	$Re=1$, $Rm=1$ & 160 & 36 \\ [1ex]
	% Entering 2nd row
	$Re=1$, $Rm=400$ & 86 & 35 \\ [1ex]
	% Entering 3rd row
	$Re=400$, $Rm=1$ & 54 & 16 \\ [1ex]
	% Entering 4th row
	$Re=400$, $Rm=400$ & 34 & 16 \\ [1ex]
	% [1ex] adds vertical space
	\hline % inserts single-line
	\end{tabular}
	\caption{Performance of solver (3D): $s = 1$} \label{tab:3Dliddriven_picard}
\end{table}
\end{scriptsize}
\vskip-10pt
In order to further demonstrate the robustness of the preconditioner with respect to the time step size {\small $k$} and the mesh size {\small $h$}, we list the number of iterations of the Krylov methods with different block preconditioners in Table \ref{tab:3Dexact_diagMINRES}, \ref{tab:3Dexact_diagGMRES}, \ref{tab:3Dexact_lower}, \ref{tab:3Dinexact_diag}, and \ref{tab:3Dinexact_lower}. The tolerance of the preconditioned Krylov methods is {\small $10^{-6}$}.  {\small $\ast$} in the tables means that our workstation is out of memory during the test.  This usually happens when we call direct solvers.  In $3$D, diagonal blocks get bigger when the mesh size decreases, so we cannot call direct solvers due to the limitation of our workstation. When solving each diagonal block approximately, we set the tolerance of PCG {\small $10^{-3}$}. Based on the test results, we can conclude that our block preconditioners are effective and robust. 
\vskip-13pt
\begin{tiny}
\begin{table}[H]
\centering 
	\subtable[$Re = 1$, $Rm = 1$]{
		\begin{tabular}{l | c c c}
		\hline\hline
		\backslashbox{$\Delta t$}{$h$} & $\nicefrac{1}{4}$ & $\nicefrac{1}{8}$ & $\nicefrac{1}{16}$ \\ [0.5ex] 
		\hline 
		$0.02$ & 65 & 79 & $\ast$ \\
		% Entering 2nd row
		$0.01$ & 59 & 72 & $\ast$ \\
		% Entering 3rd row
		$0.005$ & 54 & 67 & $\ast$ \\
		% Entering 4th row
		$0.0025$ & 42 & 61 & $\ast$ \\
		% [1ex] adds vertical space
		\hline % inserts single-line
		\end{tabular}
	}
	\subtable[$Re = 1$, $Rm = 400$]{
		\begin{tabular}{l | c c c}
		\hline\hline
		\backslashbox{$\Delta t$}{$h$} & $\nicefrac{1}{4}$ & $\nicefrac{1}{8}$ & $\nicefrac{1}{16}$ \\ [0.5ex] 
		\hline 
		$0.02$ & 44 & 56 & $\ast$ \\
		% Entering 2nd row
		$0.01$ & 38 & 47 & $\ast$ \\
		% Entering 3rd row
		$0.005$ & 34 & 44 & $\ast$ \\
		% Entering 4th row
		$0.0025$ & 28 & 40 & $\ast$ \\
		% [1ex] adds vertical space
		\hline % inserts single-line
		\end{tabular}
	}
	\subtable[$Re = 400$, $Rm = 1$]{
		\begin{tabular}{l | c c c}
		\hline\hline
		\backslashbox{$\Delta t$}{$h$} & $\nicefrac{1}{4}$ & $\nicefrac{1}{8}$ & $\nicefrac{1}{16}$ \\ [0.5ex] 
		\hline 
		$0.02$ & 45 & 29 & $\ast$ \\
		% Entering 2nd row
		$0.01$ & 48 & 32 & $\ast$ \\
		% Entering 3rd row
		$0.005$ & 49 & 36 & $\ast$ \\
		% Entering 4th row
		$0.0025$ & 49 & 40 & $\ast$ \\
		% [1ex] adds vertical space
		\hline % inserts single-line
		\end{tabular}
	}
	\subtable[$Re = 400$, $Rm = 400$]{
		\begin{tabular}{l | c c c}
		\hline\hline
		\backslashbox{$\Delta t$}{$h$} & $\nicefrac{1}{4}$ & $\nicefrac{1}{8}$ & $\nicefrac{1}{16}$ \\ [0.5ex] 
		\hline 
		$0.02$ & 32 & 21 & $\ast$ \\
		% Entering 2nd row
		$0.01$ & 31 & 23 & $\ast$ \\
		% Entering 3rd row
		$0.005$ & 32 & 23 & $\ast$ \\
		% Entering 4th row
		$0.0025$ & 32 & 25 & $\ast$ \\
		% [1ex] adds vertical space
		\hline % inserts single-line
		\end{tabular}
	}
	\caption{Block diagonal preconditioner {\small $\mathcal{D}$} \eqref{eqn:B-no-stable} for MINRES method (diagonal blocks are solved exactly)}
	\label{tab:3Dexact_diagMINRES}
\end{table}
\end{tiny}

\vskip-20pt
\begin{tiny}
\begin{table}[H]
\centering 
	\subtable[$Re = 1$, $Rm = 1$]{
		\begin{tabular}{l | c c c}
		\hline\hline
		\backslashbox{$\Delta t$}{$h$} & $\nicefrac{1}{4}$ & $\nicefrac{1}{8}$ & $\nicefrac{1}{16}$ \\ [0.5ex] 
		\hline 
		$0.02$ & 54 & 62 & $\ast$ \\
		% Entering 2nd row
		$0.01$ & 52 & 49 & $\ast$ \\
		% Entering 3rd row
		$0.005$ & 49 & 47 & $\ast$ \\
		% Entering 4th row
		$0.0025$ & 42 & 47 & $\ast$ \\
		% [1ex] adds vertical space
		\hline % inserts single-line
		\end{tabular}
	}
	\subtable[$Re = 1$, $Rm = 400$]{
		\begin{tabular}{l | c c c}
		\hline\hline
		\backslashbox{$\Delta t$}{$h$} & $\nicefrac{1}{4}$ & $\nicefrac{1}{8}$ & $\nicefrac{1}{16}$ \\ [0.5ex] 
		\hline 
		$0.02$ & 34 & 40 & $\ast$ \\
		% Entering 2nd row
		$0.01$ & 32 & 28 & $\ast$ \\
		% Entering 3rd row
		$0.005$ & 29 & 27 & $\ast$ \\
		% Entering 4th row
		$0.0025$ & 24 & 27 & $\ast$ \\
		% [1ex] adds vertical space
		\hline % inserts single-line
		\end{tabular}
	}
	\subtable[$Re = 400$, $Rm = 1$]{
		\begin{tabular}{l | c c c}
		\hline\hline
		\backslashbox{$\Delta t$}{$h$} & $\nicefrac{1}{4}$ & $\nicefrac{1}{8}$ & $\nicefrac{1}{16}$ \\ [0.5ex] 
		\hline 
		$0.02$ & 30 & 19 & $\ast$ \\
		% Entering 2nd row
		$0.01$ & 42 & 22 & $\ast$ \\
		% Entering 3rd row
		$0.005$ & 46 & 25 & $\ast$ \\
		% Entering 4th row
		$0.0025$ & 49 & 28 & $\ast$ \\
		% [1ex] adds vertical space
		\hline % inserts single-line
		\end{tabular}
	}
	\subtable[$Re = 400$, $Rm = 400$]{
		\begin{tabular}{l | c c c}
		\hline\hline
		\backslashbox{$\Delta t$}{$h$} & $\nicefrac{1}{4}$ & $\nicefrac{1}{8}$ & $\nicefrac{1}{16}$ \\ [0.5ex] 
		\hline 
		$0.02$ & 21 & 13 & $\ast$ \\
		% Entering 2nd row
		$0.01$ & 23 & 14 & $\ast$ \\
		% Entering 3rd row
		$0.005$ & 27 & 16 & $\ast$ \\
		% Entering 4th row
		$0.0025$ & 29 & 18 & $\ast$ \\
		% [1ex] adds vertical space
		\hline % inserts single-line
		\end{tabular}
	}
	\caption{Block diagonal preconditioner {\small $\mathcal{D}$} \eqref{eqn:B-no-stable} for FGMRES method (diagonal blocks are solved exactly)}
	\label{tab:3Dexact_diagGMRES}
\end{table}
\end{tiny}

\vskip-20pt
\begin{tiny}
\begin{table}[H]
\centering 
	\subtable[$Re = 1$, $Rm = 1$]{
		\begin{tabular}{l | c c c}
		\hline\hline
		\backslashbox{$\Delta t$}{$h$} & $\nicefrac{1}{4}$ & $\nicefrac{1}{8}$ & $\nicefrac{1}{16}$ \\ [0.5ex] 
		\hline 
		$0.02$ & 15 & 16 & $\ast$ \\
		% Entering 2nd row
		$0.01$ & 15 & 14 & $\ast$ \\
		% Entering 3rd row
		$0.005$ & 14 & 13 & $\ast$ \\
		% Entering 4th row
		$0.0025$ & 12 & 13 & $\ast$ \\
		% [1ex] adds vertical space
		\hline % inserts single-line
		\end{tabular}
	}
	\subtable[$Re = 1$, $Rm = 400$]{
		\begin{tabular}{l | c c c}
		\hline\hline
		\backslashbox{$\Delta t$}{$h$} & $\nicefrac{1}{4}$ & $\nicefrac{1}{8}$ & $\nicefrac{1}{16}$ \\ [0.5ex] 
		\hline 
		$0.02$ & 15 & 16 & $\ast$ \\
		% Entering 2nd row
		$0.01$ & 15 & 13 & $\ast$ \\
		% Entering 3rd row
		$0.005$ & 14 & 13 & $\ast$ \\
		% Entering 4th row
		$0.0025$ & 12 & 13 & $\ast$ \\
		% [1ex] adds vertical space
		\hline % inserts single-line
		\end{tabular}
	}
	\subtable[$Re = 400$, $Rm = 1$]{
		\begin{tabular}{l | c c c}
		\hline\hline
		\backslashbox{$\Delta t$}{$h$} & $\nicefrac{1}{4}$ & $\nicefrac{1}{8}$ & $\nicefrac{1}{16}$ \\ [0.5ex] 
		\hline 
		$0.02$ & 10 & 6 & $\ast$ \\
		% Entering 2nd row
		$0.01$ & 11 & 7 & $\ast$ \\
		% Entering 3rd row
		$0.005$ & 13 & 8 & $\ast$ \\
		% Entering 4th row
		$0.0025$ & 15 & 9 & $\ast$ \\
		% [1ex] adds vertical space
		\hline % inserts single-line
		\end{tabular}
	}
	\subtable[$Re = 400$, $Rm = 400$]{
		\begin{tabular}{l | c c c}
		\hline\hline
		\backslashbox{$\Delta t$}{$h$} & $\nicefrac{1}{4}$ & $\nicefrac{1}{8}$ & $\nicefrac{1}{16}$ \\ [0.5ex] 
		\hline 
		$0.02$ & 10 & 6 & $\ast$ \\
		% Entering 2nd row
		$0.01$ & 11 & 7 & $\ast$ \\
		% Entering 3rd row
		$0.005$ & 13 & 8 & $\ast$ \\
		% Entering 4th row
		$0.0025$ & 15 & 9 & $\ast$ \\
		% [1ex] adds vertical space
		\hline % inserts single-line
		\end{tabular}
	}
	\caption{Block lower triangular preconditioner {\small $\mathcal{M}_{\mathcal{L}}$} \eqref{eqn:L-no-stable} for FGMRES method (diagonal blocks are solved exactly)}
	\label{tab:3Dexact_lower}
\end{table}
\end{tiny}

\vskip-20pt
\begin{tiny}
\begin{table}[H]
\centering 
	\subtable[$Re = 1$, $Rm = 1$]{
		\begin{tabular}{l | c c c}
		\hline\hline
		\backslashbox{$\Delta t$}{$h$} & $\nicefrac{1}{4}$ & $\nicefrac{1}{8}$ & $\nicefrac{1}{16}$ \\ [0.5ex] 
		\hline 
		$0.02$ & 56 & 64 & 89 \\
		% Entering 2nd row
		$0.01$ & 54 & 52 & 74 \\
		% Entering 3rd row
		$0.005$ & 50 & 51 & 56 \\
		% Entering 4th row
		$0.0025$ & 41 & 50 & 49 \\
		% [1ex] adds vertical space
		\hline % inserts single-line
		\end{tabular}
	}
	\quad
	\subtable[$Re = 1$, $Rm = 400$]{
		\begin{tabular}{l | c c c}
		\hline\hline
		\backslashbox{$\Delta t$}{$h$} & $\nicefrac{1}{4}$ & $\nicefrac{1}{8}$ & $\nicefrac{1}{16}$ \\ [0.5ex] 
		\hline 
		$0.02$ & 36 & 42 & 70 \\
		% Entering 2nd row
		$0.01$ & 34 & 30 & 49 \\
		% Entering 3rd row
		$0.005$ & 29 & 29 & 38 \\
		% Entering 4th row
		$0.0025$ & 25 & 27 & 26 \\
		% [1ex] adds vertical space
		\hline % inserts single-line
		\end{tabular}
	}
	\quad
	\subtable[$Re = 400$, $Rm = 1$]{
		\begin{tabular}{l | c c c}
		\hline\hline
		\backslashbox{$\Delta t$}{$h$} & $\nicefrac{1}{4}$ & $\nicefrac{1}{8}$ & $\nicefrac{1}{16}$ \\ [0.5ex] 
		\hline 
		$0.02$ & 39 & 21 & 14 \\
		% Entering 2nd row
		$0.01$ & 43 & 24 & 17 \\
		% Entering 3rd row
		$0.005$ & 47 & 32 & 21 \\
		% Entering 4th row
		$0.0025$ & 49 & 38 & 25 \\
		% [1ex] adds vertical space
		\hline % inserts single-line
		\end{tabular}
	}
	\quad
	\subtable[$Re = 400$, $Rm = 400$]{
		\begin{tabular}{l | c c c}
		\hline\hline
		\backslashbox{$\Delta t$}{$h$} & $\nicefrac{1}{4}$ & $\nicefrac{1}{8}$ & $\nicefrac{1}{16}$ \\ [0.5ex] 
		\hline 
		$0.02$ & 24 & 14 & 10 \\
		% Entering 2nd row
		$0.01$ & 25 & 16 & 12 \\
		% Entering 3rd row
		$0.005$ & 27 & 19 & 14 \\
		% Entering 4th row
		$0.0025$ & 32 & 21 & 16 \\
		% [1ex] adds vertical space
		\hline % inserts single-line
		\end{tabular}
	}
	\caption{Block diagonal preconditioner {\small $\mathcal{M}$} \eqref{eqn:M-no-stable} for FGMRES method (diagonal blocks are solved approximately)}
	\label{tab:3Dinexact_diag}
\end{table}
\end{tiny}

\vskip-25pt
\begin{tiny}
\begin{table}[H]
\centering
\subtable[$Re = 1$, $Rm =1$]{
		\begin{tabular}{l | c c c}
		\hline\hline
		 \backslashbox{$\Delta t$}{$h$} & $\nicefrac{1}{4}$ & $\nicefrac{1}{8}$ & $\nicefrac{1}{16}$ \\ [0.5ex] 
		\hline 
		$0.02$ & 16 & 18 & 25 \\
		% Entering 2nd row
		$0.01$ & 16 & 15 & 21 \\
		% Entering 3rd row
		$0.005$ & 16 & 15 & 17 \\
		% Entering 4th row
		$0.0025$ & 13 & 15 & 14 \\
		% [1ex] adds vertical space
		\hline % inserts single-line
		\end{tabular}
	}
	\quad
	\subtable[$Re = 1$, $Rm =400$]{
		\begin{tabular}{l | c c c}
		\hline\hline
		 \backslashbox{$\Delta t$}{$h$} & $\nicefrac{1}{4}$ & $\nicefrac{1}{8}$ & $\nicefrac{1}{16}$ \\ [0.5ex] 
		\hline 
		$0.02$ & 16 & 18 & 24 \\
		% Entering 2nd row
		$0.01$ & 16 & 15 & 20 \\
		% Entering 3rd row
		$0.005$ & 16 & 15 & 17 \\
		% Entering 4th row
		$0.0025$ & 13 & 15 & 14 \\
		% [1ex] adds vertical space
		\hline % inserts single-line
		\end{tabular}
	}
	\quad
	\subtable[$Re = 400$, $Rm =1$]{
		\begin{tabular}{l | c c c}
		\hline\hline
		 \backslashbox{$\Delta t$}{$h$} & $\nicefrac{1}{4}$ & $\nicefrac{1}{8}$ & $\nicefrac{1}{16}$ \\ [0.5ex] 
		\hline 
		$0.02$ & 11 & 7 & 5 \\
		% Entering 2nd row
		$0.01$ & 13 & 8 & 6 \\
		% Entering 3rd row
		$0.005$ & 15 & 10 & 7 \\
		% Entering 4th row
		$0.0025$ & 16 & 11 & 8 \\
		% [1ex] adds vertical space
		\hline % inserts single-line
		\end{tabular}
	}
	\quad
	\subtable[$Re = 400$, $Rm =400$]{
		\begin{tabular}{l | c c c}
		\hline\hline
		 \backslashbox{$\Delta t$}{$h$} & $\nicefrac{1}{4}$ & $\nicefrac{1}{8}$ & $\nicefrac{1}{16}$ \\ [0.5ex] 
		\hline 
		$0.02$ & 11 & 7 & 5 \\
		% Entering 2nd row
		$0.01$ & 13 & 8 & 6 \\
		% Entering 3rd row
		$0.005$ & 15 & 9 & 7 \\
		% Entering 4th row
		$0.0025$ & 16 & 11 & 7 \\
		% [1ex] adds vertical space
		\hline % inserts single-line
		\end{tabular}
	}
	\caption{Block lower triangular preconditioner {\small $\widehat{\mathcal{M}}_{\mathcal{L}}$} \eqref{eqn:barL-no-stable} for FGMRES method (diagonal blocks are solved approximately)}
	\label{tab:3Dinexact_lower}
\end{table}
\end{tiny}

% conclusion.
\section{Conclusions}
In the numerical simulations for the incompressible MHD system, the most time-consuming part is solving the linear system.  In this paper, we design several new block preconditioners for the linear systems resulting from the structure-preserving finite element discretization proposed in \cite{Hu.K;Ma.Y;Xu.J.2014a}.  We develop these preconditioners from the perspective of the functional and PDE analysis.  By rigorously proving the well-posedness of the discretization schemes and carefully studying the mapping property of the linearized operators between appropriate Sobol\'{e}v spaces with proper weighted norms following the framework in \cite{Mardal.K;Winther.R.2011a,Loghin.D;Wathen.A.2004a}, we develop two types of preconditioners: norm-equivalent preconditioners (for example, block diagonal preconditioners) and FOV-equivalent preconditioners (for example, block triangular preconditioners). By revisiting the inf-sup conditions of the discrete systems under the new weighted norms, we theoretically verify the robustness of the preconditioners when the time step size is small enough, and further prove that the resulting preconditioned Krylov iterative methods (for example, MINRES and GMRES methods) converge uniformly with respect to the mesh size, time step size, and physical parameters including the relative electrical conductivity {\small $\sigma_{r}$}, the relative magnetic permeability {\small $\mu_{r}$}, the coupling number {\small $s$}, and the magnetic Reynolds number  {\small $Rm$}. We also verify the theoretical conclusions by numerical experiments.

Another contribution of this paper is that we improve the analysis of FOV-equivalent preconditioners proposed in \cite{Loghin.D;Wathen.A.2004a}.  Their analysis requires scaling parameters in front of the diagonal blocks in {\small $\widehat{\mathcal{M}}_{\mathcal{L}}$} under certain constrains, which usually are difficult to choose in practice. In our analysis, with the help of an appropriate norm $(\cdot, \cdot)_{ \mathcal{M}^{-1} }$, we are able to remove those unnecessary scaling parameters, which makes the theoretical results consistent with practical implementation and observations. 

Finally we would like to point out that we have not considered the convection dominant cases in either the design of discretization or the design of preconditioner.  For convection dominant cases (that could happen in \eqref{eq:dimensionless1} and or \eqref{eq:dimensionless3}), we need to use special discretization methods such as upwinding, streamline diffusion or other stabilization techniques and we also need to use tailored techniques for designing preconditioners for the resulting discrete systems.  This is a topic of our ongoing investigations.  We expect that for all practical purposes, preconditioners developed in this paper can be extended to certain convection dominant cases without much difficulty but we do not at all expect that it is easy to extend our theory to such cases as rigorous theoretical results that are uniform with respect to the magnitude of the convection are extremely rare for either continuous or discrete case in any circumstances.  Despite the limitation of our results for the convection dominant case, we trust that the rigorous theoretical results and the supporting numerical experiments presented in this paper will help shed some light on certain aspects of the highly complicated MHD systems.

\section*{Acknowledgements}
The authors would like to thank Lu Wang for a lot of useful discussions and suggestions.

\bibliographystyle{plain}
\bibliography{MHD}{}

\end{document}